\newcommand\numberthis{\addtocounter{equation}{1}\tag{\theequation}}
\newcommand{\G}{\mathcal{G}}
\newcommand{\R}{\mathbb{R}}
\newcommand{\D}{\mathscr{D}}
\renewcommand{\S}{\mathcal{S}}
\newtheorem{theorem}{Theorem}[section]
\newtheorem{lemma}[theorem]{Lemma}
\newtheorem{proposition}[theorem]{Proposition}
\let\tmp\alpha
\let\alpha\gamma
\let\gamma\tmp
\title{Sparse Maximum-Entropy Random Graphs\\ with a Given Power-Law Degree Distribution}
\author[1]{Pim van der Hoorn}
\author[2]{Gabor Lippner}
\author[1,2,3]{Dmitri Krioukov}
\affil[1]{Northeastern University, Department of Physics}
\affil[2]{Northeastern University, Department of Mathematics}
\affil[3]{Northeastern University, Departments of Electrical\&Computer Engineering}
\begin{document}

\maketitle

\begin{abstract}

Even though power-law or close-to-power-law degree distributions are ubiquitously observed in a great variety of large real networks, the mathematically satisfactory treatment of random power-law graphs satisfying basic statistical requirements of realism is still lacking. These requirements are: sparsity, exchangeability, projectivity, and unbiasedness. The last requirement states that entropy of the graph ensemble must be maximized under the degree distribution constraints. Here we prove that the hypersoft configuration model (HSCM), belonging to the class of random graphs with latent hyperparameters, also known as inhomogeneous random graphs or $W$-random graphs, is an ensemble of random power-law graphs that are sparse, unbiased, and either exchangeable or projective. The proof of their unbiasedness relies on generalized graphons, and on mapping the problem of maximization of the normalized Gibbs entropy of a random graph ensemble, to the graphon entropy maximization problem, showing that the two entropies converge to each other in the large-graph limit.\\
\textbf{Keywords:} Sparse random graphs, Power-law degree distributions, Maximum-entropy graphs\\
\textbf{PACS:} 89.75.Hc, 89.75.Fb, 89.70.Cf\\
\textbf{MSC:} 05C80, 05C82, 54C70\\
\end{abstract}

\newpage

\tableofcontents

\newpage

\section{Introduction}

Random graphs have been used extensively to model a variety of real networks. Many of these networks, ranging from the Internet and social networks to the brain and the universe, have broad degree distributions, often following closely power laws~\cite{boccaletti2006complex,newman2010networks,barabasi2016network}, that the simplest random graph model, the Erd\H{o}s-R\'enyi random graphs~\cite{solomonoff1951connectivity,gilbert1959random,erdos1959random} with Poisson degree distributions, does not reproduce. To resolve this disconnect, several alternative models have been proposed and studied. The first one is the configuration model (CM), random graphs with a given degree sequence~\cite{bender1978asymptotic,molloy1995critical}. This model is a microcanonical ensemble of random graphs. Every graph in the ensemble has the same fixed degree sequence, e.g., the one observed in a snapshot of a real network, and every such graph is equiprobable in the ensemble. The ensemble thus maximizes Gibbs entropy subject to the constraint that the degree sequence is fixed. Yet given a real network snapshot, one cannot usually trust its degree sequence as some ``ultimate truth'' for a variety of reasons, including measurement imperfections, inaccuracies, and incompleteness, noise and stochasticity, and most importantly, the fact that most real networks are dynamic both at short and long time scales, growing often by orders of magnitude over years~\cite{dhamdhere2011years,newman2001clustering,boccaletti2006complex,newman2010networks,barabasi2016network}.

These factors partly motivated the development of the soft configuration model (SCM), random graphs with a given expected degree sequence, first considered in~\cite{chung2002connected,chung2002average}, and later corrected in~\cite{park2004statistical,bianconi2008entropy,garlaschelli2008maximum,squartini2011analytical}, where it was shown that this correction yields a canonical ensemble of random graphs that maximize Gibbs entropy under the constraint that the expected degree sequence is fixed. In statistics, canonical ensembles of random graphs are known as exponential random graphs (ERGs)~\cite{holland1981exponential}. In~\cite{anand2009entropy,squartini2015breaking} it was shown that the sparse CM and SCM are not equivalent, but they are equivalent in the case of dense graphs~\cite{chatterjee2011random}. Yet the SCM still treats a given degree sequence as a fixed constraint, albeit not as a sharp but soft constraint. This constraint is in stark contrast with reality of many growing real networks, in which the degree of all nodes constantly change, yet the shape of the degree distribution and the average degree do not change, staying essentially constant in networks that grow in size even by orders of magnitude~\cite{dhamdhere2011years,newman2001clustering,boccaletti2006complex,newman2010networks,barabasi2016network}. These observations motivated the development of the hypersoft configuration model~\cite{caldarelli2002scale,boguna2003class,anand2014entropy,zuev2016hamiltonian}.

\subsection{Hypersoft configuration model (HSCM)}\label{sec:intro_hscm}

In the HSCM neither degrees nor even their expected values are fixed. Instead the fixed properties are the degree distribution and average degree. The HSCM with a given average degree and power-law degree distribution is defined by the exponential measure $\mu$ on the real line $\R$
\begin{align*}
\mu=e^{\gamma x},\quad x\in\R,
\end{align*}
where $\gamma>1$ is a constant, and by the Fermi-Dirac graphon $W:\R^2\to[0,1]$
\begin{equation}\label{eq:WFD}
W(x,y)=\frac{1}{e^{x+y}+1}.
\end{equation}
The volume-form measure $\mu$ establishes then probability measures
\begin{align*}
\mu_n = \frac{\mu|_{A_n}}{\mu(A_n)}= \gamma\,e^{\gamma(x-R_n)}
\end{align*}
on intervals $A_n=(-\infty,R_n]$, where
\begin{equation}\label{eq:intro_Rn}
  R_n = \frac{1}{2}\log\frac{n}{\beta^2\nu},\quad \beta=1-\frac{1}{\gamma},
\end{equation}
and $\nu>0$ is another constant. The constants $\gamma>1$ and $\nu>0$ are the only two parameters of the model. The HSCM random graphs of size $n$ are defined by $(W,A_n,\mu_n)$ via sampling $n$ i.i.d.\ points $x_1, \dots, x_n$ on $A_n$ according to measure $\mu_n$, and then connecting pairs of points $i$ and $j$ at sampled locations $x_i$ and $x_j$ by an edge with probability $W(x_i,x_j)$.

An alternative equivalent definition is obtained by mapping~$(W,A_n,\mu_n)$ to $(W_{I,n},I,\mu_I)$, where $I=[0,1]$, $\mu_I=1$, and
\begin{equation}\label{eq:W_n}
  W_{I,n}(x,y)=\frac{1}{\frac{n}{\beta^2\nu}\left(xy\right)^{\frac{1}{\gamma}}+1},\quad x,y\in I,
\end{equation}
In this definition, $x_i$s are $n$ i.i.d.\ random variables uniformly distributed on the unit interval $[0,1]$, and vertices $i$ and $j$ are connected with probability $W_{I,n}(x_i,x_j)$.

Yet another equivalent definition, perhaps the most familiar and most frequently used one, is given by $(W_{P,n},P,\mu_P)$, where interval $P=[\beta\nu,\infty)$, and measure $\mu_P$ on $P$ is the Pareto distribution
\begin{align*}
  \mu_P &= \gamma\left(\beta\nu\right)^\gamma x^{-\alpha},\quad x\in P,\\
  W_{P,n}(x,y) &= \frac{1}{\frac{\nu n}{xy}+1},\quad x,y\in P.
\end{align*}
In this Pareto representation, the expected degree of a vertex at coordinate~$x$ is proportional to~$x$~\cite{garlaschelli2008maximum,squartini2011analytical}.

Compared to the SCM where only edges are random variables while the expected degrees are fixed, the HSCM introduces another source of randomness---and hence entropy---coming from expected degrees that are also random variables. One obtains a particular realization of an SCM from the HSCM by sampling $x_i$s from their fixed distribution and then freezing them. Therefore the HSCM is a probabilistic mixture of canonical ensembles, SCM ERGs, so that one may call the HSCM a \emph{hypercanonical ensemble} given that latent variables $x$ in the HSCM are called \emph{hyperparameters} in statistics~\cite{evans2009probability}.

\subsection{Properties of the HSCM}

We prove in Theorem~\ref{thm:mixed_poisson_degrees_hscm} that the distribution of degrees $\D$ in the power-law HSCM ensemble defined above converges---henceforth \emph{convergence} always means the $n\to\infty$ limit, unless mentioned otherwise---to
\begin{equation}\label{eq:P(k)}
	\Prob{\D=k} = \gamma(\beta\nu)^\gamma\frac{\Gamma(k-\gamma,\beta\nu)}{k!}
	= \gamma(\beta\nu)^\gamma \frac{\Gamma(k - \gamma)}{\Gamma(k + 1)}\left(1 - P(k -\gamma, \beta \nu)\right),
\end{equation}
where $\Gamma(a,x)$ is the upper incomplete Gamma function, and $P(a,x)$ is the regularized lower incomplete Gamma function.
Since $P(a,x) \sim (ex/a)^a$ for $a \gg ex$, while $\Gamma(k - \gamma)/\Gamma(k + 1) \sim k^{-(\gamma + 1)}$ for $k \gg \gamma$, we get
\begin{equation}\label{eq:Pk_asymptotic}
	\Prob{\D=k} \sim \gamma(\beta\nu)^\gamma k^{-\alpha},\quad \alpha=\gamma+1.
\end{equation}
We also prove in Theorem~\ref{thm:average_degree_hscm} that the expected average degree in the ensemble converges to
\begin{equation}\label{eq:ED=nu}
  \Exp{\D}=\nu.
\end{equation}
That is, the degree distribution in the ensemble has a power tail with exponent $\alpha$, while the expected average degree is fixed to constant $\nu$ that does not depend on $n$, 
Fig.~\ref{fig:internet_vs_hscm}. 

\begin{figure}
	\centerline{
    \includegraphics[width=6in]{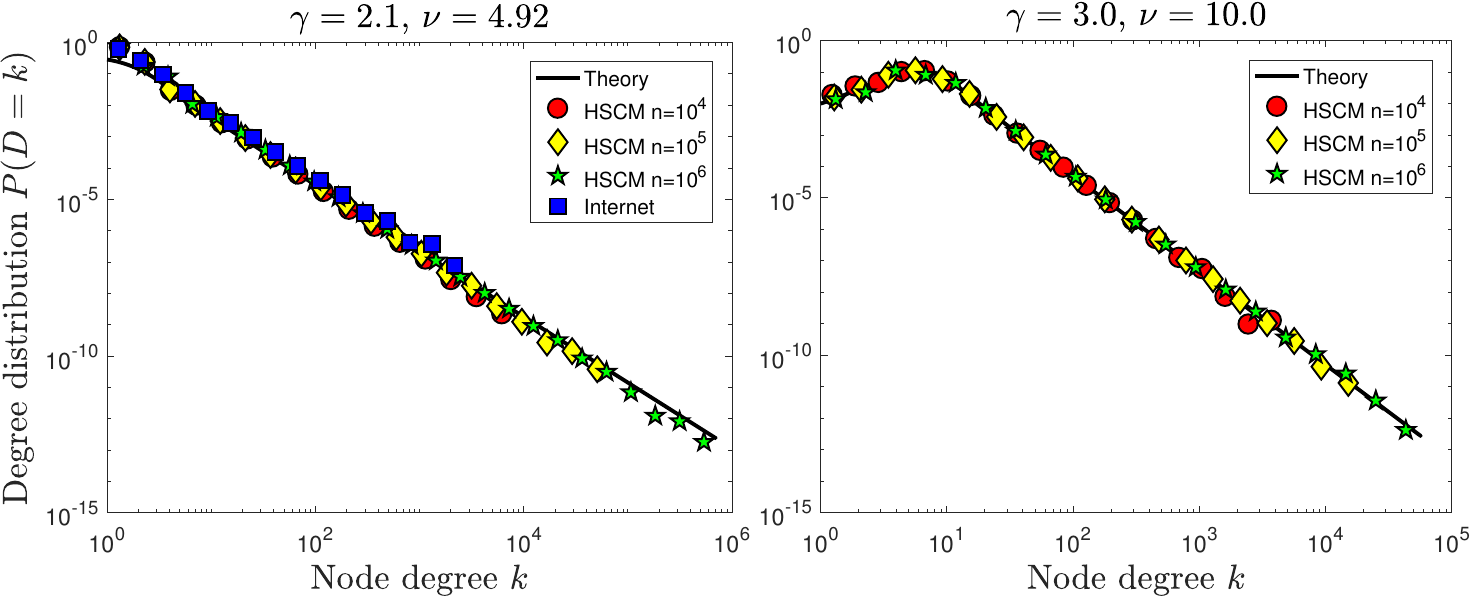}
    }
	\caption{{\bf Degree distribution in the HSCM, theory vs.\ simulations, and in the Internet.} The theory curve in the left panel is Eq.~\eqref{eq:P(k)} with $\gamma=1.1$ ($\alpha=2.1$) and $\nu=4.92$.
    The simulation data shown by symbols is averaged over $100$ random graphs for each graph size~$n$. All the graphs are generated according to the HSCM with the same $\gamma=1.1$ and $\nu=4.92$.
    The average degrees, averaged over $100$ random graphs, in the graphs of size $10^4$, $10^5$, and $10^6$, are $1.73$, $2.16$, and $2.51$, respectively.
    The Internet data comes from CAIDA's Archipelago measurements of the Internet topology at the Autonomous System level~\cite{claffy2009internet}.
    The number of nodes and the average degree in the Internet graph are $23,752$ and $4.92$.
    The right panel shows the theoretical degree distribution curve in Eq.~\eqref{eq:P(k)} with $\gamma=2.0$ and $\nu=10.0$
    versus simulations of $100$ random HSCM graphs of different sizes with the same $\gamma$ and $\nu$.
    The average degrees in the graphs of size $10^4$, $10^5$, and $10^6$, are $9.96$, $9.98$, and $10.0$, respectively.
    \label{fig:internet_vs_hscm}}
\end{figure}

\subsection{Unbiasedness and the maximum-entropy requirement}

While the average degree in the HSCM converges to a constant, and the degree distribution converges to a power law, the power-law HSCM is certainly just one of an infinite number of other models that possess these two properties. One example is random hyperbolic graphs~\cite{krioukov2010hyperbolic} that also have constant average degree and power-law degree distribution, but have larger numbers of triangles, non-zero clustering in the limit. Is the HSCM an unbiased model of random power-law graphs with constant average degree? That is, are the HSCM random graphs characterized by only these two properties and no others? Or colloquially, is the HSCM the model of ``maximally random'' power-law graphs with a constant average degree. This question can be formally answered by checking whether the HSCM satisfies the maximum-entropy requirement.

A discrete distribution $p_i$, $i=1,2,\ldots$, is said to satisfy the maximum-entropy requirement subject to constraints $\sum_i p_i f_{ir} = \bar{f}_r$, $r=1,2,\ldots$, where $f_r$s are some real functions of states~$i$, and $\bar{f}_r$s are a collection of real numbers, if the Gibbs/Shannon entropy of the distribution $S=-\sum_i p_i \log p_i$ is maximized subject to these constraints~\cite{jaynes1957information}. This entropy-maximizing distribution is known to be always unique, belonging to the exponential family of distributions, and it can be derived from the basic consistency axioms: uniqueness and invariance with respect to a change of coordinates, system independence and subset independence~\cite{shore1980axiomatic,tikochinsky1984consistent,skilling1988axioms}. Since entropy is the unique measure of information satisfying the basic requirements of continuity, monotonicity, and system/subset independence~\cite{shannon1948mathematical}, the maximum-entropy requirement formalizes the notion of encoding into the probability distribution $p_i$ describing a stochastic system, all the available information about the system given to us in the form of the constraints above, and \emph{not} encoding any other information not given to us. Since the entropy-maximizing distribution is unique, any other distribution necessarily but possibly implicitly introduces biases by encoding some additional ad-hoc information, and constraining some other system properties, concerning which we are not given any information, to some ad-hoc values. Clearly, such uncontrolled information injection into a model of a system may affect the predictions one may wish to make about the system using the model, and indeed it is known that given all the available information about a system, the predictive power of a model that describes the system is maximized by the maximum-entropy model~\cite{shore1980axiomatic,tikochinsky1984consistent,skilling1988axioms}. Perhaps the best illustration of this predictive power is the predictive power of equilibrium statistical mechanics, which can be formulated almost fully in terms of the maximum-entropy principle~\cite{jaynes1957information}.

To illustrate the maximum-entropy requirement in application to random graphs, suppose we are to define a random graph ensemble, and the \emph{only} available information is that these random graphs must have $n$ nodes and $m$ edges. From the purely probabilistic perspective, any random graph ensemble satisfying these constraints---random $m$-stars or $m$-cycles, for instance, if $m<n$---would be an equally good one. Yet there is only one unique ensemble that satisfies not only these constraints but also the maximum-entropy requirement. This ensemble is $\mathcal{G}_{n,m}$ because in $\mathcal{G}_{n,m}$ \emph{any} graph with $n$ nodes and $m$ edges is equally likely, so that the probability distribution on the set of all graphs with $n$ nodes and $m$ edges is uninform, and without further constraints, the uniform distribution is the maximum-entropy distribution on the state space, which in this case is $i=1,\ldots,{{n\choose2}\choose m}$, the number of such graphs. Random $m$-stars or $m$-cycles, while satisfying the constraints, inject into the model, in this case explicitly, additional information about the graph structure that was not given to us. Clearly, predictions based on random $m$-stars versus $\mathcal{G}_{n,m}$ may be very different, as the first model trivially predicts that $m$-stars occur with probability $1$, while they appear with a nearly zero probability in $\mathcal{G}_{n,m}$ if $n$ and $m$ are large.

A slightly less trivial example is the SCM. In this case the given information is that the expected degrees of nodes $i=1,\ldots,n$ must be $k_i\in\R_+$, and the state space is all the $2^{n\choose2}$ graphs on $n$ nodes. As shown in~\cite{park2004statistical,bianconi2008entropy,garlaschelli2008maximum,squartini2011analytical}, the unique entropy-maximizing ensemble satisfying these constraints is given by random graphs in which nodes $i$ and $j$ are connected with probabilities $p_{ij}=1/\left(kn/\kappa_i\kappa_j+1\right)$, where $k=\sum_i k_i$, and $\kappa_i$s are the unique solution of the system of $n$ equations $\sum_j p_{ij}=k_i$. The popular Chung-Lu model~\cite{chung2002connected,chung2002average} is different in that the connection probability there is $p_{ij}^{CL}=\min\left(k_ik_j/kn,1\right)$, which can be thought of as a classical-limit approximation to the entropy-maximizing Fermi-Dirac $p_{ij}$ above. While the CL ensemble also satisfies the desired constraints $\sum_j p_{ij}^{CL}=k_i$ (albeit only for sequences $k_i$ s.t.\ $k_ik_j/kn\leq 1$), it does not satisfy the maximum-entropy requirement, so that it injects, in this case implicitly, some additional information into the ensemble, constraining some undesired properties of graphs in the ensemble to some ad-hoc values. Since the undesired information injection is implicit in this case, it may be quite difficult to detect and quantify all the biases introduced into the ensemble.

\subsection{Main results}

The main result of this paper is the proof in Theorem~\ref{thm:main_result} that the HSCM is unbiased, that is, that the HSCM random graphs maximize the Gibbs entropy of random graphs whose degree distribution and average degree converge to~(\ref{eq:P(k)},\ref{eq:ED=nu}).

The first difficulty that we face in proving this result is how to properly formulate the entropy-maximization problem under these constraints. Indeed, we are to show that the probability distributions $P_n$ that the HSCM defines on the set of $n$-sized graphs $G_n$ maximizes the graph entropy
\begin{align*}
	\mathcal{S}[P'_n] = - \sum_{G_n}P'_n(G_n) \log P'_n(G_n)
\end{align*}
across all the distributions $P'_n$ that define random graph ensembles with the degree distributions and average degrees converging to~(\ref{eq:P(k)},\ref{eq:ED=nu}). These constraints are quite different than the SCM constraints, for example, because for any fixed $n$, we do not have a fixed set of constraints or sufficient statistics. Instead of introducing such sufficient statistics, e.g., expected degrees converging to a desired Pareto distribution, and proceeding from there, we show in Section~\ref{sec:definitions} that the problem of graph entropy maximization under these constraints is equivalent to a graphon entropy maximization problem~\cite{krioukov2016clustering}, i.e., to the problem of finding a graphon $W$ that maximizes graphon entropy
\begin{align*}
	\sigma_n[W'] = \iint_{A_n^2} H\left(W'(x,y)\right)\,d\mu_n(x)\,d\mu_n(y),
\end{align*}
where $H(p)=-p \log p - (1-p) \log(1-p)$ is the entropy of a Bernoulli random variable with success probability $p$, across all the graphons $W'$ that satisfy the constraint
\begin{align*}
	(n-1)\int_{A_n} W'(x,y)\,d\mu_n(y) = \kappa_n(x),
\end{align*}
where $\kappa_n(x) \approx \sqrt{\nu n} \, e^{-x}$ is the expected degree of a node at coordinate $x$ in the power-law HSCM. We then prove in Proposition~\ref{prop:max_entropy_graphon} that the unique solution to this graphon entropy maximization problem is given by the Fermi-Dirac graphon~$W$ in~\eqref{eq:WFD}. The fact that the Fermi-Dirac graphon is the unique solution to this graphon entropy maximization problem is a reflection of the basic fact in statistical physics that the grand canonical ensemble of Fermi particles, which are edges of energy $x+y$ in our case, is the unique maximum-entropy ensemble with fixed expected values of energy and number of particles~\cite{kapur1989maximum}, in which the probability to find a particle at a state with energy $x+y$ is given by~\eqref{eq:WFD}.

Yet the solutions to the graph and graphon entropy maximization problems yield equivalent random graph ensembles only if the rescaled graph entropy $\mathcal{S}^\ast[P_n] = \mathcal{S}[P_n]/\binom{n}{2}$ converges to the graphon entropy $\sigma_n[W]$. Here we face another difficulty that since our ensembles are sparse, both $\mathcal{S}^\ast[P_n]$ and $\sigma_n[W]$ converge to zero, so that we are actually to prove that the two entropies converge to each other faster than either of them converges to zero. To this end we prove in Theorems~\ref{thm:convergence_graphon_entropy} and~\ref{thm:convergence_graph_entropy} that both the graphon and graph entropies converges to zero as $\sigma_n[W],\mathcal{S}^\ast[P_n]\sim \nu \log(n) / n$. The key result then, also in Theorem~\ref{thm:convergence_graph_entropy}, is the proof that if divided by the scaling factor of $\log(n) / n$, the difference between the graphon and graph entropies vanishes in the limit, $\lim_{n\to\infty} (n/\log n)|\mathcal{S}^\ast[P_n]-\sigma_n[W]|\to0$, meaning that the two entropies do indeed converge to each other faster than to zero.

The combination of graphon~\eqref{eq:WFD} being the entropy maximizer, and the convergence of the rescaled graph entropy to the entropy of this graphon, implies the main result in Theorem~\ref{thm:main_result} that the power-law HSCM is a graph entropy maximizer subject to the degree distribution and average degree constraints~(\ref{eq:P(k)},\ref{eq:ED=nu}).

\subsection{Exchangeability and projectivity}

In addition to the natural, dictated by real-world network data, requirements of constant, i.e., independent of graphs size $n$, average degree and power-law degree distribution, as well as the maximum-entropy requirement, dictated by the basic statistical considerations, a reasonable model of real networks must also satisfy two more requirements: exchangeability and projectivity.

Exchangeability takes care of the fact that node labels in random graph models are usually meaningless. Even though node labels in real networks often have some network-specific meaning, such as autonomous system numbers in the Internet~\cite{dhamdhere2011years}, node labels in random graph models can be, and usually are, random integer indices $i=1,2,\ldots$. A random graph model is exchangeable if for any permutation $\sigma$ of node indices $i$, the probabilities of any two graphs $G$ and $G_\sigma$ given by adjacency matrices $G_{i,j}$ and $G_{\sigma(i),\sigma(j)}$ are the same, $P(G)=P(G_\sigma)$~\cite{aldous1981representations,diaconis2008graph}.

A random graph model is projective if there exists a map $\pi_{n\mapsto n'}$ from graphs of size $n$ to graphs of size $n'<n$ such that the probability of graphs in the model satisfies $P(G_{n'})=P(\pi_{n\mapsto n'}(G_n))$~\cite{kallenberg2002foundations,shalizi2013consistency}. If this condition is satisfied, then it is easy to see that the same model admits a dual formulation as an equilibrium model of graphs of a fixed size, or as a growing graph model~\cite{krioukov2013duality}. If this requirement is not satisfied, then as soon as one node is added to a graph, e.g., due to the growth of a real network that this graph represents, then the resulting bigger graph is effectively sampled from a different distribution corresponding to the model with different parameters, necessarily affecting the structure of its existing subgraphs, a clearly unrealistic scenario. As the simplest examples, $\G_{n,p}$ is projective (the map $\pi_{n\mapsto n'}$ simply selects any subset of $n$ nodes consisting of $n'$ nodes), but $\G_{n,k/n}$ with constant $k$ is not. In the first case, one can realize $\G_{n,p}$ by growing graphs adding nodes one at a time, and connecting each new node to all existing nodes with probability $p$, while in the second case such growth is impossible since the existing edges in the growing graphs must be removed with probability $1/n$ for the resulting graphs to be samples from $\G_{n,k/n}$ for each~$n$.

The HSCM random graphs are manifestly exchangeable as any graphon-based ensemble~\cite{lovasz2006limits,janson2013graphons}. Here we note that the fact that these graphs are both sparse and exchangeable is \emph{not} by any means in conflict with the Aldous-Hoover theorem~\cite{aldous1981representations,hoover1979relations} that states that the limit graphon, mapped to the unit square, of any exchangeable sparse graph family is necessarily zero. Indeed, if mapped to unit square, the limit HSCM graphon $W_{I,n}$~\eqref{eq:W_n} is zero as well. We also note that the convergence of $W_{I,n}$ to zero does not mean that the ensemble converges to infinite empty graphs. In fact, the expected degree distribution and average degree in the ensemble converge to~(\ref{eq:P(k)},\ref{eq:ED=nu}) in the limit, as stated above.

If $\gamma = 2$, the HSCM ensemble is also projective, but only with a specific labeling of nodes breaking exchangeability. This can be seen by observing that the density of points on intervals $A_n$, $\delta_n=n/\mu(A_n)=\gamma\left(\beta^2\nu\right)^{\gamma/2}n^{1-\gamma/2}$, and consequently on the whole real line $\R$ in the limit, is constant $\delta=\nu/2$ if $\gamma=2$. In this case, the HSCM can be equivalently defined as a model of growing labeled graphs as follows: for $n=1,2,\ldots$, the location $x_n$ of new node $n$ belongs to $A_n$'s increment, $x_n\in B_n = A_n\setminus A_{n-1}$ ($A_0=\emptyset$), and sampled from $\mu$ restricted to this increment, i.e., from the probability measure $\tilde{\mu}_n=\mu|_{B_n}/\mu(B_n)=\gamma\,e^{\gamma(x-R_n+R_{n-1})}$. Having its location sampled, new node $n$ then connects to all existing nodes $i=1\ldots n-1$ with probability given by~\eqref{eq:WFD}.

This growing model is equivalent to the original equilibrium HSCM definition in Section~\ref{sec:intro_hscm} only asymptotically. However, the exact equivalence, for each $n$, between the equilibrium HSCM with ordered $x_i$s, $x_i<x_{i+1}$, and its growing counterpart can be also achieved by ensuring that the joint distribution of $x_i$s is exactly the same in both cases, using basic properties of Poisson point processes~\cite{krioukov2013duality}. Specifically, the equilibrium definition in Section~\ref{sec:intro_hscm} must be adjusted by making the right boundary $R_n$ of interval $A_n$ not a fixed function of $n$~\eqref{eq:intro_Rn}, but a random variable $R_n=(1/2)\log(2V_n)$, where $V_n$ is a random variable sampled from the Gamma distribution with shape $n$ and rate $\delta=\nu/2$. Node $n$ is then placed at random coordinate $x_n=R_n$, while the coordinates of the rest of $n-1$ nodes are sampled from probability measure $\mu_n$---measure $\mu$ restricted to the random interval $A_n=(-\infty,R_n]$---and then labeled in the increasing order of their coordinates. The growing model definition must be also adjusted: the coordinate $x_{n+1}$ of the $n+1$'th node is determined by $v_{n+1}=v_n+V$, where $v_0=0$, $v_i=(1/2)e^{2x_i}$, and $V$ is a random variable sampled from the exponential distribution with rate $\delta=\nu/2$. One can show that coordinates $x_i$, both for finite and infinite $n$, in both the equilibrium and growing HSCM models defined this way, are equivalent realizations of the same Poisson point process on $\R$ with measure $\mu$ and rate $\delta$, converging to the binomial sampling with $R_n$ fixed to~\eqref{eq:intro_Rn}~\cite{krioukov2013duality}.

The projective map $\pi_{n\mapsto n'}$ in the projectivity definition above, simply maps graphs $G_n$ to their subgraphs induced by nodes $i=1\ldots n'$. We note that even though the growing HSCM is not exchangeable since it relies on labeling of nodes in the increasing order of their coordinates, it is nevertheless equivalent to the equilibrium HSCM with this ordered labeling, because the joint distribution of node coordinates, and the linking probability as a function of these coordinates are the same in both the equilibrium and growing HSCM definitions~\cite{krioukov2013duality}. This observation suggests that there might exist a less trivial projective map such that the HSCM is both projective and exchangeable at the same time.

\subsection{Other remarks}

We note that thanks to its projectiveness, the power-law HSCM was shown in~\cite{zuev2016hamiltonian} to be equivalent to a soft version of preferential attachment, a model of growing graphs in which new nodes connect to existing nodes with probabilities proportional to the expected degrees of existing nodes. It is well-known that similar to the HSCM, the degree distribution and average degree in graphs grown according to preferential attachment do not essentially change either as graphs grow~\cite{dorogovtsev2000structure,krapivsky2000connectivity}. If $\gamma=2$, then the equivalence between the HSCM and soft preferential attachment is exact. If $\gamma\neq2$, then the HSCM, even with ordered labeling, is not equivalent to soft preferential attachment, but it is equivalent to its adjusted version with a certain rate of (dis)appearance of edges between existing vertices~\cite{zuev2016hamiltonian}. 

We also note that the HSCM is the zero-clustering limit~\cite{aldecoa2015hyperbolic} of random hyperbolic graphs~\cite{krioukov2010hyperbolic}, where the $\gamma=2$ case corresponds to the uniform density of points in the hyperbolic space $\mathbb{H}^d$, and where $x_i$s are the radial coordinates of nodes $i$ in the spherical coordinate system of the hyperboloid model of $\mathbb{H}^d$. These coordinates can certainly not be negative, but the expected fraction of nodes with negative coordinates in the HSCM is negligible: $\mu_n(\R_-)=\left(\beta^2\nu/n\right)^{\gamma/2}\to0$. In the zero-clustering limit, the angular coordinates of nodes in $\mathbb{H}^d$ are ignored in the hyperbolic graphon~\cite{krioukov2010hyperbolic}, which becomes equivalent to~\eqref{eq:WFD}.

As a final introductory remark we note that among the rigorous approaches to sparse exchangeable graphs, the HSCM definition is perhaps closest to graphon processes and graphexes in~\cite{caron2014sparse,veitch2015class,borgs2016sparse}. In particular in~\cite{borgs2016sparse}, where the focus is on graph convergence to well-defined limits, two ensembles are considered. One ensemble, also appearing in~\cite{veitch2015class}, is defined by any graphon $W:\R_+^2\to[0,1]$ and any measure $\mu$ on $\R_+$ ($\R_+$ can be replaced with any measure space). Graphs of a certain expected size, which is a growing function of time $t>0$, are defined by sampling points as the Poisson point process on $\R_+$ with intensity $t\mu$, then connecting pairs of points with the probability given by $W$, and finally removing isolated vertices. The other ensemble is even more similar to the HSCM. It is still defined by $W$ and $\mu$ on $\R_+$, but the location of vertex $n$ on $\R_+$ is sampled from $\mu_n=\mu|_{A_n}/\mu(A_n)$, where $A_n$s are finite-size intervals growing with $n$ whose infinite union covers the whole $\R_+$. The latter ensemble is not exchangeable, but both ensembles are shown to converge to properly stretched graphons defined by $W$, yet only if the expected average degree grows to infinity in the limit. The HSCM definition is different---in particular all $n$ vertices of $n$-sized graphs are sampled from the same $\mu_n$---ensuring exchangeability, and allowing for an explicit control of the degree distribution and average degree, which can be constant, but making the problem of graph convergence difficult. We do not further discuss graph convergence here, leaving it, as well as the generalization of the results to arbitrary degree distributions, for future publications.

\subsection{Paper organization}

In the next Section~\ref{sec:definitions} we first review in more detail the necessary background information and provide all the required definitions. In Section~\ref{sec:results} we formally state all the results in the paper, while Section~\ref{sec:proofs} contains all the proofs of these results.

\section{Background information and definitions}\label{sec:definitions}

\subsection{Graph ensembles and their entropy}

A graph ensemble is a set of graphs $\mathcal{G}$ with probability measure $P$ on $\mathcal{G}$. The
Gibbs entropy of the ensemble is
\begin{equation}\label{eq:def_gibbs_entropy}
	\mathcal{S}[P] = - \sum_{G \in \mathcal{G}}P(G) \log P(G)
\end{equation}
Note that this is just the entropy of the random variable $G$ with respect to the probability measure $P$.
When $G_n$ is a graph of size $n$, sampled from $\mathcal{G}$ according to measure $P$, we write $\mathcal{S}[G_n]$
instead of $\mathcal{S}[P]$. Given a set of constraints, e.g., in the form of graph properties fixed to given values,
the maximum-entropy ensemble is given by $P^*$ that maximizes $\S[P]$ across all measures $P$ that satisfy the constraints.
These constraints can be either sharp (microcanonical) or soft (canonical), satisfied either exactly or on average, respectively.
The simplest example of the constrained graph property is the number of edges, fixed to $m$, in graphs of size $n$.
The corresponding microcanonical and canonical maximum-entropy ensembles are $\G_{n,m}$ and $\G_{n,p}$ with $p=m/{n\choose2}$, respectively.
The $P^*$ is respectively the uniform and exponential Boltzmann distribution $P(G)=e^{-H(G)}/Z$ with Hamiltonian $H(G)=\lambda m(G)$, where
$m(G)$ is the number of edges in graph $G$, and the Lagrange multiplier $\lambda$ is given by $p=1/\left(e^\lambda+1\right)$~\cite{park2004statistical}.

When the constraints are given by the degrees of nodes, instead of the number of edges, we have the following characterization of
the microcanonical and canonical ensemble.

\subsubsection{Maximum-entropy graphs with a given degree sequence (CM)}
Given a degree sequence ${\bf d}_n = d_1\ldots d_n$, the microcanonical ensemble of graphs that have this degree
sequence is the configuration model (\texttt{CM})~\cite{bender1978asymptotic,molloy1995critical}.
The entropy-maximizing $P^*$ is uniform on the set of all graphs that have degree sequence ${\bf d}_n$.

\subsubsection{Maximum-entropy graphs with a given expected degree sequence (SCM)}\label{sec:SCM}
If the sharp CM constraints are relaxed to soft constraints, the result is the canonical ensemble of the soft configuration
model (\texttt{SCM})~\cite{bianconi2008entropy,garlaschelli2008maximum}. Given an expected degree
sequence ${\bf k}_n$, which in contrast to CM's ${\bf d}_n$, does not have to be a graphical sequence
of non-negative integers, but can be any sequence of non-negative real numbers, the SCM is defined by connecting nodes $i$ and $j$ with probabilities
\begin{align}\label{eq:def_scm_lambda}
  p_{ij} &= \frac{1}{e^{\lambda_i + \lambda_j} + 1}\text{, where Lagrange multipliers $\lambda_i$ are the solution of} \\
  k_i &= \sum_{i<j} p_{ij}, \quad i=1\ldots n.
\end{align}
The entropy-maximizing $P^*$ is the Boltzmann distribution
with Hamiltonian $H(G)=\sum_i \lambda_i d_i(G)$, where $d_i(G)$ is the degree of node $i$ in graph $G$~\cite{bianconi2008entropy,garlaschelli2008maximum}.

\subsection{Sparse graphs with a given degree distribution}

Let $p(k)$, $k=0,1,2,\ldots$, be a 
probability density function with finite mean. Denote by $\mathscr{D}$ the corresponding random variable,
and consider a sequence of graph ensembles that maximize Gibbs entropy under the
constraint that for all $k$
\begin{equation}\tag{C1}\label{eq:convergence_degree_hscm_constraint}
	\lim_{n \to \infty} \Prob{\mathscr{D}_n = k} = \Prob{\mathscr{D} = k},
\end{equation}
where $\mathscr{D}_n$ is the degree of a uniformly chosen node in the ensemble of graphs of size $n$. In other words, this is a maximum-entropy ensemble of graphs 
whose degree distribution converges to $p(k)$.

In addition to the degree distribution constraint, we also want our graphs to be sparse. The most common definition of \emph{sparseness} seems to be
that the number of edges is $o(n^2)$, so that the expected average degree can be unbounded. In contrast, here we use the term \emph{sparse} to mean that
the expected degree converges to the expected value of $\mathscr{D}$:
\begin{equation}\tag{C2}\label{eq:convergence_expected_degree_constraint_hscm}
	\lim_{n \to \infty} \Exp{\mathscr{D}_n} = \Exp{\mathscr{D}} := \nu.
\end{equation}
Constraint~\eqref{eq:convergence_expected_degree_constraint_hscm} implies that the number of edges is $O(n)$. We note that, in general, 
\eqref{eq:convergence_expected_degree_constraint_hscm} does not follow from \eqref{eq:convergence_degree_hscm_constraint}, since convergence in distribution
does not necessarily imply convergence in expectation.

We also note that constraints~(\ref{eq:convergence_degree_hscm_constraint},\ref{eq:convergence_expected_degree_constraint_hscm})
are neither sharp nor soft, since they deal only with the $n \to \infty$ limits of the degree distribution and expected degree.
We call these constraints \emph{hypersoft}, since as we will see below, random graphs satisfying these constraints can
be realized as random graphs with Lagrange multipliers that are not parameters but \emph{hyperparameters} in the statistics
terminology~\cite{evans2009probability}.
 
\subsection{Maximum-entropy graphs with hypersoft constraints}

Similar to the case of random graphs with a given (expected) degree sequence, we are to determine the distribution $P$ that satisfies 
\eqref{eq:convergence_degree_hscm_constraint} and \eqref{eq:convergence_expected_degree_constraint_hscm}, and maximizes the
Gibbs entropy. However, this task poses the question of what it means to maximize entropy under these limit constraints.
In particular, unlike an ensemble of graphs with a given (expected) degree sequence, we are no longer dealing with a set of graphs of 
fixed size, but with a sequence of graphs $(G_n)_{n \ge 1}$ of varying sizes.  
To answer this question, and to give a proper definition of entropy maximization under hypersoft constraints, we consider graphon-based
ensembles of graphs.

\subsubsection{Graphon-based graph ensembles}\label{sec:graphon-ensembles}

In the simplest case, a graphon is a symmetric integrable function $W : [0, 1]^2 \to [0,1]$. Graphons, or more precisely graphon
equivalence classes, consisting of all functions $W_\sigma$ such that $W(x,y)=W_\sigma(\sigma(x),\sigma(y))$ under all measure preserving
transformations $\sigma:[0,1]\to[0,1]$, are well-defined limits of dense graph families~\cite{lovasz2012large,janson2013graphons}.
One can think of the interval $[0,1]$ as the continuum
limit of node indices $i$, and of $W$ as the limit of graphs' adjacency matrices. Equivalently,
$W(x,y)$ is the probability that there exists the edge between ``nodes'' $x$ and $y$.
Graphons are an application to graphs of a class of earlier results on exchangeable arrays in statistics~\cite{aldous1981representations,aldous1983exchangeability},
and are better known as the \emph{connection probability} in random graphs with latent parameters in sociology~\cite{mcfarland1973social,faust1988comparison,mcpherson1991evolution,hoff2002latent}
and network science~\cite{caldarelli2002scale,boguna2003class}, also known in graph theory as \emph{inhomogeneous random graphs}~\cite{bollobas2007phase}.
Here we use the term \emph{graphon} to refer to any symmetric function $W : A \times A \to [0,1]$, for some $A \subseteq \mathbb{R}$.

Let $\mu$ be a probability measure on $A \subseteq \mathbb{R}$ and $W$ a graphon. Then the standard graphon-based ensemble
$\mathcal{G}_{W, \mu, A}$ known as $W$-random graphs~\cite{lovasz2006limits} is the ensemble of random $n$-sized graphs $G_n$ defined by first
i.i.d.\ sampling $n$ node coordinates ${\bf x}_n  = x_1, \dots, x_n \in A$
according to $\mu$, and then connecting every node pair $(i,j)$, independently, with probability $W(x_i,x_j)$. 

To be able to satisfy the hypersoft constraints we generalize this ensemble as follows. Let $\mu$ be a measure on $\mathbb{R}$, $W$ a graphon, and let 
$\mathbf{A}=(A_1,A_2,\ldots)$, $A_n \subseteq A_{n + 1}$, be an infinite sequence of growing subsets of $\mathbb{R}$ such that $\bigcup_{n \ge 1} A_n = \mathbb{R}$ and $\mu(A_n) < \infty$. 
We then define the graphon ensemble $\mathcal{G}_{W, \mu,\mathbf{A}} = (\mathcal{G}_{W, \mu_n, n})_{n \ge 1}$ to be random graphs $(G_n)_{n \ge 1}$ 
defined by the graphon $W$ and measures
\begin{align*}
	\mu_n = \frac{\mu|_{A_n}}{\mu(A_n)},
\end{align*}
which are the probability measures on $A_n$ associated with $\mu$. To sample a graph $G_n$ from this ensemble, $n$ i.i.d.\ points
$x_i\in A_n$ are first sampled according to measure $\mu_n$, and then pairs of nodes $i$ and $j$ are connected by an edge with probability $W(x_i,x_j)$.

We remark that if $A_n = A = [0,1]$ and $\mu$ is the uniform measure,
we are in the most classical settings of $W$-random graphs~\cite{lovasz2006limits}. In the case of an arbitrary $\mathbf{A}$ and measure $\mu$,
our random graph ensemble is similar to a model considered recently in~\cite{borgs2016sparse}, Section 2.4.
There, a growing graph construction is considered where $G_{n}$ is created from $G_{n - 1}$ by sampling the coordinate $x_n$ of node $n$ according to measure $\mu_n$,
and then connecting it to all existing nodes $i$, independently with probability $W(x_n, x_i)$. The main difference between such sampling and the one above is that
in the former case, the coordinates $x_i$ of different nodes $i=1\ldots n$ are sampled from different measures $\mu_i$, thus breaking exchangeability,
while in the latter case, all $x_i$s are sampled from the same measure $\mu_n$.

\subsubsection{Bernoulli and graphon entropies}\label{sec:bernoulli_graphon_entropies}

Given the coordinates ${\bf x}_n$, edges in our random graphs are independent Bernoulli random variables, albeit with different success probabilities $W(x_i, x_j)$. 
The conditional Bernoulli entropy of random graphs with fixed coordinates ${\bf x}_n$ is thus
\begin{align*}
	\mathcal{S}[G_n| {\bf x}_n] = \sum_{i < j} H\left(W(x_i, x_j)\right),
\end{align*}
where $H$ is the Bernoulli entropy,
\begin{equation}\label{eq:def_p_entropy}
	H(p) = -p\log p - (1-p) \log (1-p), \quad 0 \le p \le 1.
\end{equation}

The \emph{graphon entropy}~\cite{aldous1983exchangeability,janson2013graphons,hatami2013graph,chatterjee2011large,chatterjee2013estimating,radin2015singularities}
of $W$ with respect to $\mu$ on $A$ is defined as
\begin{equation}\label{eq:def_graphon_entropy}
	\sigma[W, \mu, A] = \iint_{A^2} H\left(W(x,y)\right)\,d\mu(x)\,d\mu(y),
\end{equation}
which we write as $\sigma[W, \mu]$, when $\mu$ has support $A$ and no confusion arises. In addition, if $G_n$ is a graph in the graphon ensemble $\mathcal{G}_{W, \mu_n, n}$,
we write $\sigma[G_n]$ for $\sigma[W, \mu_n, A_n]$.

Since for any two discrete random variables $X$ and $Y$, the expectation of the conditional entropy of $X$ given $Y$ is a lower bound for $X$'s entropy, 
$\mathcal{S}[X] \ge \Exp{\mathcal{S}[X|Y]}$, the graphon entropy definition implies
\begin{equation}\label{eq:gibbs-bernoulli-graphon-entropy}
	\mathcal{S}[G_n] \ge \Exp{\mathcal{S}[G_n| {\bf x}_n]} = \binom{n}{2} \Exp{H(W(x,y))} = \binom{n}{2} \sigma[G_n],
\end{equation}
The graphon entropy is thus a lower bound for the rescaled Gibbs entropy defined as
\begin{equation}\label{eq:rescaled_gibbs_entropy}
\mathcal{S}^\ast[G_n] = \frac{\mathcal{S}[G_n]}{\binom{n}{2}}. 
\end{equation}

Before we give our definition of the maximum-entropy ensemble of sparse graphs, it is instructive to consider the case of dense graphs.

\subsubsection{Dense maximum-entropy graphs with a given degree distribution}\label{sec:dense_HSCM}

Consider a sequence $({\bf d}_n)_{n \ge 1}$ of degree sequences ${\bf d}_n=(d_1\ldots d_n)_n$, $(d_i)_n\geq (d_{i+1})_n$, for which there exist constants $0 < c_1 \le c_2$ such
that $c_1 n \le (d_i)_n \le c_2 n$ for any $(d_i)_n \in {\bf d}_n$. Now let $(\G_n)_{n \ge 1}$ be a sequence of microcanonical ensembles of random
graphs, that is, CMs, defined by ${\bf d}_n$. 
If there exists a function
$f : [0,1] \to (0,1)$ such that for any $c\in(0,1)$
\begin{equation}\label{eq:dense-P-limit}
	\lim_{n \to \infty} \Prob{\frac{\mathscr{D}_n}{n} \leq c} = \Prob{f(U) \leq c}, 
\end{equation}
where $\mathscr{D}_n$ is the degree of a random node in random graph $G_n\in\G_n$ or equivalently, a uniformly sampled $(d_i)_n\in{\bf d}_n$,
and $U$ is a uniform random variable on $[0,1]$,
then it was proven in~\cite{chatterjee2011random} that the limit of the CM sequence $(\G_n)_{n \ge 1}$ is given by the graphon
\begin{equation}\label{eq:dense_max_entropy_graphon}
	W(x,y) = \frac{1}{e^{g(x) + g(y)}+1},
\end{equation}
where $g(x)$ is such that
\begin{equation}\label{eq:dense_graphon_entropy_constraint}
	f(x) = \int_0^1 W(x,y) \, dy = \int_A W_A(x,y)\,d\mu(y) = \int_A \frac{1}{e^{x+y}+1}\,d\mu(y),
\end{equation}
where $A$ is the image of $g$ and $\mu=g^{-1}$ (functions $f$ and $g$ are continuous and strictly increasing almost everywhere on $[0,1]$~\cite{chatterjee2011random}).

Some important observations are in order here. First, we note that \eqref{eq:dense-P-limit} is very similar to \eqref{eq:convergence_degree_hscm_constraint}, with the exception that \eqref{eq:dense-P-limit} implies that the 
degrees of all nodes are $O(n)$, so that the graphs are dense. In particular, in our graphs we have that $\D_n = o(n)$
so that $\D_n/n \to 0$.

Second, consider the problem of maximizing the graphon entropy under the constraint
given by \eqref{eq:dense_graphon_entropy_constraint}. We will show in Proposition \ref{prop:max_entropy_graphon} that
the solution to this problem is given by \eqref{eq:dense_max_entropy_graphon}, where $g$ is defined by
\eqref{eq:dense_graphon_entropy_constraint}. Hence the graphon~\eqref{eq:dense_max_entropy_graphon} obtained in \cite{chatterjee2011random} maximizes the 
graphon entropy under the constraint~\eqref{eq:dense_graphon_entropy_constraint} imposed by the limit $f$ of the sequence of rescaled degree sequences $({\bf d}_n/n)_{n \ge 1}$.

Third, Theorem D.5 in~\cite{janson2013graphons} states that in dense $W$-random graph ensembles $\G_{W,\mu,A}$ 
\begin{equation}\label{eq:convergence_dense_gibbs_entropy}
	\lim_{n \to \infty} \mathcal{S}^\ast[G_n] = \sigma[W, \mu, A],
\end{equation}
meaning that the rescaled Gibbs entropy of $W$-random graphs converges to the graphon entropy of~$W$.

Given $f$, this result suggests to
consider the family of $W$-random graph ensembles $\tilde{G}_n$ defined by the Fermi-Dirac graphon~\eqref{eq:dense_max_entropy_graphon}
with $g$ given by~\eqref{eq:dense_graphon_entropy_constraint}, which we call the dense hypersoft configuration model (\texttt{HSCM}).
The distribution of rescaled degrees $\D_n/n$ in these HSCM
graphs converges to $f$, cf.~\eqref{eq:dense-P-limit}, 
while the limit of these graphs is also $W$ since the limit of any dense $W$-random graphs is $W$~\cite{borgs2008convergent}. That is, the limit of HSCM
ensembles $\tilde{G}_n$ and the limit of CM ensembles $\G_n$ with any $({\bf d}_n/n)_{n\geq1}$ converging to $f$, are the same Fermi-Dirac graphon $W$~\eqref{eq:dense_max_entropy_graphon}.
Since the two ensembles have the same graphon limit, their rescaled Gibbs entropies converge to the same value, equal, thanks to~\eqref{eq:convergence_dense_gibbs_entropy}, to the graphon entropy,
even though for any finite $n$ the two ensembles are quite different.

Fourth, if we replace the sequence of degree sequences $({\bf d}_n)_{n \ge 1}$ with a sequence of expected degree
sequences $({\bf k}_n)_{n \ge 1}$ converging upon rescaling to $f$, and then replace the sequence of CMs with
the corresponding sequence of SCMs, then the limit of this SCM sequence is the same graphon $W$~\eqref{eq:dense_max_entropy_graphon}~\cite{chatterjee2011random},
so that the rescaled SCM Gibbs entropy also converges to the same graphon entropy, squeezed, for any finite $n$, between
the CM entropy~\cite{bender1978asymptotic,bianconi2009entropy,barvinok2013number} and the HSCM entropy~\eqref{eq:gibbs-bernoulli-graphon-entropy}.
In other words, dense CM, SCM, and HSCM are all equivalent in the limit,
versus the sparse case where the equivalence is broken~\cite{anand2009entropy,squartini2015breaking} since the graphon is zero in the limit.

The key point here however is that the rescaled degree distribution in the dense HSCM converges to a well-defined limit, i.e.,
satisfies the hypersoft constraints~\eqref{eq:dense-P-limit},
and that the HSCM Gibbs entropy converges to the graphon entropy. Therefore if we \emph{define} a maximum-entropy ensemble
under given hypersoft constraints $f$ to be an ensemble that: 1)~satisfies these constraints, i.e., has a degree distribution
converging to $f$ in the limit, 2)~maximizes graphon entropy under these constraints given by~\eqref{eq:dense_graphon_entropy_constraint},
and 3)~has rescaled Gibbs entropy converging to the graphon entropy, then the dense HSCM ensemble is trivially such an ensemble.
In addition, this ensemble is the unique maximum-entropy hypersoft ensemble in the dense case~\cite{chatterjee2011random}.

These observations instruct us to extend this definition of maximum-entropy hypersoft dense graphs to sparse graphs,
where we naturally replace the dense hypersoft constraints~\eqref{eq:dense-P-limit} with
sparse hypersoft constraints~(\ref{eq:convergence_degree_hscm_constraint},\ref{eq:convergence_expected_degree_constraint_hscm}).
However, things become immediately less trivial in this case. In particular, we face the difficulty that since the limit graphon of
any sparse exchangeable graph ensemble is zero according to
the Aldous-Hoover theorem~\cite{aldous1981representations,hoover1979relations}, the entropy of this graphon is zero as well. Since this entropy is zero, $|\mathcal{S}^\ast[G_n] - \sigma[G_n]| \to 0$
in~\eqref{eq:convergence_dense_gibbs_entropy} does not necessarily imply that the rescaled Gibbs entropy converges to the graphon entropy.
We address this difficulty next.

\subsubsection{Rescaled graphon entropy of sparse graphs}

Consider again the generalized graphon ensemble $\G_{W,\mu_n,n}$ of random graphs defined in Section~\ref{sec:graphon-ensembles}, and their graphon entropy defined as
\begin{equation}\label{eq:def_graphon_entropy_n}
	\sigma[G_n] = \iint_{A_n^2} H\left(W(x,y)\right)\,d\mu_n(x)\,d\mu_n(y).
\end{equation}
If $W\neq\{0,1\}$ everywhere, then for any finite $n$, $\sigma[G_n]$ is positive, but if the ensemble is sparse, then $\lim_{n\to\infty}\sigma[G_n]=0$.

To address this problem we rescale the graphon entropy $\sigma[G_n]$ such that upon rescaling it converges to a positive constant. That is, let $a_n$ be a sequence such that
\begin{equation}\label{eq:def_rescaled_graphon_entropy}
	\lim_{n \to \infty}  a_n \sigma[G_n] = a \in (0, \infty).
\end{equation}
This rescaling does not affect the graphon entropy maximization problem, 
because maximizing $\sigma[G_n]$ for every $n$ as a functional of $W$ under a given constraint is equivalent to maximizing $a_n\sigma[G_n]$ under the same constraint.

Upon this rescaling, we see that the rescaled Gibbs entropy converges to the graphon entropy, generalizing~\eqref{eq:convergence_dense_gibbs_entropy}, if
\begin{equation}\label{eq:def_maximal_entropy_graphon_ensemble}
	\lim_{n \to \infty} a_n\left|\mathcal{S}^\ast[G_n] - \sigma[G_n]\right| = 0,
\end{equation}
in which case $a_n \mathcal{S}^\ast[G_n]$ converges to $a$. This condition implies that the rescaled Gibbs entropy $\mathcal{S}^\ast[G_n]$ converges to the graphon entropy $\sigma[G_n]$ faster than either of them converge to zero.

\subsubsection{Sparse maximum-entropy graphs with a given degree distribution}\label{sec:entropy-maximization-definition}

With the graphon rescaling in the previous section, we can now \emph{define} a graphon ensemble $\mathcal{G}_{W, \mu_n, n}$ to be a maximum-entropy ensemble
under the sparse hypersoft constraints~(\ref{eq:convergence_degree_hscm_constraint},\ref{eq:convergence_expected_degree_constraint_hscm})
if:
\begin{enumerate}[\upshape 1)]
	\item the degree distribution and expected degree in $\mathcal{G}_{W, \mu_n, n}$ converge to~(\ref{eq:convergence_degree_hscm_constraint},\ref{eq:convergence_expected_degree_constraint_hscm});
	\item graphon entropy $\sigma[G_n]$ of $\mathcal{G}_{W, \mu_n, n}$ is maximized for every $n$ under the constraint imposed by~(\ref{eq:convergence_degree_hscm_constraint},\ref{eq:convergence_expected_degree_constraint_hscm}); and
	\item \eqref{eq:def_maximal_entropy_graphon_ensemble} holds, with $a_n$ given by \eqref{eq:def_rescaled_graphon_entropy}.
\end{enumerate}

Our main result (Theorem \ref{thm:main_result}) is that the sparse power-law hypersoft configuration model, defined next, is a maximum-entropy model under hypersoft constraints. 

\subsubsection{Sparse power-law hypersoft configuration model (sparse HSCM)}\label{sec:HSCM}

The sparse power-law \texttt{HSCM}$(\gamma,\nu)$ is defined as the graphon ensemble $\mathcal{G}_{W, \mu_n, n}=\mathcal{G}_{W, \mu, \mathbf{A}}$, Section~\ref{sec:graphon-ensembles}, with
\begin{align}
  W(x,y) &= \frac{1}{e^{x+y}+1}, \label{eq:def_hscm_graphon} \\
  \mu &= e^{\gamma x},\quad\gamma>1, \label{eq:def_hscm_mu} \\
  A_n &= (-\infty, R_n], \label{eq:def_hscm_A_n} \\
  R_n &= \frac{1}{2}\log\frac{n}{\nu\beta^2},\quad\nu>0,\quad\beta=1-\frac{1}{\gamma}, \label{eq:def_hscm_R_n} \\
  \mu_n &= \frac{\mu|_{A_n}}{\mu(A_n)}= \gamma\,e^{\gamma(x-R_n)}. \label{eq:def_hscm_mu_n}
\end{align}
The dense power-law HSCM is recovered from the above definition by setting $\nu=\tilde{\nu}n$, where $\tilde{\nu}$ is a constant, in which case $R_n=R=-(1/2)\log\left(\tilde{\nu}\beta^2\right)$, $A_n=A=(-\infty,R]$, and $\mu_n=\mu=\gamma\,e^{\gamma(x-R)}$.

\section{Results}\label{sec:results}

In this section we formally state our results, and provide brief overviews of their proofs appearing in subsequent sections. The main result is Theorem~\ref{thm:main_result}, stating that the \texttt{HSCM}$(\gamma,\nu)$ defined in Section~\ref{sec:HSCM} is a maximum-entropy model under hypersoft power-law degree distribution constraints, according to the definition in Section~\ref{sec:entropy-maximization-definition}.
This result follows from Theorems~\ref{thm:mixed_poisson_degrees_hscm}-\ref{thm:convergence_graph_entropy} and Proposition~\ref{prop:max_entropy_graphon}.
Theorems~\ref{thm:mixed_poisson_degrees_hscm},\ref{thm:average_degree_hscm} establish the limits of the degree distribution and expected average degree in the \texttt{HSCM}$(\gamma,\nu)$.
Proposition~\ref{prop:max_entropy_graphon} states that \texttt{HSCM}'s graphon uniquely maximizes the graphon entropy under the constraints imposed by the degree distribution.
Theorem~\ref{thm:convergence_graphon_entropy} establishes proper graphon rescaling and the limit of the rescaled graphon.
Finally, the most critical and involved Theorem~\ref{thm:convergence_graph_entropy} proves that the rescaled Gibbs entropy of the \texttt{HSCM} converges to its rescaled graphon entropy.

\subsection{Main result}

Let $Y$ be a Pareto random variable with shape $\gamma>1$, scale $\nu\beta>0$, $\beta=1-1/\gamma$, so that $Y$'s probability density function is
\begin{align}\label{eq:pareto-pdf}
  P_Y(y)&=\gamma\left(\nu\beta\right)^\gamma y^{-\alpha},\quad y\geq\nu\beta,\quad \alpha=\gamma+1,\quad\text{and}\\
  \label{eq:pareto_tail_cdf}
	\Prob{Y > y} &= \begin{cases}
		(\nu \beta)^\gamma y^{-\gamma} &\mbox{if } y \ge \nu \beta \\
		1 &\mbox{otherwise.} 
	\end{cases}
\end{align}
Let $\D$ be a discrete random variable with probability density function
\begin{equation}\label{eq:def_mixed_poisson_degrees}
	\Prob{\mathscr{D} = k} = \Exp{\frac{Y^k}{k!} e^{-Y}},\quad k=0,1,2,\ldots,
\end{equation}
which is the mixed Poisson distribution with mixing parameter $Y$~\cite{grandell1997mixed}. Then it follows that
\begin{equation}\label{eq:expected-degree}
\Exp{\mathscr{D}} = \Exp{Y} = \nu,
\end{equation}
and since $Y$ is a power law with exponent $\alpha$, the tail of $\mathscr{D}$'s distribution is also a power law with the same exponent~\cite{grandell1997mixed}.
In particular, $\Prob{\D = k}$ is given by \eqref{eq:P(k)}.
Therefore if $\D$ is the degree of a random node in a random graph ensemble, then graphs in this ensemble are sparse and have a power-law
degree distribution.

Our main result is:
\begin{theorem}\label{thm:main_result}
For any $\gamma>1$ and $\nu>0$, \texttt{HSCM}$(\gamma, \nu)$ is a maximum entropy ensemble of random graphs under the hypersoft constraints~(\ref{eq:convergence_degree_hscm_constraint},\ref{eq:convergence_expected_degree_constraint_hscm})
with $\Prob{\mathscr{D} = k}$ and $\nu$ defined by~(\ref{eq:pareto-pdf}-\ref{eq:expected-degree}).
\end{theorem}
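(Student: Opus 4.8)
The plan is to verify directly the three defining conditions of a maximum-entropy hypersoft ensemble listed in Section~\ref{sec:entropy-maximization-definition}, supplying each one from the intermediate results quoted above; the theorem is then an assembly, with all the genuine work located upstream in those intermediate statements rather than in the final combination.

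First I would check condition~1, that the degree distribution and expected degree of the \texttt{HSCM}$(\gamma,\nu)$ converge to the prescribed limits~(\ref{eq:convergence_degree_hscm_constraint},\ref{eq:convergence_expected_degree_constraint_hscm}). This is immediate from Theorem~\ref{thm:mixed_poisson_degrees_hscm}, which identifies the limiting degree distribution with the mixed Poisson law~\eqref{eq:def_mixed_poisson_degrees}, equivalently~\eqref{eq:P(k)}, together with Theorem~\ref{thm:average_degree_hscm}, which gives $\lim_{n\to\infty}\Exp{\D_n}=\nu$. Since $Y$ is Pareto with exponent $\alpha=\gamma+1$, the relation~\eqref{eq:expected-degree} and the mixed-Poisson tail asymptotics in~\eqref{eq:Pk_asymptotic} confirm both the power-law shape of $\Prob{\D=k}$ and the constant expected degree $\Exp{\D}=\Exp{Y}=\nu$, so the two hypersoft constraints are exactly those of the theorem.

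Next I would check condition~2, that the graphon entropy $\sigma[G_n]$ is maximized for every $n$ under the constraint imposed by the degree distribution. The key step is to recognize that this constraint is precisely the expected-degree constraint $(n-1)\int_{A_n}W(x,y)\,d\mu_n(y)=\kappa_n(x)$ fixing the expected-degree profile $\kappa_n$ that produces the target mixed-Poisson law, and then to invoke Proposition~\ref{prop:max_entropy_graphon}, which states that among all graphons satisfying this constraint the Fermi-Dirac graphon~\eqref{eq:def_hscm_graphon} is the \emph{unique} maximizer of $\sigma[G_n]$. Thus the \texttt{HSCM} graphon is the graphon-entropy maximizer under the degree-distribution constraint at every finite $n$.

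Finally I would check condition~3, namely~\eqref{eq:def_maximal_entropy_graphon_ensemble}. Here Theorem~\ref{thm:convergence_graphon_entropy} supplies the correct rescaling, $a_n\sim n/\log n$ with $\sigma[G_n]\sim\nu\log n/n$, so that $a_n\sigma[G_n]$ tends to a positive constant, and Theorem~\ref{thm:convergence_graph_entropy} supplies the decisive estimate $\lim_{n\to\infty}(n/\log n)\,|\S^\ast[G_n]-\sigma[G_n]|=0$. Together these give $\lim_{n\to\infty}a_n\,|\S^\ast[G_n]-\sigma[G_n]|=0$, i.e.\ the rescaled Gibbs entropy converges to the graphon entropy faster than either converges to zero. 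With conditions~1--3 verified, the \texttt{HSCM}$(\gamma,\nu)$ satisfies the definition of a maximum-entropy hypersoft ensemble, which proves the theorem. I expect the only real obstacle in this chain to lie not in the final assembly but in Theorem~\ref{thm:convergence_graph_entropy}: because both entropies vanish, one cannot simply appeal to the dense-case identity~\eqref{eq:convergence_dense_gibbs_entropy}, and controlling the difference $|\S^\ast[G_n]-\sigma[G_n]|$ down to $o(\log n/n)$, beating the rate at which each term separately decays to zero, is where the delicate analysis resides.
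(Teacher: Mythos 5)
Your proposal is correct and follows exactly the paper's own route: the paper proves Theorem~\ref{thm:main_result} precisely by assembling Theorems~\ref{thm:mixed_poisson_degrees_hscm} and~\ref{thm:average_degree_hscm} for condition~1, Proposition~\ref{prop:max_entropy_graphon} for condition~2, and Theorems~\ref{thm:convergence_graphon_entropy} and~\ref{thm:convergence_graph_entropy} for condition~3 of the definition in Section~\ref{sec:entropy-maximization-definition}. Your closing observation that the genuine difficulty is concentrated in Theorem~\ref{thm:convergence_graph_entropy} also matches the paper's own assessment.
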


\subsection{The limit of the degree distribution in the \texttt{HSCM}}

The degree $\D_n$ of a random node $i$ in a random \texttt{HSCM}$(\gamma, \nu)$ graph of size $n$, conditioned on the node coordinates
${\bf x}_n=x_1\ldots x_n$, is the sum of $n - 1$ independent Bernoulli random 
variables with success probabilities $W(x_i,x_j)$, $j\neq i$. The
distribution of this sum can be approximated by the mixed Poisson distribution with the mixing parameter $\sum_{j \neq i} W(x_i, x_j)$.
Therefore after first integrating over $x_j$ with $j \ne i$ and then over $x_i$, the distribution of $\D_n$ is approximately the mixed
Poisson distribution 
\begin{align*}
	\Prob{{\D}_n = k} = \Exp{\frac{(\kappa_n(X))^k}{k!}\,e^{-\kappa_n(X)}}, \quad \text{so that $\Exp{{\D}_n} = \Exp{\kappa_n(X)}$},
\end{align*}
where the random variable $X$ has density $\mu_n$~\eqref{eq:def_hscm_mu_n}, and the mixing parameter $\kappa_n(x)$ is the expected degree of a node at 
coordinate $x$:
\begin{align}
    \kappa_n(x) &= (n-1)w_n(x),\\
	w_n(x) &= \int_{A_n} W(x,y) \, d\mu_n(y),\label{eq:def_expected_degree_function_hscm}
\end{align}
where $A_n$ is given by~\eqref{eq:def_hscm_A_n}.

In the $n\to\infty$ limit, the distribution of the expected degree $\kappa_n(X)$ of $X$ converges to the Pareto distribution with shape $\gamma$ and scale $\nu\beta$.
To prove this, we use the observation that the mass of measure $\mu_n$ is concentrated towards the right end of the interval $A_n=(-\infty, R_n]$, where
$R_n\gg 1$ for large $n$. Therefore, not only the contributions coming from negative $x,y$ are negligible, but we can also
approximate the Fermi-Dirac graphon $W(x,y)$ in~\eqref{eq:def_hscm_graphon} with its classical limit approximation
\begin{equation}\label{eq:calssical_limit_approximation}
  \widehat{W}(x,y) = e^{-(x + y)}
\end{equation}
on $\R_+^2$. In addition, the expected degree function $w_n(x)$ can be approximated with $\widehat{w}_n(x)$ defined by
\begin{align}\label{eq:def_expected_degree_function_approximation_hscm}
	\widehat{w}_n(x) &= \begin{cases}
		\omega_n e^{-x} &\mbox{if } 0 \le x \le R_n\\
		0 &\mbox{otherwise}
	\end{cases}, 
	\text{ where }\\
    \omega_n &= \int_{0}^{R_n} e^{-x} \, d\mu_n(x) = \frac{1-e^{-(\gamma-1) R_n}}{\beta e^{R_n}} =\left(\frac{\nu}{n}\right)^\frac{1}{2}+o\left(n^{-\frac{1}{2}}\right),
\end{align}
so that the expected degree of a node at coordinate $x$ can be approximated by
\begin{align*}
  \widehat{\kappa}_n(x) = n \widehat{w}_n(x) = e^{-x}\left(\left(\nu n\right)^\frac{1}{2}+o\left(n^{\frac{1}{2}}\right)\right).
\end{align*}

To see that $\widehat{\kappa}_n(X)$ converges to a Pareto random variable, note that since $X$ has density $\mu_n$, it follows that for all $t > \nu\beta$
\begin{align*}
	\Prob{\widehat{\kappa}_n(X) > t} &= \Prob{X < \log\frac{n \omega_n}{t}} = e^{-\gamma R_n} \left(\frac{n \omega_n}{t}\right)^{\gamma}\\
	&= \left(\nu\beta\right)^\gamma t^{-\gamma}\left(1 + o(1)\right) = \Prob{Y > t}(1 + o(1)),
\end{align*}
where $Y$ is a Pareto-distributed random variable~\eqref{eq:pareto_tail_cdf}.
We therefore have the following result, the full proof of which can be found 
in Section \ref{sssec:proof_degree_distribution_hscm}:
\begin{theorem}[\texttt{HSCM}$(\gamma, \nu)$ satisfies \eqref{eq:convergence_degree_hscm_constraint}]\label{thm:mixed_poisson_degrees_hscm}
Let $\gamma > 1$, $\nu > 0$, and ${\D}_n$ be the degree of a uniformly chosen vertex in the \texttt{HSCM}$(\gamma, \nu)$ graphs of size $n$.
Then, for each $k=0,1,2,\ldots$,
\begin{align*}
	\lim_{n \to \infty} \Prob{\mathscr{D}_n = k} = \Prob{\mathscr{D} = k},
\end{align*}
where $\Prob{\mathscr{D} = k}$ is given by~\eqref{eq:def_mixed_poisson_degrees}.
\end{theorem}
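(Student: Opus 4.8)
The plan is to obtain $\Prob{\D_n=k}$ by conditioning on the coordinate of the sampled vertex, replacing the resulting binomial law by a Poisson law, and then identifying the limit of the random Poisson rate with the Pareto variable $Y$. First, by exchangeability it suffices to treat the degree of vertex~$1$, whose coordinate $X$ has law $\mu_n$. Conditionally on $X=x$, the remaining $n-1$ coordinates are i.i.d.\ $\mu_n$ and the edge indicators are conditionally independent; integrating out each $x_j$ separately, the indicator of the edge $\{1,j\}$ becomes a Bernoulli variable with success probability $w_n(x)=\int_{A_n}W(x,y)\,d\mu_n(y)$, and the $n-1$ indicators stay independent. Hence $\D_n\mid\{X=x\}$ is \emph{exactly} $\mathrm{Binomial}(n-1,w_n(x))$, so that $\Prob{\D_n=k}=\Exp{b_k(n-1,w_n(X))}$, where $b_k$ is the binomial mass function.

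Next I would carry out the Poisson approximation. Set $g_k(t)=e^{-t}t^k/k!$, the Poisson$(t)$ mass at $k$, which is bounded and continuous on $[0,\infty)$. Le Cam's inequality gives, for each $x$, $d_{\mathrm{TV}}\bigl(\mathrm{Binomial}(n-1,w_n(x)),\mathrm{Poisson}(\kappa_n(x))\bigr)\le (n-1)w_n(x)^2$, with $\kappa_n(x)=(n-1)w_n(x)$. Integrating over $X$,
\begin{equation*}
\bigl|\Prob{\D_n=k}-\Exp{g_k(\kappa_n(X))}\bigr|\le 2(n-1)\int_{A_n} w_n(x)^2\,d\mu_n(x).
\end{equation*}
Since $W(x,y)\le e^{-(x+y)}$ and $\int_{A_n}e^{-y}\,d\mu_n(y)=(\nu/n)^{1/2}$, one has $w_n(x)\le (\nu/n)^{1/2}e^{-x}$, while also $w_n(x)\le 1$; bounding $w_n^2$ by $\min\bigl(1,(\nu/n)e^{-2x}\bigr)$ and splitting the integral at $x^\ast=\tfrac12\log(\nu/n)$ gives $(n-1)\int_{A_n}w_n^2\,d\mu_n=O(n^{1-\gamma})+O(n^{-1})\to 0$ for every $\gamma>1$. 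I expect this to be the main obstacle: controlling the integrated Le Cam error uniformly in $n$, in particular for $\gamma$ close to $1$, where the left tail of $\mu_n$ is heavy and the naive pointwise bound on $(n-1)w_n^2$ does not vanish.

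It then remains to show $\Exp{g_k(\kappa_n(X))}\to\Exp{g_k(Y)}=\Prob{\D=k}$, and since $g_k$ is bounded and continuous this reduces to proving $\kappa_n(X)\Rightarrow Y$. I would replace $W$ by its classical limit $\widehat W(x,y)=e^{-(x+y)}$ and $\kappa_n$ by $\widehat\kappa_n(x)=n\widehat w_n(x)$, controlling $|\kappa_n-\widehat\kappa_n|$ on $[0,R_n]$—where the discrepancy between $1/(e^{x+y}+1)$ and $e^{-(x+y)}$ is $O\bigl(e^{-2(x+y)}\bigr)$—while the negative axis carries negligible mass $\mu_n(\R_-)=(\nu\beta^2/n)^{\gamma/2}\to0$. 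The tail computation already recorded before the statement, $\Prob{\widehat\kappa_n(X)>t}=(\nu\beta)^\gamma t^{-\gamma}(1+o(1))=\Prob{Y>t}(1+o(1))$, then yields $\widehat\kappa_n(X)\Rightarrow Y$ and hence $\kappa_n(X)\Rightarrow Y$.

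Combining the two estimates gives $\lim_{n\to\infty}\Prob{\D_n=k}=\Exp{g_k(Y)}=\Exp{Y^k e^{-Y}/k!}=\Prob{\D=k}$, which is exactly \eqref{eq:def_mixed_poisson_degrees}. The two substantive ingredients are therefore the quantitative Poisson approximation (the integrated Le Cam bound above) and the weak convergence of the mixing rate, with the former being the delicate step because the per-coordinate error is only small after averaging against $\mu_n$.
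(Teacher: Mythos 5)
Your proposal is correct, and it reaches the conclusion by a genuinely different route in the Poissonization step. The paper conditions on \emph{all} coordinates ${\bf X}_n$, so that $D_1$ is a sum of independent but non-identically distributed Bernoullis with parameters $W(X_1,X_j)$; it then couples this sum with a mixed Poisson of random parameter $\sum_{j\ge2}W(X_1,X_j)$ (with coupling error $\sum_j W(X_1,X_j)^2$), and needs a separate lemma on couplings of mixed Poisson variables plus the $L^1$ estimate $(n-1)\Exp{|W(X_1,X_2)-\widehat{w}_n(X_1)|}\to0$ to pass from that random parameter to $n\widehat{w}_n(X_1)$. You instead integrate out the coordinates $X_j$, $j\ge2$, first, observing that conditionally on $X_1=x$ the pairs (coordinate, edge indicator) are i.i.d., so $\D_n\mid\{X_1=x\}$ is \emph{exactly} $\mathrm{Binomial}(n-1,w_n(x))$; a single application of Le Cam's inequality then replaces both of the paper's couplings, and your min-splitting bound on $(n-1)\int w_n^2\,d\mu_n$ handles the heavy left tail of $\mu_n$ more carefully than the paper's displayed bound $n(\int e^{-2x}d\mu_n)^2$, which read literally diverges for $\gamma\le2$ and must be interpreted with $W\le\min\{1,e^{-(x+y)}\}$. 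Your approach is also slightly more economical: by Jensen you only need $\Exp{|w_n(X)-\widehat{w}_n(X)|}=o(1/n)$ rather than the paper's pairwise estimate, and the final passage $\Exp{g_k(\kappa_n(X))}\to\Exp{g_k(Y)}$ via bounded continuous test functions is equivalent to the paper's appeal to convergence of mixed Poisson laws under convergence in distribution of the mixing parameters. The one place you leave work at sketch level is the verification that $\kappa_n(X)-\widehat{\kappa}_n(X)\to0$ in probability; the ingredients you name (the $O(e^{-2(x+y)})$ discrepancy between $W$ and $\widehat{W}$ on $[0,R_n]^2$ and the $O(n^{-\gamma/2})$ mass of $\R_-$) are exactly those of the paper's Lemmas on the classical-limit approximation, so this is routine, but it does require carrying out the integral estimates rather than only citing the orders of magnitude. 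At $\gamma=2$ your Le Cam error acquires an extra $\log n$ factor, which is immaterial since it still vanishes.
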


\subsection{The limit of the expected average degree in the \texttt{HSCM}}

The expected degree of a random node in $n$-sized \texttt{HSCM} graphs is, for any fixed $i$,
\begin{equation}\label{eq:average_degree_hscm}
	\Exp{\mathscr{D}_n} = \sum_{j \neq i} \Exp{W(X_i, X_j)} = (n - 1)\Exp{W(X,Y)},
\end{equation}
where $X$ and $Y$ are independent random variables with distribution $\mu_n$. Approximating $W(x, y)$ with $\widehat{W}(x,y)$ on $\R_+^2$, and using $e^{2R_n} = n/\nu\beta^2$,
we have
\begin{align*}
	n \iint_{0}^{R_n} \widehat{W}(x,y)\, d\mu_n(x)\,d\mu_n(y)
	= n\left(\int_{0}^{R_n} e^{-x} \, d\mu_n(x)\right)^2 
	= \frac{n}{\beta^2e^{2R_n}} \left(1 - e^{-(\gamma - 1)R_n}\right)^2 = \nu + o(1).
\end{align*}

All other contributions are shown to also vanish in the $n\to\infty$ limit in Section \ref{sssec:proof_average_degree_hscm}, where the following theorem is proved:
\begin{theorem}[\texttt{HSCM} satisfies \eqref{eq:convergence_expected_degree_constraint_hscm}]\label{thm:average_degree_hscm}
Let $\gamma > 1$, $\nu > 0$, and ${\D}_n$ be the degree of a uniformly chosen vertex in the \texttt{HSCM}$(\gamma, \nu)$ graphs of size $n$.
Then
\begin{align*}
	\lim_{n \to \infty} \Exp{\D_n} = \nu.
\end{align*}
\end{theorem}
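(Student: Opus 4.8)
The plan is to treat $\Exp{\D_n}=(n-1)\iint_{A_n^2}W(x,y)\,d\mu_n(x)\,d\mu_n(y)$, cf.~\eqref{eq:average_degree_hscm}, by splitting the square of integration $A_n^2=(-\infty,R_n]^2$ into the ``positive'' block $[0,R_n]^2$ and the three remaining blocks in which at least one coordinate is negative. On $[0,R_n]^2$ I would replace $W$ by its classical-limit approximation $\widehat W(x,y)=e^{-(x+y)}$ from~\eqref{eq:calssical_limit_approximation}: the resulting main term is exactly $(n-1)\bigl(\int_0^{R_n}e^{-x}\,d\mu_n(x)\bigr)^2=(n-1)\omega_n^2$, and since $\omega_n=(\nu/n)^{1/2}+o(n^{-1/2})$ by~\eqref{eq:def_expected_degree_function_approximation_hscm}, this already yields $(n-1)\omega_n^2=\nu+o(1)$. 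The entire proof then reduces to showing that the two families of error terms---the block integrals over the negative regions, and the replacement error $\widehat W-W$ on $[0,R_n]^2$---both vanish after multiplication by $(n-1)$.

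The two elementary pointwise bounds I would rely on throughout are $W(x,y)\le e^{-(x+y)}$ and $0\le \widehat W(x,y)-W(x,y)=\tfrac{1}{e^{x+y}(e^{x+y}+1)}\le e^{-2(x+y)}$, both valid for all real $x,y$. The first bound disposes of all negative-coordinate regions at once, since the integrand factorizes: using $\int_{-\infty}^{0}e^{-x}\,d\mu_n(x)=\tfrac1\beta e^{-\gamma R_n}=O(n^{-\gamma/2})$ (finite precisely because $\gamma>1$) together with $\int_0^{R_n}e^{-x}\,d\mu_n(x)=\omega_n=O(n^{-1/2})$, the two mixed blocks each contribute at most $(n-1)\,O(n^{-\gamma/2})\,O(n^{-1/2})=O(n^{(1-\gamma)/2})\to0$, and the doubly-negative block $(-\infty,0]^2$ at most $(n-1)\,O(n^{-\gamma})=O(n^{1-\gamma})\to0$, both again using $\gamma>1$.

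It remains to control the replacement error on the positive block, which is bounded by $(n-1)\bigl(\int_0^{R_n}e^{-2x}\,d\mu_n(x)\bigr)^2$, and this is where the only genuine case distinction appears and constitutes the main obstacle: the integral $\int_0^{R_n}e^{-2x}\,d\mu_n(x)=\gamma e^{-\gamma R_n}\int_0^{R_n}e^{(\gamma-2)x}\,dx$ changes asymptotic order with $\gamma$. For $\gamma>2$ it is $O(e^{-2R_n})=O(n^{-1})$, for $\gamma=2$ it is $O(R_n e^{-2R_n})=O(n^{-1}\log n)$, and for $1<\gamma<2$ the inner integral stays bounded so it is $O(e^{-\gamma R_n})=O(n^{-\gamma/2})$. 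Squaring and multiplying by $(n-1)$ gives $O(n^{-1})$, $O(n^{-1}\log^2 n)$, and $O(n^{1-\gamma})$ respectively---each tending to $0$ since $\gamma>1$. Collecting the main term $\nu+o(1)$ with these vanishing error terms gives $\lim_{n\to\infty}\Exp{\D_n}=\nu$. I expect the bookkeeping of this $\gamma$-dependent estimate to be the only delicate point; everything else is a uniform application of the two pointwise bounds above.
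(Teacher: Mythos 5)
Your proposal is correct and follows essentially the same route as the paper: reduce $\Exp{\D_n}=(n-1)\Exp{W(X,Y)}$ to the positive block $[0,R_n]^2$, evaluate the main term $(n-1)\omega_n^2=\nu+o(1)$ there, and control the replacement error via the pointwise bound $\widehat W-W\le e^{-2(x+y)}$ with exactly the paper's three-case analysis in $\gamma$ (this is the content of Lemma~\ref{lem:integration_W_W_hat} and Proposition~\ref{prop:concentration_W_W_hat}). Your only deviation is cosmetic: you dispose of the negative-coordinate blocks by the direct factorization $W(x,y)\le e^{-x}e^{-y}$ rather than the paper's sub-splitting of $(-\infty,0]$ at $-R_n$, which yields the same $O(n^{(1-\gamma)/2})$ bound slightly more cleanly.
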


\subsection{\texttt{HSCM} maximizes graphon entropy}

Let $A \subseteq \R$ be some interval, $\mu$ a measure on $A$ with $\mu(A) < \infty$, and suppose some $\mu$-integrable function $w : A \to \R$ is given.
Consider the graphon entropy maximization problem under the constraint
\begin{equation}\label{eq:def_graphon_entropy_constraint}
	w(x) = \int_A W(x,y) \,d\mu(y).
\end{equation}
That is, the problem is to find a symmetric function $W^*$ that maximizes graphon entropy $\sigma[W,\mu,A]$ in~\eqref{eq:def_graphon_entropy}
and satisfies the constraint above, for fixed $A$ and $\mu$.

We note that this problem is a ``continuous version'' of the Gibbs entropy maximization problem in the SCM
ensemble in Section~\ref{sec:SCM}. The following proposition,
which we prove in Section \ref{sssec:proof_max_entropy_graphon}, states that the solution to this problem
is a ``continuous version'' of the SCM solution~\eqref{eq:def_scm_lambda}:
\begin{proposition}[\texttt{HSCM} Maximizes Graphon Entropy]\label{prop:max_entropy_graphon}
Suppose there exists a solution $W^*$ to the graphon entropy maximization problem defined above.
Then this solution has the following form:
\begin{equation}\label{eq:def_max_entropy_graphon_solution}
	W^*(x,y) = \frac{1}{e^{\lambda(x) + \lambda(y)} + 1},
\end{equation}
where function $\lambda$ is $\mu$-almost-everywhere uniquely defined on $A$ by~\eqref{eq:def_graphon_entropy_constraint}.
\end{proposition}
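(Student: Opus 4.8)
The plan is to solve this as a constrained optimization problem over the space of symmetric measurable functions $W : A \times A \to [0,1]$, maximizing the concave functional $\sigma[W,\mu,A] = \iint_{A^2} H(W(x,y))\,d\mu(x)\,d\mu(y)$ subject to the family of constraints $\int_A W(x,y)\,d\mu(y) = w(x)$ indexed by $x \in A$. Since the statement already assumes existence of a maximizer $W^*$, I only need to characterize its form, so the natural tool is Lagrange multipliers. First I would introduce a multiplier function $\lambda : A \to \R$ (one scalar for each constraint, hence a function) and form the Lagrangian
\begin{equation*}
  \mathcal{L}[W] = \iint_{A^2} H(W(x,y))\,d\mu(x)\,d\mu(y) - \int_A 2\lambda(x)\left(\int_A W(x,y)\,d\mu(y) - w(x)\right)d\mu(x),
\end{equation*}
where the factor $2$ and the symmetrization account for each unordered pair being counted twice. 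The key observation is that $H$ is strictly concave on $[0,1]$, so $\sigma$ is strictly concave in $W$; this guarantees that the stationary point is the unique global maximizer, which is what delivers the almost-everywhere uniqueness claimed in the proposition.

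Next I would take the first variation. Perturbing $W \to W + t\eta$ for an arbitrary bounded symmetric $\eta$ and differentiating at $t=0$, the pointwise derivative of $H$ is $H'(p) = \log\frac{1-p}{p}$, so stationarity of $\mathcal{L}$ forces, for $\mu\times\mu$-almost every $(x,y)$,
\begin{equation*}
  \log\frac{1 - W^*(x,y)}{W^*(x,y)} = \lambda(x) + \lambda(y).
\end{equation*}
Here the symmetric splitting of the constraint term is what makes the multiplier contribution come out as the symmetric sum $\lambda(x)+\lambda(y)$ rather than something asymmetric. Solving this pointwise equation for $W^*(x,y)$ gives exactly
\begin{equation*}
  W^*(x,y) = \frac{1}{e^{\lambda(x)+\lambda(y)} + 1},
\end{equation*}
which is the claimed Fermi--Dirac form~\eqref{eq:def_max_entropy_graphon_solution}. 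I would then substitute this back into the constraint~\eqref{eq:def_graphon_entropy_constraint} to see that $\lambda$ is determined (implicitly) by requiring $\int_A (e^{\lambda(x)+\lambda(y)}+1)^{-1}\,d\mu(y) = w(x)$ for $\mu$-almost every $x$, and argue that strict concavity together with the monotonicity of $p \mapsto H'(p)$ pins down $\lambda$ up to a $\mu$-null set.

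The main obstacle I anticipate is the rigorous justification of the variational step rather than the formal computation, which is routine. Specifically, one must ensure that admissible perturbations $\eta$ stay within the constraint set and keep $W^* + t\eta \in [0,1]$, and one must handle the boundary behavior where $W^*$ could a priori take the values $0$ or $1$ (where $H'$ blows up and the Euler--Lagrange equation degenerates). Because the problem is an infinite-dimensional constrained optimization, I would either invoke a suitable version of the Lagrange multiplier theorem in function space, or—more cleanly—bypass the multiplier machinery entirely by a direct convexity argument: given any competitor $W'$ satisfying the same constraint, compare $\sigma[W']$ to $\sigma[W^*]$ using the tangent-line (Gibbs) inequality for the strictly concave $H$ at the point $W^*$, where the linear term integrates against the constraint and cancels identically for both $W'$ and $W^*$. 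This direct approach sidesteps the differentiability technicalities and simultaneously yields both optimality and uniqueness, so I would favor it, relegating the Lagrangian derivation to motivation.
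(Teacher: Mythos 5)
Your primary route---Lagrange multipliers, symmetrization of the constraint term so the multiplier enters as $\lambda(x)+\lambda(y)$, and the first variation $H'(W)=\log\frac{1-W}{W}$ yielding the Fermi--Dirac form---is exactly the paper's proof, and the obstacle you flag (perturbations must keep $W+t\eta$ in $[0,1]$, and $H'$ blows up at the boundary) is precisely the technicality the paper addresses: it replaces $H$ by a stretched version $H_\delta(x)=H((1-\delta)x+\delta/2)$ defined on a slightly larger interval, solves the regularized Euler--Lagrange equation, checks the solution $W_\delta$ lies in $[0,1]$, and lets $\delta\to0$; it also realizes the multiplier as an $L^\infty$ function via the Riesz representation theorem and proves $\mu$-a.e.\ uniqueness of $\lambda$ by a short linearity argument. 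Your proposed alternative---the tangent-line inequality $H(W')\le H(W^*)+H'(W^*)(W'-W^*)$ with the linear term cancelling against the shared constraint---is genuinely different and does give a cleaner, fully rigorous proof that a Fermi--Dirac graphon \emph{satisfying the constraint} is the unique maximizer. But note it does not by itself prove the proposition as stated: the proposition assumes only that \emph{some} maximizer $W^*$ exists and asserts it must have the form \eqref{eq:def_max_entropy_graphon_solution}, so the convexity argument needs as input a function $\lambda$ for which the Fermi--Dirac graphon meets constraint \eqref{eq:def_graphon_entropy_constraint}; producing that $\lambda$ from an arbitrary maximizer is exactly what the variational step accomplishes, so you cannot wholly relegate it to motivation. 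With that caveat, your plan is sound and, in its main line, matches the paper's.
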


This proposition proves that for each $n$, the~\texttt{HSCM}$(\gamma,\nu)$'s Fermi-Dirac graphon~\eqref{eq:def_hscm_graphon}
maximizes the graphon entropy under constraint~\eqref{eq:def_expected_degree_function_hscm},
because $\mathbf{A}$ and $\mu$ in the \texttt{HSCM}$(\gamma,\nu)$ are chosen such that $\lambda(x)=x$.
This is always possible as soon as $\lambda(x)$ is invertible, cf.~Section~\ref{sec:dense_HSCM}.
For each $n$, interval $A_n$~\eqref{eq:def_hscm_A_n} and measure $\mu_n$~\eqref{eq:def_hscm_mu_n} in the \texttt{HSCM}$(\gamma,\nu)$
can be mapped to $[0,1]$ and $1$, respectively, in which case $\lambda_n(x)=R_n+(1/\gamma)\log x$, leading to~\eqref{eq:W_n}.
In other words, node coordinates $x$ in the original \texttt{HSCM}$(\gamma,\nu)$ definition in Section~\ref{sec:HSCM}, 
and their coordinates $\tilde{x}$ in its equivalent definition with $A_n=[0,1]$ and $\mu_n=1$ are related by
\begin{align*}
  \tilde{x}=e^{\gamma(x-R_n)}.
\end{align*}

\subsection{Graphon entropy scaling and convergence}

To derive the rate of convergence of $\texttt{HSCM}$'s graphon entropy to zero, it suffices
to consider the classical limit approximation $\widehat{W}$~\eqref{eq:calssical_limit_approximation} to~$W$~\eqref{eq:def_hscm_graphon} on $\R_+^2$.
Its Bernoulli entropy~\eqref{eq:def_p_entropy} is
\begin{align*}
	H\left(\widehat{W}(x,y)\right) = (x + y)\widehat{W}(x,y) - (1 - \widehat{W}(x,y))\log(1 - \widehat{W}(x,y)).
\end{align*}
Since the most of the mass of $\mu_n$ is concentrated near $R_n$ and $\widehat{W}(R_n, R_n) \to 0$, the second term
is negligible. Integrating the first term over $[0, R_n]$, we get 
\begin{align*}
	&\iint_0^{R_n} (x + y) \widehat{W}(x,y) \, d\mu_n(x)\,d\mu_n(y)
	= \gamma^2  e^{-2\gamma R_n} \iint_0^{R_n} (x + y) e^{(\gamma - 1)(x + y)} \, dx\,dy \\
	&= \frac{2 R_n }{\beta^2 e^{2R_n}} + O(n^{-1})
	= \frac{\nu}{n}\log\frac{n}{\nu\beta^2} + O(n^{-1})
	= \nu\,\frac{\log n}{n} + O(n^{-1}), \numberthis \label{eq:convergence_approx_graphon_entropy}
\end{align*}
from which we obtain the proper scaling as $\log(n)/n$. All further details behind the proof of the following theorem are in Section~\ref{sssec:proof_convergence_graphon_entropy}.
\begin{theorem}[Graphon Entropy Convergence]\label{thm:convergence_graphon_entropy}
Let $\sigma[G_n]$ be the graphon entropy~\eqref{eq:def_graphon_entropy_n} in the \texttt{HSCM}$(\gamma,\nu)$ ensemble with any $\gamma > 1$ and $\nu>0$.
Then, as $n \to \infty$,
\begin{align*}
	\left|\frac{n\,\sigma[G_n]}{\log n}- \nu\right| = O\left(1/\log n\right).
\end{align*}
\end{theorem}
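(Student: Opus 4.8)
The plan is to prove the sharper statement $\sigma[G_n]=\nu\,\frac{\log n}{n}+O(1/n)$, from which the theorem follows at once: multiplying by $n/\log n$ gives $n\sigma[G_n]/\log n=\nu+(n/\log n)\,O(1/n)=\nu+O(1/\log n)$. The starting point is to split the defining double integral \eqref{eq:def_graphon_entropy_n} over $A_n^2=(-\infty,R_n]^2$ into three pieces: a \emph{main} term obtained by restricting to $[0,R_n]^2$ and replacing the Fermi--Dirac graphon $W$ \eqref{eq:def_hscm_graphon} by its classical-limit approximation $\widehat W$ \eqref{eq:calssical_limit_approximation}; an error $E_1$ measuring that replacement on $[0,R_n]^2$; and a remainder $E_2$ collecting the contribution of the negative-coordinate strips $A_n^2\setminus[0,R_n]^2$. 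I would then show the main term equals $\nu\log n/n+O(1/n)$ and that $E_1=O(1/n)$ and $E_2=O(1/n)$.

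For the main term I use $H(\widehat W)=(x+y)\widehat W-(1-\widehat W)\log(1-\widehat W)$. Integrating the first summand against $\mu_n$ gives exactly $\nu\log n/n+O(1/n)$; this is the computation already recorded in \eqref{eq:convergence_approx_graphon_entropy}, which I quote. For the second summand I use the elementary inequality $0\le-(1-p)\log(1-p)\le p$ (valid because $p+(1-p)\log(1-p)$ vanishes at $p=0$ and has derivative $-\log(1-p)\ge0$), so its integral is squeezed between $0$ and $\iint_{[0,R_n]^2}\widehat W\,d\mu_n\,d\mu_n=\omega_n^2$, where $\omega_n=(\nu/n)^{1/2}+o(n^{-1/2})$ from \eqref{eq:def_expected_degree_function_approximation_hscm}. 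Hence this summand is $O(1/n)$ and is harmlessly absorbed into the stated error.

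For $E_1$ I bound $|H(W)-H(\widehat W)|$ by the mean value theorem. On $[0,R_n]^2$, writing $s=e^{x+y}\ge1$ one has $|W-\widehat W|=\frac{1}{s(s+1)}\le e^{-2(x+y)}$, while $H'$ is decreasing with $H'(W)=\log\frac{1-W}{W}=x+y$, so $|H(W)-H(\widehat W)|\le (x+y)\,e^{-2(x+y)}$ whenever $x+y\ge1$. Integrating this against $\mu_n\,d\mu_n$ yields a prefactor $e^{-2\gamma R_n}\iint(x+y)e^{(\gamma-2)(x+y)}\,dx\,dy$, and a short case analysis ($\gamma<2$, $\gamma=2$, $\gamma>2$) shows each case is $o(1/n)$ for $\gamma>1$. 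Near the origin, where $x+y<1$ and the bound degenerates (indeed $\widehat W(0,0)=1$), I instead use the crude estimate $|H(W)-H(\widehat W)|\le2\log2$ on a region of $\mu_n$-product measure at most $\mu_n([0,1])^2=O(n^{-\gamma})=o(1/n)$.

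The step I expect to be the real obstacle is $E_2$, the negative-coordinate contribution. The naive bound $H(W)\le\log2$ times $\mu_n(\{x<0\})=e^{-\gamma R_n}=O(n^{-\gamma/2})$ is \emph{not} sufficient when $1<\gamma<2$, since there $n^{-\gamma/2}\gg1/n$. The remedy is to exploit the decay of $H(W(x,y))$ away from the line $x+y=0$: using $H(W)=H(1-W)$ together with the same inequality $-(1-p)\log(1-p)\le p$ gives $H(W(x,y))\le C(1+|x+y|)e^{-|x+y|}$ for all $x,y$. Substituting $t=|x+y|$ and performing the $y$-integral first, splitting according to the sign of $x+y$ and to whether the peak at $y\approx-x$ lies inside $A_n$, the growing weight $e^{\gamma y}$ of $\mu_n$ concentrates the mass; after integrating the remaining variable $x<0$ against $\mu_n$, each resulting term carries a prefactor of order $R_n\,e^{-2\gamma R_n}$ or $e^{-(\gamma+1)R_n}$, both of which are $o(1/n)$ precisely because $\gamma>1$. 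Collecting the three pieces gives $\sigma[G_n]=\nu\log n/n+O(1/n)$, and hence the theorem.
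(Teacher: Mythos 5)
Your proposal is correct and follows essentially the same route as the paper: restrict to $[0,R_n]^2$, replace $W$ by the classical-limit $\widehat W$, compute the main term from $(x+y)\widehat W(x,y)$ as in \eqref{eq:convergence_approx_graphon_entropy}, and control the remainder $-(1-\widehat W)\log(1-\widehat W)$ by the inequality $-(1-p)\log(1-p)\le p$ together with a mean-value-theorem bound on $|H(W)-H(\widehat W)|$ --- precisely the content of the paper's Lemma~\ref{lem:integration_H_W_W_hat} and Propositions~\ref{prop:concentration_Gibbs_W_W_hat}. The only cosmetic difference is your handling of the negative-coordinate strips via the uniform bound $H(W(x,y))\le C(1+|x+y|)e^{-|x+y|}$ (exploiting $H(W)=H(1-W)$), where the paper instead splits at a fixed threshold $\delta$ and uses $-(1-e^{-z})\log(1-e^{-z})\le e^{-2z}$ for large $z$; both yield errors that are $o(1/n)$ for $\gamma>1$.
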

This theorem implies that $\sigma[G_n]$ goes to zero as $\log(n)/n$, while $n\,\sigma[G_n]/\log n$
goes to $\nu$ as $1/\log n$. 

\subsection{Gibbs entropy scaling and convergence}\label{ssec:convergence_graph_entropy}

The last part of Theorem~\ref{thm:main_result} is to prove that the rescaled Gibbs entropy~\eqref{eq:rescaled_gibbs_entropy} of the \texttt{HSCM} converges to
its graphon entropy~\eqref{eq:def_graphon_entropy_n} faster than the latter converges to zero.

The graphon entropy is a trivial lower bound for the rescaled Gibbs entropy, Section~\ref{sec:bernoulli_graphon_entropies},
so the problem is to find an appropriate upper bound for the latter converging to the graphon entropy. To identify such an upper bound,
we rely on an argument similar to~\cite{janson2013graphons}. Specifically, we first partition $A_n$ into $m$ intervals $I_{t}$ 
that induce a partition of $A_n^2$ into rectangles $I_{st}=I_s\times I_t$, $s,t=1\ldots m$.
We then approximate the graphon by its average value on each rectangle. Such approximation brings in an error term on each rectangle. We then show that the Gibbs entropy
is upper-bounded by the entropy of the averaged graphon, plus the sum of entropies of indicator random variables $M_i$ which take value $t$ if the coordinate
$x_i$ of node $i$ happens to fall within interval $I_t$. The smaller the number of intervals $m$, the smaller the total entropy of these random variables $M_i$,
but the larger the sum of the error terms coming from graphon averaging, because rectangles $I_{st}$ are large. The smaller they are, the smaller the total error term,
but the larger the total entropy of the $M_i$'s. The crux of the proof is to find a ``sweet spot''---the right number of intervals guaranteeing the proper
balance between these two types of contributions to the upper bound, which we want to be tighter than the rate of the convergence of the graphon entropy to zero.

This program is executed in Section~\ref{ssec:proof_convergence_graph_entropy}, where we prove the following theorem:
\begin{theorem}[Gibbs Entropy Convergence]\label{thm:convergence_graph_entropy}
Let $\sigma[G_n]$ be the graphon entropy~\eqref{eq:def_graphon_entropy_n} and $\mathcal{S}^\ast[G_n]$ be the rescaled Gibbs entropy~\eqref{eq:rescaled_gibbs_entropy}
in the \texttt{HSCM}$(\gamma,\nu)$ ensemble with any $\gamma > 1$ and $\nu>0$.
Then
\begin{align*}
&	\lim_{n \to \infty} \frac{n}{\log n}\left|\mathcal{S}^\ast[G_n] - \sigma[G_n]\right| = 0, \text{ and}\\
&	\lim_{n \to \infty} \frac{2\,\mathcal{S}[G_n]}{n \log n } = \nu.
\end{align*}
\end{theorem}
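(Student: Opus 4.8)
The plan is to sandwich the rescaled Gibbs entropy between the graphon entropy and a matching upper bound. The lower bound $\mathcal{S}^\ast[G_n]\ge\sigma[G_n]$ is already supplied by~\eqref{eq:gibbs-bernoulli-graphon-entropy}, so $|\mathcal{S}^\ast[G_n]-\sigma[G_n]|=\mathcal{S}^\ast[G_n]-\sigma[G_n]$ and the whole task is to produce an upper bound exceeding $\sigma[G_n]$ by only $o(\log n/n)$. Following the discretization idea of~\cite{janson2013graphons}, I would partition $A_n$ into $m=m(n)$ consecutive intervals $I_1,\dots,I_m$ (equal width $h=R_n/m$ on the bulk $[0,R_n]$, plus a single cell for the negligible tail $(-\infty,0]$), inducing rectangles $I_{st}=I_s\times I_t$, and record by $M_i$ the interval containing the coordinate $x_i$ of node $i$. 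Since the $x_i$ are i.i.d.\ and $G_n$ is obtained from $\mathbf{x}_n$ by independent coin flips, the chain rule and subadditivity of Shannon entropy give
\[
\mathcal{S}[G_n]\le\mathcal{S}[\mathbf{M}_n]+\mathcal{S}[G_n\mid\mathbf{M}_n]\le n\log m+\binom{n}{2}\,\sigma[\overline{W}_n],
\]
where $\overline{W}_n$ is the graphon equal on each $I_{st}$ to the $\mu_n$-average $\overline{W}_{st}$ of $W$: conditioned on $M_i=s$ and $M_j=t$ the edge $(i,j)$ is Bernoulli with parameter $\overline{W}_{st}$, so $\mathcal{S}[G_n\mid\mathbf{M}_n]\le\sum_{i<j}\mathcal{S}[G_{ij}\mid M_i,M_j]=\binom{n}{2}\,\sigma[\overline{W}_n]$. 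Dividing by $\binom{n}{2}$ yields
\[
0\le\mathcal{S}^\ast[G_n]-\sigma[G_n]\le\bigl(\sigma[\overline{W}_n]-\sigma[G_n]\bigr)+\frac{2\log m}{n-1}.
\]

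The two right-hand terms pull $m$ in opposite directions, and the core of the proof is to bound the averaging error $\sigma[\overline{W}_n]-\sigma[G_n]=\sum_{s,t}\bigl(\mu_n(I_s)\mu_n(I_t)H(\overline{W}_{st})-\iint_{I_{st}}H(W)\,d\mu_n\,d\mu_n\bigr)$, which is nonnegative by concavity of $H$. On each bulk rectangle I would use the second-order estimate $H(\overline{W}_{st})-\overline{H(W)}_{st}\le\tfrac12\sup_{I_{st}}|H''|\cdot\mathrm{Var}_{st}(W)$ with $H''(p)=-1/(p(1-p))$. The saving grace is that, writing $u=x+y$, we have $|dW/du|=W(1-W)$ and $\bigl|\tfrac{d}{du}\log(W(1-W))\bigr|=|1-2W|\le1$, so over a cell of width $h=O(1)$ in each coordinate $W(1-W)$ oscillates by only a bounded factor; hence $\mathrm{Var}_{st}(W)$ is $O\bigl((h\,W(1-W))^2\bigr)$ while $\sup|H''|$ is $O\bigl(1/(W(1-W))\bigr)$, giving a per-cell error $O\bigl(h^2\,W(1-W)\bigr)$ uniformly. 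Summing against the cell weights and using $W(1-W)\le W$ together with the average-degree estimate $\iint W\,d\mu_n\,d\mu_n=\nu/n+o(1/n)$ of Theorem~\ref{thm:average_degree_hscm} gives $\sigma[\overline{W}_n]-\sigma[G_n]=O\bigl(h^2/n\bigr)=O\bigl((R_n/m)^2/n\bigr)$.

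It then remains to pick the sweet spot. Taking $m=\lceil R_n\rceil$ makes $h=R_n/m=O(1)$, so the averaging error is $O(1/n)$ while the discretization term is $2\log m/(n-1)=O(\log\log n/n)$; multiplying by $n/\log n$ sends both to zero, which proves the first limit $\tfrac{n}{\log n}|\mathcal{S}^\ast[G_n]-\sigma[G_n]|\to0$. The second limit is then immediate: since $\mathcal{S}[G_n]=\binom{n}{2}\mathcal{S}^\ast[G_n]$ we have $\tfrac{2\mathcal{S}[G_n]}{n\log n}=\tfrac{n-1}{\log n}\,\mathcal{S}^\ast[G_n]$, and combining the first limit with the graphon-entropy asymptotics $n\sigma[G_n]/\log n\to\nu$ of Theorem~\ref{thm:convergence_graphon_entropy} yields $\tfrac{2\mathcal{S}[G_n]}{n\log n}\to\nu$.

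The main obstacle I anticipate is twofold, and both parts live in the averaging error, which must be forced below $o(\log n/n)$ even though $\sigma[G_n]$ itself is only of order $\log n/n$, leaving essentially no slack. First, where $W$ is tiny (near $x,y\approx R_n$, which carries most of the $\mu_n$-mass) the curvature $|H''|\sim1/W$ blows up and must be tamed by the equally small variance; this is what forces cells of bounded rather than shrinking width, and it is precisely the compatibility of this requirement with the need for $\log m=o(\log n)$ that makes the window of admissible $m$, from order $\log n$ up to $n^{o(1)}$, nonempty. Second, on the far-negative tail $x+y\ll0$ where $W\approx1$, the generous bound $H\le\log2$ is too weak for $1<\gamma<2$, and one must instead exploit the genuine decay $H(W)\sim|x+y|\,e^{x+y}$ there (equivalently, reduce to the classical-limit approximation $\widehat{W}=e^{-(x+y)}$ on $\R_+^2$ and estimate the complementary region separately), which contributes only $O(n^{-\gamma}\log n)=o(1/n)$ and is therefore harmless after rescaling.
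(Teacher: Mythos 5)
Your proposal is correct and shares the paper's overall architecture --- the lower bound from~\eqref{eq:gibbs-bernoulli-graphon-entropy}, the conditioning on the membership vector $\mathbf{M}_n$ giving $\mathcal{S}[G_n]\le n\log m+\binom{n}{2}\sigma[\overline{W}_n]$, and the balancing of partition entropy against averaging error --- but it executes the key estimate differently. The paper controls the averaging error to \emph{first} order: via the mean value theorem it shows $|H(W(x,y))-H(\widetilde{W}_n(x,y))|\le 3(|I_s|+|I_t|)H(W(x,y))$, a \emph{relative} error proportional to the cell width, which forces shrinking cells of width $O(1/\log n)$ (hence $m_n=\lceil\log^2 n\rceil+1$ cells) so that the total error is $O(\sigma[G_n]/\log n)=O(1/n)$. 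You instead control it to \emph{second} order via the Jensen gap $\tfrac12\sup|H''|\cdot\mathrm{Var}_{st}(W)$, exploiting the same structural identities ($|dW/du|=W(1-W)$ with $u=x+y$, and the bounded logarithmic derivative of $W(1-W)$) to get a per-cell error $O(h^2W(1-W))$ whose integral is controlled by $\iint W\,d\mu_n^2=O(1/n)$ rather than by $\sigma[G_n]=O(\log n/n)$; this buys you constant-width cells and only $m=O(\log n)$ of them. Both routes land the averaging error at $O(1/n)$ and the partition entropy at $O(\log\log n)$ per node, so both close the argument; yours is arguably the sharper per-cell estimate, the paper's the more elementary one.

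One point needs more care than your sketch provides: with your tail cell $I_1=(-\infty,0]$, whose $\mu_n$-mass is $e^{-\gamma R_n}=\Theta(n^{-\gamma/2})$, the crude bound $H\le\log 2$ on rectangles meeting $I_1$ is insufficient for $1<\gamma<2$, as you note --- but the quantity you must then control is $\sum_t\mu_n(I_1)\mu_n(I_t)H(\overline{W}_{1t})$ for the \emph{averaged} graphon, which by concavity dominates $\iint_{I_1\times A_n}H(W)\,d\mu_n^2$ and is therefore not covered by a Lemma~\ref{lem:integration_H_W_W_hat}--type bound on $H(W)$ alone; a short direct computation (the average $\overline{W}_{1t}$ is $\Theta(e^{-\bar y_t})$ because $\mu_n$ on $(-\infty,0]$ concentrates near $0$, giving a total contribution $O(R_n\,n^{-(\gamma+1)/2})=o(1/n)$) closes this, or one can simply move the tail cell to $(-\infty,-R_n]$ as the paper does, whose mass $O(n^{-\gamma})$ makes $H\le\log 2$ sufficient for all $\gamma>1$.
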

We remark that this theorem implies that 
\begin{align*}
  \mathcal{S}[G_n] \sim \frac{\nu}{2}\,n \log n,
\end{align*}	
which is the leading term of the (S)CM Gibbs entropy obtained in~\cite{anand2014entropy}. 
It is also instructive to compare this scaling of Gibbs entropy with its scaling in dense ensembles
with $\lim_{n\to\infty}\sigma[G_n]=\sigma[W,\mu,A]=\sigma\in(0,\infty)$~\cite{janson2013graphons}:
\begin{align*}
	\mathcal{S}[G_n] \sim \frac{\sigma}{2}\,n^2.
\end{align*}	

Finally, it is worth mentioning that even though we use the Fermi-Dirac graphon $W$~\eqref{eq:def_hscm_graphon} to define our $W$-random graphs,
the same convergence results could be obtained for $W$-random graphs defined by any other graphon
$W^\prime$ such that
\begin{align*}
	\lim_{n \to \infty} n \, \Exp{\left|W(X,Y)- W^\prime(X,Y)\right|} = 0,
\end{align*}
with $X$ and $Y$ having density $\mu_n$. In fact, to establish the required limits, we use the classical limit approximation graphon $\widehat{W}$ instead of $W$.
Therefore there exists a vast equivalence class of $W$-random graphs defined by graphons $W^\prime$ that all have the same limit degree distribution~\eqref{eq:convergence_degree_hscm_constraint}
and average degree~\eqref{eq:convergence_expected_degree_constraint_hscm}, and whose rescaled Gibbs entropy converges to the graphon entropy of the Fermi-Dirac~$W$.
However, it follows from Proposition~\ref{prop:max_entropy_graphon} that among all these ensembles, only the $W$-random graph ensemble defined by the Fermi-Dirac
graphon~\eqref{eq:def_hscm_graphon} uniquely maximizes the graphon entropy~\eqref{eq:def_graphon_entropy} for each $n$, which, by our definition of maximum-entropy
ensembles under hypersoft constraints, is a necessary condition for graph entropy maximization.

\section{Proofs}\label{sec:proofs}

In this section we provide the proofs of all the results stated in the previous section. In Section \ref{ssec:graphon_approximations} we begin with some preliminary results on the accuracy of the approximation of the Fermi-Dirac graphon $W$~\eqref{eq:def_hscm_graphon} by the classical limit approximation $\widehat{W}$. In the same section we also establish results showing that the main contribution of the integration with respect to $\mu_n$ is on the positive part $[0, R_n]$ of the interval $A_n$ as defined by \eqref{eq:def_hscm_A_n}. In particular, we show that all contributions coming from the negative part of this interval, i.e., $\R_-$, are $o(n^{-1})$, which means that for all our results the negative part of the support of our measure $\mu_n$ is negligible. 
We then proceed with proving Theorems \ref{thm:mixed_poisson_degrees_hscm}
and \ref{thm:average_degree_hscm} in Section~\ref{ssec:proofs_degrees_hscm}. The proofs of Proposition \ref{prop:max_entropy_graphon} and Theorem \ref{thm:convergence_graphon_entropy}
can be found in Section~\ref{ssec:proof_graphon_entropy}. Finally, the convergence of the rescaled Gibbs entropy to the graphon entropy (Theorem \ref{thm:convergence_graph_entropy}) is given in Section~\ref{ssec:proof_convergence_graph_entropy}.

\subsection{The classical limit approximation of the Fermi-Dirac graphon}\label{ssec:graphon_approximations}

We will use $e^{-(x + y)}$ as an approximation to the graphon $W$ to compute all necessary limits. To be precise we define
\begin{equation}\label{eq:def_approx_graphon}
	\widehat{W}(x,y) = \min\{e^{-(x + y)},1\}
\end{equation}
and show that differences between the integrals of $W$ and $\widehat{W}$ converge to zero as $n$ tends to infinity. 
Note that, instead of $\widehat{W}$, we could have also worked with the integral expressions involving $W$, which might have led to 
better bounds. However, these integrals tend to evaluate to combinations of hypergeometric functions, while the integrals of 
$\widehat{W}$ are much easier to evaluate and are sufficient for our purposes. 

By the definition of $\widehat{W}(x,y)$ we need to consider separately, the intervals $(-\infty, 0]$ and $(0, R_n]$.
Since graphons are symmetric functions, this leads to the following three different cases:
\begin{equation}\label{eq:def_spliting_A_n}
	\textrm{I)} \quad -\infty < x, y \le 0 \qquad \textrm{II)} \quad -\infty < y \le R_n \text{ and } 0 < x \le R_n, \qquad \textrm{III)} \quad 0 < x, y \le R_n.
\end{equation}
For case I) we note that $W, \, \widehat{W} \le 1$ and 
\begin{equation}\label{eq:mu_n_R_n_0}
	\iint_{-\infty}^0 \, d\mu_n(y) \, d\mu_n(x) = O\left(n^{-\gamma}\right).
\end{equation}
With this we obtain the following result, which shows that for both $W$ and $\widehat{W}$, only the
integration over $(0, R_n]^2$, i.e.\ case III, matters.

\begin{lemma}\label{lem:integration_W_W_hat}
\begin{align*}
	\iint_{-\infty}^{R_n} \widehat{W}(x,y) \, d\mu_n(y) \, d\mu_n(x)
	- \iint_{0}^{R_n} \widehat{W}(x,y) \, d\mu_n(y) \, d\mu_n(x) = O\left(n^{-\frac{\gamma + 1}{2}}\right)
\end{align*}
and the same result holds if we replace $\widehat{W}$ with $W$.
\end{lemma}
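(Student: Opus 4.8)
The goal is to show that, for both $\widehat{W}$ and $W$, the integral over the full square $(-\infty,R_n]^2$ differs from the integral over the positive square $(0,R_n]^2$ only by a term of order $n^{-(\gamma+1)/2}$. The plan is to exploit the symmetry of the graphon together with the three-region decomposition~\eqref{eq:def_spliting_A_n}, reducing everything to bounding the contributions from regions I and II, and then to estimate those contributions using the explicit form of $\mu_n$~\eqref{eq:def_hscm_mu_n} and the trivial bound $0\le \widehat{W},W\le 1$.

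First I would write the difference of the two double integrals as the integral of the same integrand over the complement $(-\infty,R_n]^2\setminus(0,R_n]^2$, which by symmetry of the integrand decomposes into (twice) region II plus region I, i.e.\ the difference equals the mass placed on $\{x\le 0\}$ in one coordinate. The key quantity is therefore the total measure of the negative part, $\mu_n(-\infty,0]$. Using $\mu_n = \gamma\,e^{\gamma(x-R_n)}$ one computes $\mu_n(-\infty,0]=e^{-\gamma R_n}$, and since $e^{R_n}=\sqrt{n/(\nu\beta^2)}$ this is of order $n^{-\gamma/2}$; this is exactly the estimate~\eqref{eq:mu_n_R_n_0} applied in a single coordinate. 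For region I (both coordinates negative) the crude bound $\widehat{W}\le 1$ gives a contribution bounded by $\mu_n(-\infty,0]^2=O(n^{-\gamma})$, which is already smaller than the claimed order and hence negligible.

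The main work is region II, where $x\in(0,R_n]$ and $y\in(-\infty,0]$ (and its symmetric twin). Here I cannot simply bound $\widehat{W}$ by $1$ in both variables, since that would only yield $O(n^{-\gamma/2})$, which is too weak. Instead I would keep the decay in the positive variable: bound the region-II contribution by
\begin{align*}
\int_0^{R_n}\!\!\int_{-\infty}^0 \widehat{W}(x,y)\,d\mu_n(y)\,d\mu_n(x)
\le \mu_n(-\infty,0]\int_0^{R_n} e^{-x}\,d\mu_n(x),
\end{align*}
using $\widehat{W}(x,y)\le e^{-x}$ for $x>0$ together with $e^{-y}\le\ldots$ controlled by the integral over $y$. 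The inner positive integral $\int_0^{R_n}e^{-x}\,d\mu_n(x)=\omega_n$ is $O(n^{-1/2})$ by~\eqref{eq:def_expected_degree_function_approximation_hscm}, and multiplied by $\mu_n(-\infty,0]=O(n^{-\gamma/2})$ this yields $O(n^{-(\gamma+1)/2})$, exactly the claimed rate. I expect this balancing—retaining the $e^{-x}$ decay in the positive coordinate rather than discarding it—to be the crux of the estimate, since it is what upgrades the naive $n^{-\gamma/2}$ bound to $n^{-(\gamma+1)/2}$.

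Finally, to transfer the result from $\widehat{W}$ to $W$, I would observe that the same bounds apply verbatim: on region II the genuine Fermi-Dirac graphon satisfies $W(x,y)=1/(e^{x+y}+1)\le \widehat{W}(x,y)$ for $x+y\ge 0$ and $W\le 1$ everywhere, so all the inequalities above hold with $W$ in place of $\widehat{W}$, giving the identical order. Thus both statements follow, with the only delicate point being the region-II estimate described above; regions I and the symmetry reduction are routine.
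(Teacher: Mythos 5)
Your overall strategy coincides with the paper's: reduce by symmetry to the set where at least one coordinate is negative, kill the doubly-negative region I with $\widehat{W}\le 1$ and $\mu_n((-\infty,0])=e^{-\gamma R_n}=O(n^{-\gamma/2})$, and get the rate $n^{-(\gamma+1)/2}$ on region II by keeping the exponential decay in the positive coordinate. (The paper executes the last step by splitting region II at $y=-x$ and evaluating both pieces exactly; you propose a single product bound.) However, your central displayed inequality is false --- in fact it is reversed. For $x>0$ and $y\le 0$ you have $e^{-y}\ge 1$, so $e^{-(x+y)}=e^{-x}e^{-y}\ge e^{-x}$, and on the part where $x+y<0$ you have $\widehat{W}(x,y)=1>e^{-x}$; hence $\widehat{W}(x,y)\ge e^{-x}$ throughout region II, not $\le e^{-x}$. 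Consequently $\int_0^{R_n}\int_{-\infty}^0\widehat{W}(x,y)\,d\mu_n(y)\,d\mu_n(x)\ \ge\ \mu_n((-\infty,0])\int_0^{R_n}e^{-x}\,d\mu_n(x)$: what you assert as an upper bound is a lower bound. Quantitatively, the region-II integral is asymptotic to $\beta^{-2}e^{-(\gamma+1)R_n}$, while your right-hand side is asymptotic to $\beta^{-1}e^{-(\gamma+1)R_n}$, which is strictly smaller since $\beta<1$. The conclusion $O(n^{-(\gamma+1)/2})$ happens to be right, but the step that is supposed to deliver it does not hold as written.

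The repair is immediate and preserves your plan: use the pointwise bound $\widehat{W}(x,y)\le e^{-(x+y)}$, valid everywhere since $\widehat{W}=\min\{e^{-(x+y)},1\}$. The region-II contribution then factorizes as $\bigl(\int_0^{R_n}e^{-x}\,d\mu_n(x)\bigr)\bigl(\int_{-\infty}^0 e^{-y}\,d\mu_n(y)\bigr)$. The first factor is $\omega_n=O(n^{-1/2})$ as you say; the second equals $\gamma e^{-\gamma R_n}\int_{-\infty}^0 e^{(\gamma-1)y}\,dy=e^{-\gamma R_n}/\beta=O(n^{-\gamma/2})$, where the convergence of the $y$-integral is exactly where $\gamma>1$ enters. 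The product is $O(n^{-(\gamma+1)/2})$, as required, and this version even avoids the paper's separate treatment of the tail $(-\infty,-R_n]$. Your transfer to $W$ is fine: $W\le\widehat{W}$ holds on all of $\R^2$ (not only for $x+y\ge0$), which is precisely the paper's one-line argument. So the idea is sound and, once the inequality is stated correctly, arguably cleaner than the paper's exact evaluation; but as submitted the key estimate is wrong in sign.
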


\begin{proof}
First note that
\begin{equation}\label{eq:integration_hat_W_negative}
	\int_{-\infty}^{-R_n} \int_{-\infty}^{R_n} \widehat{W}(x,y) \, d\mu_n(y) \, d\mu_n(x)
	\le \int_{-\infty}^{-R_n} d\mu_n(y) = O\left(n^{-\gamma}\right).
\end{equation}
We show that
\begin{equation}\label{eq:integration_hat_W_mixed}
	\int_{-R_n}^0 \int_{0}^{R_n} \widehat{W}(x,y) \, d\mu_n(y) \, d\mu_n(x) = O\left(n^{-\frac{\gamma + 1}{2}}\right),
\end{equation}
which together with \eqref{eq:mu_n_R_n_0} and \eqref{eq:integration_hat_W_negative} implies the first result. The result for $W$ follows by noting that $W \le \widehat{W}$.

We split the integral \eqref{eq:integration_hat_W_mixed} as follows
\begin{align*}
	&\hspace{-30pt}\int_{-R_n}^0 \int_{0}^{R_n} \widehat{W}(x,y) \, d\mu_n(y) \, d\mu_n(x) \\
	&= \int_{-R_n}^0 \int_{0}^{-x} \, d\mu_n(y) \, d\mu_n(x)
		+ \int_{-R_n}^0 \int_{-x}^{R_n} e^{-(x + y)} \, d\mu_n(y) \, d\mu_n(x).
\end{align*}
For the first integral we compute
\begin{align*}
	\int_{-R_n}^0 \int_{0}^{-x} \, d\mu_n(y) \, d\mu_n(x) &= \gamma e^{-2\gamma R_n} 
		\int_{-R_n}^0 \left(e^{-\gamma x} - 1\right)e^{\gamma x} \, dx \\
	&= e^{-2\gamma R_n} \left(\gamma R_n + e^{-\gamma R_n} - 1\right) = O\left(\log(n) n^{-\gamma}\right)
\end{align*}
Finally, the second integral evaluates to
\begin{align*}
	&\hspace{-60pt}\frac{\gamma}{\beta}e^{-2\gamma R_n}\int_{-R_n}^0 \left(e^{(\gamma - 1)R_n} - e^{-(\gamma - 1)x}\right)e^{(\gamma - 1)x} \, dx \\
	&=\frac{\gamma}{\beta}e^{-2\gamma R_n}\left(\frac{e^{(\gamma - 1)R_n} - 1}{\gamma - 1} - R_n\right)
	= O\left(n^{-\frac{\gamma + 1}{2}}\right),
\end{align*}
from which the result follows since $\gamma > (\gamma + 1)/2$.
\end{proof}

We can show a similar result for $H(W)$ and $H(\widehat{W})$.

\begin{lemma}\label{lem:integration_H_W_W_hat}
\begin{align*}
	\iint_{-\infty}^{R_n} H\left(\widehat{W}(x,y)\right) \, d\mu_n(y) \, d\mu_n(x)
	- \iint_{0}^{R_n} H\left(\widehat{W}(x,y)\right) \, d\mu_n(y) \, d\mu_n(x) = \epsilon_n,
\end{align*}
where
\begin{align*}
	\epsilon_n = \begin{cases}
			O\left(\log(n) n^{-\gamma}\right) &\mbox{if } 1 < \gamma < 2,\\
			O\left(\log(n)^2 n^{-2}\right) &\mbox{if } \gamma = 2,\\
			O\left(n^{-\frac{2 + \gamma}{2}}\right) &\mbox{if } \gamma > 2,
		\end{cases}
\end{align*}
Moreover, the same result holds if we replace $\widehat{W}$ with $W$.
\end{lemma}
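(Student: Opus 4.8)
The plan is to follow the template of Lemma~\ref{lem:integration_W_W_hat} almost verbatim, only with $H(\widehat{W})$ in place of $\widehat{W}$. The quantity to estimate is the integral of $H(\widehat{W})$ over the L-shaped region $(-\infty,R_n]^2\setminus(0,R_n]^2$, i.e.\ over $\{x\le 0\}\cup\{y\le 0\}$. By the symmetry of $H(\widehat{W})(x,y)$ and inclusion-exclusion this equals $2\iint_{x\le 0,\,0<y\le R_n}H(\widehat{W})\,d\mu_n\,d\mu_n+\iint_{x,y\le 0}H(\widehat{W})\,d\mu_n\,d\mu_n$, which is exactly the case~II plus case~I decomposition indicated in \eqref{eq:def_spliting_A_n}.

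First I would treat $\widehat{W}$. The key simplification, absent from Lemma~\ref{lem:integration_W_W_hat}, is that $H$ annihilates the flat part of $\widehat{W}$: whenever $x+y\le 0$ we have $\widehat{W}=\min\{e^{-(x+y)},1\}=1$, hence $H(\widehat{W})=H(1)=0$. This kills case~I entirely, and within case~II it removes the sub-triangle $0<y\le -x$, leaving only $\{x\le 0<y\le R_n,\ x+y>0\}$, where $\widehat{W}=e^{-(x+y)}$ and $H(\widehat{W})=(x+y)e^{-(x+y)}-(1-e^{-(x+y)})\log(1-e^{-(x+y)})$. The substitution $u=x+y$ collapses each term to a one-dimensional integral; the first, dominant term reduces to $\gamma^2 e^{-2\gamma R_n}\int_0^{R_n}\!\int_0^y u\,e^{(\gamma-1)u}\,du\,dy$, while the second I would bound using the elementary inequality $-(1-p)\log(1-p)\le p$ together with Lemma~\ref{lem:integration_W_W_hat} applied to $e^{-(x+y)}$. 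Evaluating these elementary exponential integrals and inserting $e^{2R_n}=n/\nu\beta^2$ then yields $\epsilon_n$.

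For the Fermi-Dirac $W$, case~I no longer vanishes, but there $H(W)\le\log 2$ is bounded and $\iint_{x,y\le 0}d\mu_n\,d\mu_n=\mu_n((-\infty,0])^2=O(n^{-\gamma})$ by \eqref{eq:mu_n_R_n_0}, so case~I is harmless. On the mixed region I would write $H(W)=H(\widehat{W})+[H(W)-H(\widehat{W})]$ and control the correction through $|W-\widehat{W}|\le e^{-2(x+y)}$ (valid for $x+y>0$) and the local Lipschitz behaviour of $H$, so that the $W$-estimate matches the $\widehat{W}$-estimate to the required order.

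The main obstacle is the bookkeeping of the $\gamma$-dependent exponential integrals, uniformly in $\gamma>1$. Two features make this more delicate than Lemma~\ref{lem:integration_W_W_hat}: the Bernoulli entropy carries the extra energy factor $(x+y)$, which is precisely what produces the additional $\log n$ relative to the bare graphon integral, and its $-(1-p)\log(1-p)$ piece develops a logarithmic singularity as $p\to 1$, i.e.\ as $x+y\to 0^+$, which must be shown to integrate harmlessly against $\mu_n$. The split into the three regimes, with its transition at $\gamma=2$, ultimately comes from an integral of the type $\int_0^{R_n}e^{(\gamma-2)u}\,du$ contributed by the rate-$2$ correction $W-\widehat{W}$, which is $O(1)$, $O(\log n)$, or $O(e^{(\gamma-2)R_n})$ according as $\gamma<2$, $\gamma=2$, or $\gamma>2$; getting the balance of these competing exponentials exactly right is the crux of the argument.
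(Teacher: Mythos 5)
Your qualitative skeleton is fine and in places cleaner than the paper's: the region decomposition matches \eqref{eq:def_spliting_A_n}, the observation that $H(\widehat{W})\equiv 0$ on $\{x+y\le 0\}$ legitimately kills case~I and the sub-triangle of case~II (the paper instead bounds $H\le\log 2$ there), and the treatment of $W$ via $|W-\widehat{W}|\le e^{-2(x+y)}$ is the same as the paper's. The problem is quantitative: your bounds do not reach the stated $\epsilon_n$. On the surviving wedge $\{x\le 0<y\le R_n,\ x+y>0\}$ your dominant term $\iint (x+y)e^{-(x+y)}\,d\mu_n\,d\mu_n$ evaluates to $\Theta\left(R_n e^{-(\gamma+1)R_n}\right)=\Theta\left(\log(n)\,n^{-(\gamma+1)/2}\right)$, and your bound $-(1-p)\log(1-p)\le p$ reduces the second term to Lemma~\ref{lem:integration_W_W_hat}, which gives $O\left(n^{-(\gamma+1)/2}\right)$. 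Since $(\gamma+1)/2<\gamma<\tfrac{\gamma+2}{2}$ for all $\gamma>1$, both quantities are strictly larger than the claimed $\epsilon_n$ in every one of the three regimes, so your argument proves the identity only with the weaker error $O\left(\log(n)\,n^{-(\gamma+1)/2}\right)$. Your closing attribution of the $\gamma=2$ trichotomy to the correction $W-\widehat{W}$ also does not match the paper, which derives the three regimes already for $\widehat{W}$ itself from the bound \eqref{eq:upper_bound_H_W_hat} applied on $\{x+y>\delta\}$.

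You should know, however, that the gap is not one you can close: the paper's inequality \eqref{eq:upper_bound_H_W_hat} is false for large $z$ (the left-hand side is asymptotic to $e^{-z}$, not $e^{-2z}$; it already fails at $z=1$), and the paper additionally discards the $(x+y)e^{-(x+y)}$ part of $H(\widehat{W})$ when it replaces the whole integrand by $e^{-2(x+y)}$ on that region. A direct lower bound (integrate $(x+y)e^{-(x+y)}$ over $x\in[-1,0]$, $y\in[R_n-1,R_n]$) shows the boundary contribution is genuinely of order $\log(n)\,n^{-(\gamma+1)/2}$, so your rate is essentially sharp and the stated $\epsilon_n$ is too optimistic. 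Fortunately $\log(n)\,n^{-(\gamma+1)/2}=o\left(\log(n)/n\right)$ for $\gamma>1$, which is all that Proposition~\ref{prop:concentration_Gibbs_W_W_hat} and Theorem~\ref{thm:convergence_graphon_entropy} actually use downstream; so the right fix is to state and prove the lemma with your weaker error term, carrying out the elementary exponential integrals you sketch rather than aiming for the three-case $\epsilon_n$.
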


\begin{proof}
We will first prove the result for $\widehat{W}$. For this we split the interval $A_n$ into three parts $(-\infty, -R_n]$, $[-R_n, 0]$ and $(0, R_n]$ 
and show that the integrals on all ranges other than $[0, R_n]^2$ are bounded by a term that scales as $\epsilon_n$. 

Since $H(p) \le \log(2)$ for all $0 \le p \le 1$ it follows from \eqref{eq:mu_n_R_n_0} and \eqref{eq:integration_hat_W_negative} that, for all $\gamma > 1$,
\begin{align*}
	&\iint_{-\infty}^0 H\left(\widehat{W}(x,y)\right) \, d\mu_n(y) \, d\mu_n(x) = O\left(n^{-\gamma}\right)
	= o(\epsilon_n), \\
	&\int_{-\infty}^{R_n} \int_{-\infty}^{-R_n} H\left(\widehat{W}(x,y)\right) \, d\mu_n(y) \, d\mu_n(x) = O\left(n^{-\gamma}\right)
		= o(\epsilon_n).
\end{align*}
Hence, using the symmetry of $\widehat{W}$, we only need to consider the integration over $(0, R_n] \times (-R_n, 0]$.

First we compute
\begin{align*}
	H\left(\widehat{W}(x,y)\right) 
	= e^{-(x+y)}(x + y) - (1 - e^{-(x + y)})\log\left(1 - e^{-(x + y)}\right)
\end{align*}
and observe that
\begin{equation}\label{eq:upper_bound_H_W_hat}
	- (1 - e^{-z})\log\left(1 - e^{-z}\right) \le e^{-2z} \quad \text{for all large enough $z$.}
\end{equation}
Now let $\delta > 0$ be such that \eqref{eq:upper_bound_H_W_hat} holds for all $z \ge \delta$. Then $\delta < R_n$ for sufficiently 
large $n$ and we split the integration as follows
\begin{align*}
	&\hspace{-30pt}\int_0^{R_n} \int_{-R_n}^0 H\left(\widehat{W}(x,y)\right) \, d\mu_n(y) \, d\mu_n(x)\\
	&= \int_0^{\delta} \int_{-R_n}^0 H\left(\widehat{W}(x,y)\right) \, d\mu_n(y) \, d\mu_n(x)
		+ \int_\delta^{R_n} \int_{-R_n}^0 H\left(\widehat{W}(x,y)\right) \, d\mu_n(y) \, d\mu_n(x). 
\end{align*}
The first integral is $O(n^{-\gamma})$. For the second we note that $x + y > \delta$ for all $y > \delta - x$ and hence
\begin{align*}
	&\hspace{-30pt}\int_\delta^{R_n} \int_{-R_n}^0 H\left(\widehat{W}(x,y)\right) \, d\mu_n(y) \, d\mu_n(x)\\
	&\le \int_\delta^{R_n} \int_{-R_n}^{\delta - x} \log(2) \, d\mu_n(y) \, d\mu_n(x)
		+ \int_\delta^{R_n} \int_{\delta - x}^0 e^{-2(x + y)} \, d\mu_n(y) \, d\mu_n(x). 
\end{align*}
For the second integral we obtain, 
\begin{align*}
	\int_{\delta}^{R_n} \int_{\delta - x}^0 e^{-2(x + y)} \, d\mu_n(y) \, d\mu_n(x) = \begin{cases}
		O\left(\log(n) n^{-\gamma}\right) &\mbox{if } 1 < \gamma < 2,\\
		O\left(\log(n)^2 n^{-2}\right) &\mbox{if } \gamma = 2,\\
		O\left(n^{-\frac{2 + \gamma}{2}}\right) &\mbox{if } \gamma > 2,
	\end{cases}
\end{align*}
while for the first integral we have
\begin{align*}
	\int_\delta^{R_n} \int_{-R_n}^{\delta - x} \log(2) \, d\mu_n(y) d\mu_n(x) 
	&\le \gamma e^{-2\gamma R_n}
		\int_{\delta}^{R_n} e^{\gamma \delta} \, dx = O\left(\log(n) n^{-\gamma}\right).
\end{align*}
Therefore we conclude that
\begin{align*}
	\int_0^{R_n} \int_{-\infty}^0 H\left(\widehat{W}(x,y)\right) \, d\mu_n(y) d\mu_n(x)
	= \begin{cases}
			O\left(\log(n) n^{-\gamma}\right) &\mbox{if } 1 < \gamma < 2,\\
			O\left(\log(n) n^{-2}\right) &\mbox{if } \gamma = 2,\\
			O\left(n^{-\frac{4 + \gamma}{2}}\right) &\mbox{if } \gamma > 2,
	\end{cases}
\end{align*}
which yields the result for $\widehat{W}$.

For $W$ we first compute that 
\begin{align*}
	H(W(x,y)) &= \log\left(1 + e^{x + y}\right) - \frac{(x + y) e^{x + y}}{1 + e^{x + y}}\\
		&= \log\left(1 + e^{x + y}\right) - (x + y) + (x + y)\left(1 - \frac{e^{x + y}}{1 + e^{x + y}}\right)\\
		&= \log\left(1 + e^{-(x + y)}\right) + \frac{(x + y)}{1 + e^{x + y}}
		\le \log\left(1 + e^{-(x + y)}\right) + (x + y)e^{-(x + y)}.
\end{align*}
Comparing this upper bound to $H(\widehat{W}(x,y))$ and noting that $\log\left(1 + e^{-z}\right)
\le e^{-2z}$ for large enough $z$, the result follows from the computation done for $\widehat{W}$.
\end{proof}

With these two lemmas we now establish two important results on the approximations of $W$. The first shows 
that if $X$ and $Y$ are independent with distribution $\mu_n$, then $\widehat{W}(X,Y)$ converges in expectation to $W(X,Y)$,
faster than $n^{-1}$.

\begin{proposition}\label{prop:concentration_W_W_hat}
Let $X, Y$ be independent with density $\mu_n$ and $\gamma > 1$. Then, as $n \to \infty$,
\begin{align*}
	\Exp{\left|W(X, Y) - \widehat{W}(X, Y)\right|} = O\left(n^{-\frac{\gamma + 1}{2}}\right)
\end{align*}
\end{proposition}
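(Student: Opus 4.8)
The plan is to bound $\Exp{|W(X,Y) - \widehat{W}(X,Y)|}$ by splitting the domain of integration according to the three cases in~\eqref{eq:def_spliting_A_n} and showing that the dominant contribution, on $(0,R_n]^2$, already decays like $n^{-(\gamma+1)/2}$, while the remaining pieces are negligible by the lemmas just proved. The first reduction is to invoke Lemma~\ref{lem:integration_W_W_hat}: since both $W$ and $\widehat{W}$ integrate over $A_n^2 \setminus (0,R_n]^2$ to within $O(n^{-(\gamma+1)/2})$ of their integrals over $(0,R_n]^2$, the pointwise difference $|W - \widehat{W}|$ over the complement is controlled by the triangle inequality and contributes at most $O(n^{-(\gamma+1)/2})$ to the expectation. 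Hence it suffices to bound
\begin{align*}
\iint_0^{R_n} \left|W(x,y) - \widehat{W}(x,y)\right| \, d\mu_n(y)\,d\mu_n(x).
\end{align*}

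On $(0,R_n]^2$ we have $x+y > 0$, so $\widehat{W}(x,y) = e^{-(x+y)}$, and the integrand is the genuine approximation error of the Fermi--Dirac graphon by its classical limit. The second step is the pointwise estimate
\begin{align*}
0 \le \widehat{W}(x,y) - W(x,y) = e^{-(x+y)} - \frac{1}{e^{x+y}+1} = \frac{e^{-(x+y)}}{e^{x+y}+1} \le e^{-2(x+y)},
\end{align*}
where positivity follows because $e^{-z} \ge 1/(e^z+1)$ for all $z$, so the absolute value can be dropped. The third step is then to integrate this clean exponential bound against $\mu_n \otimes \mu_n$ over $(0,R_n]^2$, which factorizes:
\begin{align*}
\iint_0^{R_n} e^{-2(x+y)} \, d\mu_n(y)\,d\mu_n(x) = \left(\int_0^{R_n} e^{-2x} \, d\mu_n(x)\right)^2.
\end{align*}
Using $d\mu_n(x) = \gamma e^{\gamma(x-R_n)}\,dx$ from~\eqref{eq:def_hscm_mu_n}, the inner integral is $\gamma e^{-\gamma R_n}\int_0^{R_n} e^{(\gamma-2)x}\,dx$, which for all $\gamma > 1$ is $O(e^{-\gamma R_n} \cdot \max\{1, e^{(\gamma-2)R_n}, R_n\})$; since $e^{-\gamma R_n} \sim (\nu\beta^2/n)^{\gamma/2}$, a short case analysis in $\gamma$ confirms this single integral is $O(n^{-1/4})$ in the worst regime, so its square is comfortably $O(n^{-(\gamma+1)/2})$ once combined with the decay of $e^{-2\gamma R_n}$.

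The only genuine subtlety, and the step I would watch most carefully, is getting the exponents in the final integration to line up so that the square of the inner integral really is $O(n^{-(\gamma+1)/2})$ uniformly across the three regimes $1 < \gamma < 2$, $\gamma = 2$, and $\gamma > 2$; the integral $\int_0^{R_n} e^{(\gamma-2)x}\,dx$ behaves differently (bounded, linear in $R_n$, or growing exponentially) in each case, and one must verify that after multiplying by the prefactors $e^{-2\gamma R_n}$ the resulting rate is dominated by $n^{-(\gamma+1)/2}$ in every case, exactly as the comparable bookkeeping was carried out in the proofs of Lemmas~\ref{lem:integration_W_W_hat} and~\ref{lem:integration_H_W_W_hat}. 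I expect this to go through because the $e^{-2(x+y)}$ envelope decays twice as fast as the $e^{-(x+y)}$ envelope governing Lemma~\ref{lem:integration_W_W_hat}, giving room to spare, so the main obstacle is purely a matter of careful exponent tracking rather than any conceptual difficulty.
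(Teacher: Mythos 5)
Your argument is essentially the paper's own proof: the same reduction to $(0,R_n]^2$ via Lemma~\ref{lem:integration_W_W_hat}, the same pointwise bound $0\le \widehat{W}-W=e^{-(x+y)}/(e^{x+y}+1)\le e^{-2(x+y)}$, and the same factorization into $\bigl(\int_0^{R_n}e^{-2x}\,d\mu_n(x)\bigr)^2$. The one slip is in your closing numerology: the single integral $\int_0^{R_n}e^{-2x}\,d\mu_n(x)=\gamma e^{-\gamma R_n}\int_0^{R_n}e^{(\gamma-2)x}\,dx$ is $O(n^{-\gamma/2})$ for $1<\gamma<2$, $O(\log(n)\,n^{-1})$ for $\gamma=2$, and $O(n^{-1})$ for $\gamma>2$; it is not merely ``$O(n^{-1/4})$ in the worst regime,'' and if it were, its square would only be $O(n^{-1/2})$, which never beats $n^{-(\gamma+1)/2}$ for $\gamma>1$. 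With the correct rates the square is $O(n^{-\gamma})$, $O(\log(n)^2 n^{-2})$, and $O(n^{-2})$ in the three regimes---exactly the display in the paper---and these are $O(n^{-(\gamma+1)/2})$ because $\gamma\ge(\gamma+1)/2$ for $\gamma\ge1$ and $2\ge(\gamma+1)/2$ for $\gamma\le3$. (For $\gamma>3$ the $n^{-2}$ term actually exceeds $n^{-(\gamma+1)/2}$; the paper's proof asserts ``all these terms are $o(n^{-(\gamma+1)/2})$'' and glosses over this, so what is really established is $O(n^{-\min\{(\gamma+1)/2,\,2\}})$, which is still $o(n^{-1})$ and suffices for every downstream use. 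Your write-up inherits exactly the same feature, so this is not a defect relative to the paper.)
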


\begin{proof}
Since
\begin{align*}
	\Exp{\left|W(X, Y) - \widehat{W}(X, Y)\right|}
	= \iint_{-\infty}^{R_n} \left|W(x, y) - \widehat{W}(x, y)\right| \, d\mu_n(y) \, d\mu_n(x),
\end{align*}
and $\left|W(x, y) - \widehat{W}(x, y)\right| \le 1$ it follows from Lemma \ref{lem:integration_W_W_hat} that it is 
enough to consider the integral
\begin{align*}
	\iint_0^{R_n} \left|W(x, y) - \widehat{W}(x, y)\right| \, d\mu_n(y) \, d\mu_n(x).
\end{align*}
For this we note that
\begin{align*}
	\left|W(x,y) - \widehat{W}(x,y)\right| \le e^{-2(x + y)}.
\end{align*}
Hence we obtain
\begin{align*}
	&\hspace{-30pt}\iint_0^{R_n} \left|W(x, y) - \widehat{W}(x, y)\right| \, d\mu_n(y) \, d\mu_n(x)\\
	&\le \left(\int_0^{R_n} e^{-2x} \, d\mu_n(x)\right)^2
		= \begin{cases}
			O\left(n^{-\gamma}\right) &\mbox{if } 1 < \gamma < 2\\
			O\left(\log(n)^2 n^{-2}\right) &\mbox{if } \gamma = 2\\
			O\left(n^{-2}\right) &\mbox{if } \gamma > 2.
		\end{cases} 
\end{align*}
Since all these terms are $o(n^{-(\gamma + 1)/2})$, the result follows.
\end{proof}

Next we show that also $H(\widehat{W}(X, Y))$ converges in expectation to $H(W(X,Y))$, faster than $n^{-1}$.

\begin{proposition}\label{prop:concentration_Gibbs_W_W_hat}
Let $X, Y$ be independent with density $\mu_n$ and $\gamma > 1$. Then, as $n \to \infty$,
\begin{align*}
	\Exp{\left|H\left(W(X, Y)\right) - H\left(\widehat{W}(X, Y)\right)\right|} 
	= \begin{cases}
			O\left(\log(n) n^{-\gamma}\right) &\mbox{if } 1 < \gamma < 2\\
			O\left(\log(n)^3 n^{-2}\right) &\mbox{if } \gamma = 2\\
			O\left(\log(n) n^{-2}\right) &\mbox{if } \gamma > 2.
		\end{cases}
\end{align*}
\end{proposition}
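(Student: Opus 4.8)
The plan is to mirror the proof of Proposition~\ref{prop:concentration_W_W_hat}, replacing the elementary estimate on $|W-\widehat{W}|$ by one on $|H(W)-H(\widehat{W})|$ that is only slightly lossier. First I would write
\[
	\Exp{\left|H(W(X,Y))-H(\widehat{W}(X,Y))\right|}=\iint_{-\infty}^{R_n}\left|H(W(x,y))-H(\widehat{W}(x,y))\right|\,d\mu_n(y)\,d\mu_n(x),
\]
and split the domain $(-\infty,R_n]^2$ into case~III, $(0,R_n]^2$, and its complement. On the complement the triangle inequality gives $|H(W)-H(\widehat{W})|\le H(W)+H(\widehat{W})$, and Lemma~\ref{lem:integration_H_W_W_hat}, applied to both $\widehat{W}$ and $W$, shows that the integrals of $H(\widehat{W})$ and of $H(W)$ over this complement are each $O(\epsilon_n)$. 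Hence the complement contributes $O(\epsilon_n)$, which already supplies the $O(\log(n)n^{-\gamma})$ term when $1<\gamma<2$ and is negligible when $\gamma\ge2$.

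It then remains to bound the integral over case~III, where $z:=x+y\ge0$ and $\widehat{W}(x,y)=e^{-z}$. Here I would avoid expanding the two entropies and instead use the mean value theorem: $H(W)-H(\widehat{W})=H'(\xi)\,(W-\widehat{W})$ for some $\xi$ between $W$ and $\widehat{W}$, where $H'(p)=\log((1-p)/p)$. Fix $\delta$ large enough that $e^{-\delta}$ is small. For $z>\delta$ both $W=1/(e^z+1)$ and $\widehat{W}=e^{-z}$ lie in $(0,e^{-\delta}]$, so by monotonicity of $H'$ one has $0<H'(\xi)\le H'(W)=z$, while from the proof of Proposition~\ref{prop:concentration_W_W_hat} one has $|W-\widehat{W}|\le e^{-2z}$. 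This gives the pointwise estimate $|H(W)-H(\widehat{W})|\le z\,e^{-2z}=(x+y)\,e^{-2(x+y)}$. On the remaining strip $0\le z\le\delta$ I would bound the integrand crudely by $\log 2$; since this region is contained in $[0,\delta]^2$, whose $\mu_n\times\mu_n$-measure is $O(n^{-\gamma})$ by a computation like the one behind \eqref{eq:mu_n_R_n_0}, it contributes only $O(n^{-\gamma})$.

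The final step is to integrate $(x+y)e^{-2(x+y)}$ against $\mu_n\times\mu_n$ over $(0,R_n]^2$. Writing $d\mu_n(x)=\gamma e^{-\gamma R_n}e^{\gamma x}\,dx$ turns this into $\gamma^2 e^{-2\gamma R_n}\iint_0^{R_n}(x+y)e^{(\gamma-2)(x+y)}\,dx\,dy$, and the behaviour of the inner integral is governed by the sign of $\gamma-2$: for $1<\gamma<2$ it converges to a constant and the prefactor yields $O(n^{-\gamma})$; for $\gamma=2$ the integrand reduces to $(x+y)$, producing a factor $R_n^3$ and hence $O(\log(n)^3 n^{-2})$; for $\gamma>2$ the mass concentrates near $(R_n,R_n)$, giving a factor of order $R_n\,e^{2(\gamma-2)R_n}$ and hence $O(\log(n)n^{-2})$. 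Comparing each of these with $\epsilon_n$ in the corresponding regime reproduces the three stated rates. I expect the only delicate point to be the mean value step near the corner $z=0$, where $H'$ is unbounded and $\widehat{W}$ touches $1$; isolating the small-$z$ strip and discarding it through its exponentially small measure is what keeps the argument clean, and verifying that the surviving bound $(x+y)e^{-2(x+y)}$ is sharp enough to beat $\log(n)/n$ in all three regimes is the crux.
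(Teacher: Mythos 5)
Your proposal is correct and follows essentially the same route as the paper: reduce to $(0,R_n]^2$ via Lemma~\ref{lem:integration_H_W_W_hat}, apply the mean value theorem to $H$ together with the bounds $|W-\widehat{W}|\le e^{-2(x+y)}$ and $|H'|\lesssim x+y$, dispose of the small-$(x+y)$ corner by its $O(n^{-\gamma})$ measure, and integrate $(x+y)e^{-2(x+y)}$ in the three regimes. The only (harmless) cosmetic differences are that you control $H'(\xi)$ by monotonicity of $H'$ on $(0,1/2)$ rather than by the paper's bound $|H'(c_W)|\le|H'(W)|+|H'(\widehat{W})|$, and you cut at $x+y\le\delta$ instead of $x+y\le 1$.
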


\begin{proof}
Similar to the previous proof we now use Lemma \ref{lem:integration_H_W_W_hat} to show that it is enough
to consider the integral
\begin{align*}
	\iint_0^{R_n} \left|H\left(W(x,y)\right) - H\left(\widehat{W}(x,y)\right)\right| \, d\mu_n(y) d\mu_n(x).
\end{align*}

Define 
\begin{align*}
	\delta_W(x,y) = \frac{1}{e^{2(x+y)} + e^{x + y}},
\end{align*}
and note that $\widehat{W}(x,y) = W(x,y) + \delta_W(x,y)$. Now fix $x, y$. Then, by the Taylor-Lagrange theorem, there
exists a $W(x,y) \le c_W(x,y) \le \widehat{W}(x,y)$ such that
\begin{align*}
	\left|H\left(W(x,y)\right) - H\left(\widehat{W}(x,y)\right)\right| &= \left|H^\prime\left(c_W(x,y)\right)\right| \delta_W(x,y)\\
	&\le \left(\left|H^\prime\left(W(x,y)\right)\right| + \left|H^\prime\left(\widehat{W}(x,y)\right)\right|\right)\delta_W(x,y).
\end{align*}
Next we compute that
\begin{align*}
	\left|H^\prime\left(W(x,y)\right)\right| = (x + y), \quad \text{and} \quad
	\left|H^\prime\left(\widehat{W}(x,y)\right)\right| = \log\left(e^{x + y} - 1\right),
\end{align*}
where $\log(e^{x + y} - 1) \le (x + y)$ for all $x + y \ge 1$. We now split the integral and bound it as follows
\begin{align*}
	&\hspace{-30pt}\iint_0^{R_n} \left|H\left(W(x,y)\right) - H\left(\widehat{W}(x,y)\right)\right| \, d\mu_n(y) \, d\mu_n(x)\\
	&= \int_0^1 \int_0^{1-x} \left|H\left(W(x,y)\right) - H\left(\widehat{W}(x,y)\right)\right| \, d\mu_n(y) \, d\mu_n(x) \\
	&\hspace{10pt}+ \int_0^1 \int_{1-x}^{R_n} \left|H\left(W(x,y)\right) - H\left(\widehat{W}(x,y)\right)\right| \delta_W(x,y) \, d\mu_n(y) \, d\mu_n(x)\\
	&\hspace{10pt}+ \int_1^{R_n} \int_0^{R_n} \left|H\left(W(x,y)\right) - H\left(\widehat{W}(x,y)\right)\right| \delta_W(x,y) \, d\mu_n(y) \, d\mu_n(x)\\
	&\le \log(2) \int_0^1 \int_0^{1-x} \, d\mu_n(y) \, d\mu_n(x)\\
	&\hspace{10pt}+ 2\int_0^1 \int_{1-x}^{R_n} (x + y) \delta_W(x,y) \, d\mu_n(y) \, d\mu_n(x) \\
	&\hspace{10pt}+ 2\int_1^{R_n} \int_0^{R_n} (x + y) \delta_W(x,y) \, d\mu_n(y) \, d\mu_n(x) \\
	&\le \log(2) \int_0^1 \int_0^{1-x} \, d\mu_n(y) \, d\mu_n(x) + 4 \iint_0^{R_n} (x + y) \delta_W(x,y) \, d\mu_n(y) \, d\mu_n(x).
\end{align*}
The first integral is $O(\log(n) n^{-\gamma})$, while for the second we have
\begin{align*}
	&\hspace{-30pt}4 \iint_0^{R_n} (x + y) \delta_W(x,y) \, d\mu_n(y) \, d\mu_n(x)\\
	&\le 8 R_n \left(\int_0^{R_n} e^{-2x} \, d\mu_n(x)\right)^2
			= \begin{cases}
				O\left(\log(n) n^{-\gamma}\right) &\mbox{if } 1 < \gamma < 2\\
				O\left(\log(n)^3 n^{-2}\right) &\mbox{if } \gamma = 2\\
				O\left(\log(n) n^{-2}\right) &\mbox{if } \gamma > 2.
			\end{cases} 
\end{align*}
Comparing these scaling to the ones from Lemma \ref{lem:integration_H_W_W_hat} we see that the former are dominating, which finishes the proof.
\end{proof}

\subsection{Proofs for node degrees in the \texttt{HSCM}}\label{ssec:proofs_degrees_hscm}

In this section we give the proofs of Theorem \ref{thm:mixed_poisson_degrees_hscm} and Theorem \ref{thm:average_degree_hscm}. 
Denote by $D_i$ the degree of node $i$ and recall that $\D_n$ is the degree of a node, sampled uniformly at random.
Since the node labels are interchangeable we can, without loss of generality, consider $D_1$ for $\D_n$. 

For Theorem \ref{thm:average_degree_hscm} we use \eqref{eq:average_degree_hscm}. We show that if $X$ and $Y$ are independent 
with distribution $\mu_n$, then $\Exp{\widehat{W}(X,Y)} \to \nu$. The final result will then follow from Proposition 
\ref{prop:concentration_W_W_hat}.

The proof of Theorem \ref{thm:mixed_poisson_degrees_hscm} is more involved. Given the coordinates $X_1, \dots X_n$ we have 
$\mathscr{D}_n = \sum_{j = 2}^n W(X_1, X_j)$. We follow the strategy from \cite[Theorem 6.7]{hofstad2016random} and construct 
a coupling between $\mathscr{D}_n$ and a mixed Poisson random variable $P_n$, with mixing parameter $n \widehat{w}_n(X)$, where 
$\widehat{w}_n(x)$ is given by \eqref{eq:def_expected_degree_function_approximation_hscm} and $X$ has distribution $\mu_n$. 

In general, a coupling between two random variables $X$ and $Y$ consists of a pair of new random variables $(\widehat{X}, \widehat{Y})$,
with some joint probability distribution, such that $\widehat{X}$ and $\widehat{Y}$ have the same marginal probabilities as, respectively 
$X$ and $Y$. The advantage of a coupling is that we can tune the joint distribution to fit our needs. For our proof we construct
a coupling $(\widehat{\D}_n, \widehat{P}_n)$, such that
\begin{align*}
	\lim_{n \to \infty} \Prob{\widehat{\mathscr{D}}_n \ne \widehat{P}_n} = 0.
\end{align*}
Hence, since $\widehat{\D}_n$ and $\widehat{P}_n$ have the same distribution as, respectively $\mathscr{D}_n$ and $P_n$ we have
\begin{align*}
	\Prob{\mathscr{D}_n = k} 
	= \Prob{P_n = k} + \Prob{\widehat{\mathscr{D}}_n = k, \widehat{\mathscr{D}}_n \ne \widehat{P}_n},
\end{align*}
from which it follows that $|\Prob{\mathscr{D}_n = k} - \Prob{P_n = k}| \to 0$. Finally, we show that 
\begin{align*}
	\lim_{n \to \infty} \Prob{n \widehat{w}_n(X) > k} = \Prob{Y > k},
\end{align*}
where $Y$ is a Pareto random variable with shape $\nu \beta$ and scale $\gamma$, i.e. it has probability density $P_Y(y)$ given by
\eqref{eq:pareto-pdf}. This implies that the mixed Poisson
random variable $P_n$ with mixing parameter $n \widehat{w}_n(X)$ converges to a mixed Poisson $\mathscr{D}$ with mixing parameter $Y$,
which proves the result.

Before we give the proofs of the two theorems we first establish some technical results needed to construct the coupling, required
for Theorem \ref{thm:mixed_poisson_degrees_hscm}, the proof of which is given in Section \ref{sssec:proof_degree_distribution_hscm}.

\subsubsection{Technical results on Poisson couplings and concentrations}

We first establish a general result for couplings between mixed Poisson random variables, where the mixing parameters converge in
expectation.

\begin{lemma}\label{lem:coupling_mixed_poisson_converging_rvs}
Let $X_n$ and $Y_n$ be random variables such that,
\begin{align*}
	\lim_{n \to \infty} \Exp{|X_n - Y_n|} = 0.
\end{align*}
Then, if $P_n$ and $Q_n$ are mixed Poisson random variables
with, respectively, parameters $X_n$ and $Y_n$,
\begin{align*}
	\Prob{P_n \ne Q_n} = O\left(\Exp{|X_n - Y_n|}\right),	
\end{align*}
and in particular
\begin{align*}
	\lim_{n \to \infty} \Prob{P_n \ne Q_n} = 0.
\end{align*}
\end{lemma}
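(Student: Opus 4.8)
The plan is to construct the coupling in two stages: keep the joint law of the mixing parameters $(X_n, Y_n)$ exactly as given, and then couple the Poisson counts conditionally on $(X_n, Y_n)$. Since $\Exp{|X_n - Y_n|}$ is assumed to exist, the two variables are defined on a common probability space and are nonnegative (as Poisson parameters), so I would condition on $\{X_n = x, Y_n = y\}$ and build the coupled pair $(\widehat{P}_n, \widehat{Q}_n)$ there, reproducing the mixed-Poisson marginals of $P_n$ and $Q_n$ after averaging.

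The heart of the construction is the superposition property of the Poisson distribution. Conditionally on $\{X_n = x, Y_n = y\}$ with $x \le y$, I would sample $N \sim \mathrm{Poisson}(x)$ and an independent $M \sim \mathrm{Poisson}(y - x)$, and set $\widehat{P}_n = N$ and $\widehat{Q}_n = N + M$. Then conditionally $\widehat{P}_n \sim \mathrm{Poisson}(x)$ and $\widehat{Q}_n \sim \mathrm{Poisson}(y)$, so integrating against the law of $(X_n, Y_n)$ gives exactly the mixed-Poisson marginals of $P_n$ and $Q_n$. The case $x > y$ is treated symmetrically by reversing the roles.

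The key observation is that under this coupling $\widehat{P}_n \neq \widehat{Q}_n$ if and only if $M \ge 1$, so that
\begin{align*}
	\Prob{\widehat{P}_n \ne \widehat{Q}_n \mid X_n, Y_n} = 1 - e^{-|X_n - Y_n|}.
\end{align*}
Taking expectations and applying the elementary inequality $1 - e^{-t} \le t$, valid for all $t \ge 0$, yields
\begin{align*}
	\Prob{P_n \ne Q_n} = \Exp{1 - e^{-|X_n - Y_n|}} \le \Exp{|X_n - Y_n|},
\end{align*}
which is precisely the asserted $O\!\left(\Exp{|X_n - Y_n|}\right)$ bound; the stated limit then follows immediately from the hypothesis $\Exp{|X_n - Y_n|} \to 0$.

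I do not anticipate a genuine obstacle here, since the statement is essentially a conditional application of the standard additive Poisson coupling. The only points requiring a little care are verifying that averaging the conditional coupling against the given joint law of $(X_n, Y_n)$ indeed recovers the correct mixed-Poisson marginals, and that the conditional disagreement probability is bounded pointwise by $|X_n - Y_n|$ before integration, so that no integrability issue arises beyond the assumed finiteness of $\Exp{|X_n - Y_n|}$.
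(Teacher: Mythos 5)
Your proof is correct, and it takes a cleaner and more direct route than the paper's. You construct the coupling explicitly: conditionally on $(X_n,Y_n)=(x,y)$ with $x\le y$, you realize the larger Poisson count as the smaller one plus an independent $\mathrm{Poisson}(|x-y|)$ increment, so the conditional disagreement probability is exactly $1-e^{-|x-y|}\le|x-y|$, and integrating against the joint law of $(X_n,Y_n)$ gives $\Prob{P_n\ne Q_n}\le\Exp{|X_n-Y_n|}$ directly. The paper instead sets $a_n=\Exp{|X_n-Y_n|}$, truncates to the event $A_n=\{|X_n-Y_n|\le\sqrt{a_n}\}$ via Markov's inequality, and compares both $P_n$ and $Q_n$ to an auxiliary mixed Poisson variable with the shifted parameter $X_n+\sqrt{a_n}$; the same superposition idea appears there implicitly, but the truncation and the shift each cost a factor of $\sqrt{a_n}$, so the paper's argument as written only establishes $\Prob{P_n\ne Q_n}=O(\sqrt{a_n})$. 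That weaker rate still yields the limit statement, which is all that is used downstream in the proof of Theorem~\ref{thm:mixed_poisson_degrees_hscm}, but it does not actually deliver the $O\left(\Exp{|X_n-Y_n|}\right)$ rate asserted in the lemma, whereas your direct coupling does. The only points worth making explicit in a final write-up are the ones you already flag: the nonnegativity of the mixing parameters, the measurability of the case split $x\le y$ versus $x>y$, and the verification that averaging the conditional construction recovers the mixed-Poisson marginals.
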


\begin{proof}
Let $a_n = \Exp{|X_n - Y_n|}$ and define the event $A_n = \{|X_n - Y_n| \le \sqrt{a_n}\}$. Then, since
\begin{align*}
	\lim_{n \to \infty} \Prob{|X_n - Y_n| > \sqrt{a_n}} \le \lim_{n \to \infty} \frac{\Exp{|X_n - Y_n|}}{\sqrt{a_n}} = 0,
\end{align*}
it is enough to show that
\begin{align*}
	\lim_{n \to \infty} \Prob{P_n \ne Q_n, A_n} = 0.
\end{align*}
Take $\hat{P}_n$ to be mixed Poisson with parameter $X_n + \sqrt{a_n}$. In addition, let $V_n$ and $Z_n$ be mixed Poisson
with parameter $\min\{X_n + \sqrt{a_n} - Y_n, 0\}$ and $\sqrt{a_n}$, respectively. Then, since on $A_n$ we have
$X_n + \sqrt{a_n} > Y_n$ we get, using Markov's inequality,
\begin{align*}
	\Prob{P_n \ne Q_n, A_n} &\le \Prob{P_n \ne Q_n, P_n = \hat{P}_n, A_n} + \Prob{\hat{P}_n \ne P_n, A_n}\\
	&= \Prob{\hat{P}_n \ne Q_n, A_n} + \Prob{Z_n \ne 0, A_n}
	= \Prob{V_n \ne 0, A_n} + \Prob{Z_n \ne 0, A_n} \\
	&\le \Prob{V_n \ge 1, A_n} + \Prob{Z_n \ge 1, A_n}
	\le \Exp{|X_n + \sqrt{a_n} - Y_n|} + \sqrt{a_n}\\
	&\le \Exp{|X_n - Y_n|} + 2 \sqrt{a_n} = O(\sqrt{a_n}). 
\end{align*}
Since $a_n \to 0$ by assumption, this finishes the proof.
\end{proof}

Next we show that $\widehat{W}(X, Y)$ converges in expectation to $\widehat{w}_n(X)$ when $X, Y$ have distribution
$\mu_n$. We also establish an upper bound on the rate of convergence, showing that it converges faster than $n^{-1}$.

\begin{lemma}\label{lem:concentration_W_hat_w}
Let $X, Y$ be independent random variables with density $\mu_n$. Then, as $n \to \infty$,
\begin{align*}
	\Exp{\left|\widehat{W}(X,Y) - \widehat{w}_n(X)\right|} = O\left(\log(n)n^{-\gamma} + n^{-\frac{1 + \gamma}{2}}\right)
\end{align*}
\end{lemma}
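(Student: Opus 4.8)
The plan is to write the expectation as a double integral and then reduce it to the positive quadrant, exactly as in the proofs of Lemma~\ref{lem:integration_W_W_hat} and Proposition~\ref{prop:concentration_W_W_hat}, exploiting that there both functions are explicit exponentials. Concretely, I would start from
\[
	\Exp{\left|\widehat{W}(X,Y)-\widehat{w}_n(X)\right|}=\iint_{-\infty}^{R_n}\left|\widehat{W}(x,y)-\widehat{w}_n(x)\right|\,d\mu_n(y)\,d\mu_n(x).
\]
Since $\widehat{w}_n$ vanishes off $(0,R_n]$ and $0\le\widehat{W}\le1$, on the strips where $x\le0$ or $y\le0$ the integrand is dominated by $\widehat{W}(x,y)+\widehat{w}_n(x)\mathbf{1}_{\{x>0\}}$, so these contributions are controlled by the estimates already established in \eqref{eq:mu_n_R_n_0}, \eqref{eq:integration_hat_W_negative} and \eqref{eq:integration_hat_W_mixed}. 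These furnish the $O(n^{-(1+\gamma)/2})$ term (from the mixed strip) together with the $O(\log(n)n^{-\gamma})$ corner contributions of the same type as the first integral in Proposition~\ref{prop:concentration_Gibbs_W_W_hat}.

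On $(0,R_n]^2$ we have $\widehat{W}(x,y)=e^{-(x+y)}$ and $\widehat{w}_n(x)=\omega_n e^{-x}$, so the integrand factorizes and the bulk reduces to a one-dimensional integral:
\[
	\iint_0^{R_n}\left|\widehat{W}(x,y)-\widehat{w}_n(x)\right|\,d\mu_n(y)\,d\mu_n(x)=\left(\int_0^{R_n}e^{-x}\,d\mu_n(x)\right)\int_0^{R_n}\left|e^{-y}-\omega_n\right|\,d\mu_n(y)=\omega_n\int_0^{R_n}\left|e^{-y}-\omega_n\right|\,d\mu_n(y).
\]
The structural fact to exploit is that $\omega_n=\int_0^{R_n}e^{-y}\,d\mu_n(y)$ is, up to the negligible mass $\mu_n(\R_-)=O(n^{-\gamma/2})$, the $\mu_n$-mean of $e^{-Y}$, so the \emph{signed} integral telescopes to $\int_0^{R_n}(e^{-y}-\omega_n)\,d\mu_n(y)=\omega_n\,\mu_n(\R_-)=O(n^{-(1+\gamma)/2})$. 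I would then try to transfer this cancellation to the absolute value by splitting at the crossover $y^\ast=\log(1/\omega_n)\approx R_n-\log(1/\beta)$, where $e^{-y}=\omega_n$, and bounding the two monotone pieces separately against the rapidly growing density $d\mu_n\propto e^{\gamma y}\,dy$.

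The main obstacle is precisely this last step, and I expect it to be genuinely delicate rather than routine. Passing from the signed difference to $|e^{-y}-\omega_n|$ destroys the telescoping: because $d\mu_n$ places mass of order one within a bounded window below $R_n$, on which $e^{-y}$ and $\omega_n$ differ by a fixed factor and hence by an amount $\Theta(\omega_n)$, the absolute integral $\int_0^{R_n}|e^{-y}-\omega_n|\,d\mu_n(y)$ is a priori only $\Theta(\omega_n)=\Theta(n^{-1/2})$, which feeds back a bulk contribution of order $\omega_n^2=\Theta(n^{-1})$. This reflects that $e^{-Y}$ does \emph{not} concentrate around its $\mu_n$-mean (its standard deviation is of the same order as the mean), so the crude triangle inequality cannot reach the claimed rate for $\gamma>1$.

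Overcoming this therefore requires keeping the sign structure of $\widehat{W}(x,y)-\widehat{w}_n(x)$ intact rather than bounding each quantity separately. A promising route is to compare $\widehat{W}-\widehat{w}_n$ with $W-w_n$, where $w_n(x)=\int_{A_n}W(x,y)\,d\mu_n(y)$ is an exact conditional mean, controlling the replacement errors by Proposition~\ref{prop:concentration_W_W_hat} (and the same bound applied under $\mu_n$ to $\widehat{w}_n-w_n$), and then arguing that the dominant $\Theta(n^{-1})$ term is a centered fluctuation that is seen to cancel, leaving only the boundary errors $O(\log(n)n^{-\gamma}+n^{-(1+\gamma)/2})$. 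Verifying that such a cancellation survives at the level of the absolute value—rather than re-introducing the $\Theta(n^{-1})$ mean-absolute-deviation—is the crux of the argument and the point I would expect to demand the most care.
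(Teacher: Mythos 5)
Your analysis is correct, and the obstruction you identify at the end is not a defect of your own approach but an error in the paper. The paper's proof follows exactly the route you outline: it reduces the bulk contribution to $\omega_n\int_0^{R_n}\left|e^{-y}-\omega_n\right|d\mu_n(y)$ and evaluates the absolute integral in closed form by splitting at the crossover $y=-\log\omega_n$. In simplifying the piece over $[0,-\log\omega_n]$, however, the term $\omega_n^{1-\gamma}e^{-\gamma R_n}/(\gamma-1)$ is transcribed as $\omega_n e^{-\gamma R_n}/(\gamma-1)$; since $\omega_n^{1-\gamma}e^{-\gamma R_n}=\Theta(n^{-1/2})$ while $\omega_n e^{-\gamma R_n}=\Theta(n^{-(1+\gamma)/2})$, this silently discards exactly the mean-absolute-deviation contribution you computed (the two pieces of the split are also written with their signs interchanged). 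A correct evaluation gives $\int_0^{R_n}\left|e^{-y}-\omega_n\right|d\mu_n(y)=\Theta(e^{-R_n})=\Theta(n^{-1/2})$: as you argue, a $\mu_n$-mass of order one lies within an $O(1)$ window below $R_n$, where $e^{-y}$ and $\omega_n\sim\beta^{-1}e^{-R_n}$ differ by $\Theta(e^{-R_n})$. Hence $\Exp{\left|\widehat{W}(X,Y)-\widehat{w}_n(X)\right|}=\Theta(n^{-1})$, and the bound claimed in the lemma is false for every $\gamma>1$, since $n^{-(1+\gamma)/2}+\log(n)n^{-\gamma}=o(n^{-1})$.

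For the same reason your proposed repair cannot succeed: no comparison of $\widehat{W}-\widehat{w}_n$ with $W-w_n$, and no retention of sign structure, can establish a bound that the left-hand side itself violates. The honest conclusion is $O(n^{-1})$, and the damage then surfaces at the one place the lemma is used with a factor of $n$, namely the coupling step in the proof of Theorem~\ref{thm:mixed_poisson_degrees_hscm}, where $(n-1)\Exp{\left|\widehat{W}(X_1,X_2)-\widehat{w}_n(X_1)\right|}$ becomes $O(1)$ rather than $o(1)$. That step is rescued not by a sharper per-pair estimate but by the cancellation across $j$ that you gesture at: conditionally on $X_1\in[0,R_n]$ one has $\sum_{j\ge2}\widehat{W}(X_1,X_j)-(n-1)\widehat{w}_n(X_1)=e^{-X_1}\bigl(\sum_{j\ge2}e^{-X_j}\mathbf{1}_{\{X_j\ge0\}}-(n-1)\omega_n\bigr)$ up to the usual negligible boundary terms, and the centered sum has variance at most $(n-1)\int_0^{R_n}e^{-2x}d\mu_n(x)=O\left(n^{1-\gamma/2}+\log n\right)$, so Cauchy--Schwarz gives $\Exp{\left|\sum_{j\ge2}\widehat{W}(X_1,X_j)-(n-1)\widehat{w}_n(X_1)\right|}\le\omega_n\,O\left(n^{\frac12-\frac{\gamma}{4}}+\sqrt{\log n}\right)=O\left(n^{-\gamma/4}+\sqrt{\log(n)/n}\right)\to0$. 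In short: the triangle inequality must be avoided \emph{outside} the lemma, over the index $j$, rather than the absolute value being avoided inside it, and the lemma as stated should be replaced by this concentration estimate for the sum.
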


\begin{proof}
Recall that
\begin{align*}
	\widehat{w}_n(x) &= \begin{cases}
		\omega_n e^{-x} &\mbox{if } 0 \le x \le R_n\\
		0 &\mbox{otherwise}
	\end{cases}, 
	\text{ where }\\
    \omega_n &= \int_{0}^{R_n} e^{-x} \, d\mu_n(x) = \frac{1-e^{-(\gamma-1) R_n}}{\beta e^{R_n}}.
\end{align*}
Hence, it follows that
\begin{align*}
	\Exp{\left|\widehat{W}(X,Y) - \widehat{w}_n(X)\right|} 
	&= \int_{-\infty}^0 \int_{-\infty}^{R_n} \widehat{W}(x,y) \, d\mu_n(y) \, d\mu_n(x) \\
	&\hspace{10pt}+ \int_{0}^{R_n} \int_{-\infty}^{R_n} \left|\widehat{W}(x,y) - \widehat{w}_n(x)\right| \, d\mu_n(y) \, d\mu_n(x),
\end{align*}
where the first integral is $O(n^{-(\gamma + 1)/2})$ by Lemma \ref{lem:integration_W_W_hat}.

To deal with the second integral we first compute
\begin{align*}
	\int_{0}^{R_n} \int_{-\infty}^{-x} \, d\mu_n(y) \, d\mu_n(x) = \gamma R_n e^{-2 \gamma R_n} = O\left(\log(n) n^{-\gamma}\right).
\end{align*}
Therefore we have
\begin{align*}
	&\hspace{-30pt}\int_{0}^{R_n} \int_{-\infty}^{R_n} \left|\widehat{W}(x,y) - \widehat{w}_n(x)\right| \, d\mu_n(y) d\mu_n(x)\\
	&= \int_{0}^{R_n} \int_{-\infty}^{-x} \, d\mu_n(y) d\mu_n(x)
		+ \int_{0}^{R_n} \int_{-x}^{R_n} \left|e^{-(x + y)} - \omega_n e^{-x}\right| \, d\mu_n(y) d\mu_n(x)\\
	&\le O\left(\log(n) n^{-\gamma}\right)
		+ \int_0^{R_n} \int_0^{R_n} e^{-x}\left|e^{-y} - \omega_n\right| \, d\mu_n(y) d\mu_n(x) \\
	&= O\left(\log(n) n^{-\gamma}\right) + \omega_n \int_0^{R_n} \left|e^{-y} - \omega_n\right| \, d\mu_n(y) 
\end{align*}
We proceed to compute the last integral and show that it is $O(n^{-(\gamma + 1)/2})$. For this we note that since
\begin{align*}
	e^{-y} \le \omega_n \iff y \le -\log(\omega_n),
\end{align*}
we have
\begin{align*}
	\int_0^{R_n} \left|e^{-y} - \omega_n\right| \, d\mu_n(y)
	&= \int_0^{-\log(\omega_n)} (\omega_n - e^{-y}) \, d\mu_n(y) + \int_{-\log(\omega_n)}^{R_n} (e^{-y} - \omega_n) \, d\mu_n(y)
\end{align*}
For the first integral we compute
\begin{align*}
	\int_0^{-\log(\omega_n)} (\omega_n - e^{-y}) \, d\mu_n(y)
	&= \gamma e^{-\gamma R_n} \left(\frac{\omega_n^{1-\gamma}}{\gamma} - \frac{\omega_n^{1 - \gamma}}{\gamma - 1}
		- \frac{\omega_n}{\gamma} + \frac{1}{\gamma -1}\right) \\
	&= \frac{e^{-\gamma R_n}}{\beta} - \frac{\omega_n e^{-\gamma R_n}}{\gamma - 1} - \omega_n e^{-\gamma R_n} 
	= \frac{e^{-\gamma R_n}}{\beta} - \frac{\omega_n e^{-\gamma R_n}}{\beta}.
\end{align*}
Similar calculations yield
\begin{align*}
	\int_{-\log(\omega_n)}^{R_n} (e^{-y} - \omega_n) \, d\mu_n(y)
	= \frac{e^{-R_n}}{\beta} - \omega_n - \frac{\omega_n^{1-\gamma} e^{-\gamma R_n}}{\gamma - 1},
\end{align*}
and hence, 
\begin{align*}
	\int_0^{R_n} \left|e^{-y} - \omega_n\right| \, d\mu_n(y)
	&= \frac{1}{\beta}\left(e^{-R_n} + e^{-\gamma R_n}\right) - \omega_n -  \frac{\omega_n e^{-\gamma R_n}}{\beta}
		-  \frac{\omega_n^{1-\gamma} e^{-\gamma R_n}}{\gamma - 1}\\
	&\le \frac{1}{\beta}\left(e^{-R_n} + e^{-\gamma R_n}\right) - \omega_n. 
\end{align*}
We now use this last upper bound, together with
\begin{align*}
	\omega_n = \frac{1-e^{-(\gamma-1) R_n}}{\beta e^{R_n}} = \frac{1}{\beta}\left(e^{-R_n} - e^{-\gamma R_n}\right),
\end{align*}
to obtain
\begin{align*}
	 \omega_n \int_0^{R_n} \left|e^{-y} - \omega_n\right| \, d\mu_n(y)
	 &\le \frac{\omega_n}{\beta}\left(e^{-R_n} + e^{-\gamma R_n}\right) - \omega_n^2 \\
	 &= \frac{1}{\beta^2}\left(e^{-2R_n} + e^{-\gamma 2R_n}\right)
	 	- \frac{1}{\beta^2}\left(e^{-R_n} - e^{-\gamma R_n}\right)^2\\
	 &= \frac{2}{\beta^2} e^{-(\gamma + 1)R_n} = O\left(n^{-\frac{1 + \gamma}{2}}\right),
\end{align*}
from which the result follows.
\end{proof}

\subsubsection{Proof of Theorem \ref{thm:mixed_poisson_degrees_hscm}}\label{sssec:proof_degree_distribution_hscm}

We start with constructing the coupling $(\widehat{\mathscr{D}}_n, \widehat{P}_n)$, between 
$\mathscr{D}_n$ and $P_n$ such that
\begin{equation}\label{eq:coupling_D_P}
	\lim_{n \to \infty} \Prob{\widehat{\mathscr{D}}_n \ne \widehat{P}_n} = 0.
\end{equation}
First, let $I_{j}$ be the indicator that the edge $(1,j)$ is present in $G_n$. Let ${\bf X}_n = X_1, \dots, X_n$ denote the
coordinates of the nodes. Then, conditioned on these, we have that $I_{j}$ are independent Bernoulli random variables with probability
$W(X_1, X_j)$, while $D_1 = \sum_{j = 2}^n I_{j}$. Now, let $Q_n$ be a
mixed Poisson with parameter $\sum_{j = 2}^n W(X_1, X_j)$. Then, see for instance \cite[Theorem 2.10]{hofstad2016random},
there exists a coupling $(\widehat{D}_1, \widehat{Q}_n)$ of $D_1$ and $Q_n$, such that
\begin{align*}
	\CProb{\widehat{D}_1 \ne \widehat{Q}_n}{{\bf X}_n} \le \sum_{j = 2}^n W(X_1, X_j)^2.
\end{align*}
Therefore, we get that
\begin{align*}
	\Prob{\widehat{D}_1 \ne \widehat{Q}_n}
	&\le (n - 1) \iint_{-\infty}^{R_n} W(x, y)^2 \, d\mu_n(y) \, d\mu_n(x) \\
	&\le n \left(\int_{-\infty}^{R_n} e^{-2x} \, d\mu_n(x)\right)^2
	= \begin{cases}
			O\left(n^{-(\gamma - 1)}\right) &\mbox{if } 1 < \gamma < 2\\
			O\left(\log(n) n^{-1}\right) &\mbox{if } \gamma = 2\\
			O\left(n^{-1}\right) &\mbox{if } \gamma > 2
		\end{cases}. 
\end{align*}

Next, since $X_1$ and $X_j$ are independent for all $2 \le j \le n$ we use Proposition \ref{prop:concentration_W_W_hat} together with 
Lemma \ref{lem:concentration_W_hat_w} to obtain
\begin{align*}
	&\hspace{-10pt}\Exp{\left|\sum_{j = 2}^n W(X_1,X_j) - n \widehat{w}_n(X_1)\right|}\\
	&\le (n-1)\Exp{\left|W(X_1,X_2) - \widehat{w}_n(X_1)\right|} + \Exp{\widehat{w}_n(X_1)}\\
	&\le (n - 1)\Exp{\left|W(X_1, X_2) - \widehat{W}(X_1, X_2)\right|}
		+ (n - 1)\Exp{\left|\widehat{W}(X_1, X_2) - \widehat{w}_n(X_1)\right|} + \Exp{\widehat{w}_n(X_1)} \\
	&= O\left(n^{-\frac{\gamma - 1}{2}}\right).
\end{align*}
from which it follows that
\begin{equation}\label{eq:convergence_W_n_w_hat}
	\lim_{n \to \infty} \Exp{\left|\sum_{j = 2}^n W(X_1,X_j) - n \widehat{w}_n(X_1)\right|} = 0.
\end{equation}
Now let $\widehat{P}_n$ be a mixed Poisson random variable with mixing parameter $n \widehat{w}_n(X_1)$. Then by 
\eqref{eq:convergence_W_n_w_hat} and Lemma 
\ref{lem:coupling_mixed_poisson_converging_rvs}
\begin{align*}
	\lim_{n \to \infty} \Prob{\widehat{P}_n \ne \widehat{Q}_n} = 0
\end{align*}
and \eqref{eq:coupling_D_P} follows from
\begin{align*}
	\Prob{\widehat{\mathscr{D}}_n \ne \widehat{P}_n} \le \Prob{\widehat{\mathscr{D}}_n \ne \widehat{Q}_n}
	+ \Prob{\widehat{P}_n \ne \widehat{Q}_n}.
\end{align*}
As a result we have that
\begin{align*}
	\lim_{n \to \infty} \left|\Prob{\mathscr{D}_n = k} - \Prob{P_n = k}\right|
	&= \lim_{n \to \infty} \left|\Prob{\widehat{\mathscr{D}}_n = k} - \Prob{\widehat{P}_n = k}\right| \\
	&= \lim_{n \to \infty} \Prob{\widehat{\mathscr{D}}_n = k, \widehat{\mathscr{D}}_n \ne \widehat{P}_n}
	\le \lim_{n \to \infty} \Prob{\widehat{\mathscr{D}}_n \ne \widehat{P}_n} = 0, \numberthis \label{eq:convergence_D_P_distribution}
\end{align*}

We will now prove that if $X$ has density $\mu_n$, then for any $t > 0$
\begin{equation}\label{eq:convergence_degree_mixing_paramters}
	\lim_{n \to \infty} \left|\Prob{n\widehat{w}_n(X) > t} - \Prob{Y > t}\right| = 0.
\end{equation}
A sequence $\{P_n\}_{n \ge 1}$ of mixed Poisson random variables, with mixing parameters $Z_n$ converges to a mixed Poisson random
variable $P$ with mixing parameter $Z$, if $Z_n$ converge to $Z$ in distribution, see for instance \cite{grandell1997mixed}.
Therefore, since $\mathscr{D}$ is mixed Poisson with mixing parameter $Y$, \eqref{eq:convergence_degree_mixing_paramters} implies
\begin{align*}
	\lim_{n \to \infty} \Prob{P_n = k} = \Prob{\mathscr{D} = k} 
\end{align*}
which, combined with \eqref{eq:convergence_D_P_distribution}, yields
\begin{align*}
	\lim_{n \to \infty} \Prob{\mathscr{D}_n = k} = \Prob{\mathscr{D} = k}.
\end{align*}

To establish \eqref{eq:convergence_degree_mixing_paramters}, first define
\begin{align*}
	\epsilon_n = e^{-(\gamma - 1)R_n} - e^{-2\gamma R_n}
\end{align*}
so that 
\begin{align*}
	n \widehat{w}_n(x) = \frac{n}{\beta e^{R_n}} \left(1 - e^{-(\gamma - 1)R_n}\right) = \sqrt{\nu n} \, e^{-x} \left(1 - \epsilon_n\right).
\end{align*}
Next, for all $0 \le x \le R_n$ we have that
\begin{align*}
	\nu \beta \left(1 - \epsilon_n\right) \le n \widehat{w}_n(x) \le \sqrt{\nu n} \, \left(1 - \epsilon_n\right)
\end{align*}
and hence
\begin{align*}
	\Prob{n \hat{w}_n(X) > t} = \begin{cases}
		\Prob{X < \log\left(\frac{n \omega_n}{t}\right)} &\mbox{if } \nu \beta \left(1 - \epsilon_n\right) \le t \le \sqrt{\nu n} \, \left(1 - \epsilon_n\right) \\
		1 &\mbox{if } t < \nu \beta \left(1 - \epsilon_n\right) \\
		0 &\mbox{else.}
	\end{cases}
\end{align*}
Moreover, for any $\nu \beta \left(1 - \epsilon_n\right) \le t \le \sqrt{\nu n} \, \left(1 - \epsilon_n\right)$
\begin{align*}
	\Prob{n\widehat{w}_n(X) > t} &= \Prob{X < \log\left(\frac{n \omega_n}{t}\right)}
	= \int_{-\infty}^{\log\left(\frac{n \omega_n}{t}\right)} \, d\mu_n(x) \\
	&= e^{-\gamma R_n}\left(\frac{n \omega_n}{t}\right)^{\gamma} 
	= \left(n \omega_n e^{-R_n}\right)^\gamma t^{-\gamma}
	= \left(\frac{\nu \beta}{t}\right)^{\gamma} (1 - \epsilon_n)^\gamma. \numberthis \label{eq:tail_probability_n_hat_w_X}
\end{align*}

Now, fix $t < \nu \beta$. Then, for large enough $n$ it holds that $t < \nu \beta (1 - \epsilon_n)$ in which case \eqref{eq:convergence_degree_mixing_paramters}
holds trivially, since both probabilities are $1$. Hence we can assume, without loss of generality that $t \ge \nu \beta$. Then for $n$ large enough such $t \le \sqrt{\nu n} \, \left(1 - \epsilon_n\right)$, it follows from \eqref{eq:tail_probability_n_hat_w_X} and \eqref{eq:pareto_tail_cdf} that
\begin{align*}
	\left|\Prob{n\widehat{w}_n(X) > t} - \Prob{Y > t}\right|
	&= \left|\left(\frac{\nu \beta}{t}\right)^{\gamma} (1 - \epsilon_n)^\gamma - \left(\frac{\nu \beta}{t}\right)^\gamma\right|\\
	&= \left(\frac{\nu \beta}{t}\right)^{\gamma} \left(1 - (1 - \epsilon_n)^\gamma\right) \le 1 - (1 - \epsilon_n)^\gamma,
\end{align*}
which converges to zero as $n \to \infty$ and hence proves \eqref{eq:convergence_degree_mixing_paramters}.

\subsubsection{Proof of Theorem \ref{thm:average_degree_hscm}}\label{sssec:proof_average_degree_hscm}
 
First using Lemma \ref{lem:integration_W_W_hat}, we compute that
\begin{align*}
	(n-1)\Exp{\widehat{W}(X,Y)} 
	&=(n - 1) \iint_{-\infty}^{R_n} \widehat{W}(x,y) \, d\mu_n(y) \, d\mu_n(x)\\
	&= (n - 1)\iint_0^{R_n} e^{-(x + y)} \, d\mu_n(y) \, d\mu_n(x) + O\left(n^{-\frac{\gamma - 1}{2}}\right)\\
	&= \frac{(n-1)}{\beta^2} e^{-2 R_n}\left(1 - e^{-\gamma R_n}\right)^2 + O\left(n^{-\frac{\gamma - 1}{2}}\right)\\
	&= \nu \left(1 - \frac{1}{\beta^2 n}\right)\left(1 - e^{-\gamma R_n}\right)^2 + O\left(n^{-\frac{\gamma - 1}{2}}\right)
	= \nu + O\left(n^{-\frac{\gamma - 1}{2}}\right), 
\end{align*}
where the last line follows since $-1 < -\gamma/2 < -(\gamma-1)/2$.

Next recall \eqref{eq:average_degree_hscm}
\begin{align*}
	\Exp{\mathscr{D}_n} = \sum_{j = 2}^n \Exp{W(X_1, X_j)} = (n - 1)\Exp{W(X,Y)}.
\end{align*}
Therefore, using Lemma \ref{lem:concentration_W_hat_w} we have
\begin{align*}
	\left|\Exp{\mathscr{D}_n} - \nu\right| &= \left|(n - 1)\Exp{W(X,Y)} - \nu\right|\\
	&\le (n-1)\Exp{\left|W(X,Y) - \widehat{W}(X,Y)\right|} + (n-1)\left|\Exp{\widehat{W}(X,Y)} - \nu\right|\\
	&\le O\left(\log(n)n^{-(\gamma - 1)}\right) + O\left(n^{-\frac{\gamma - 1}{2}}\right), 
\end{align*}
which yields the result.

\subsection{Proofs for graphon entropy}\label{ssec:proof_graphon_entropy}

Here we derive the properties of the graphon entropy $\sigma$ of our graphon $W$ given by \eqref{eq:def_hscm_graphon}. We first give the
proof of Proposition \ref{prop:max_entropy_graphon}, and then that of Theorem \ref{thm:convergence_graphon_entropy}.

\subsubsection{Proof of Proposition \ref{prop:max_entropy_graphon}}\label{sssec:proof_max_entropy_graphon}

Recall that, given a measure $\mu$ on some interval $A \subseteq \R$ with $\mu(A) < \infty$ and a $\mu$-integrable function $w(x)$, we consider the problem of maximizing 
$\sigma[W, \mu]$ under the constraint \eqref{eq:def_graphon_entropy_constraint},
\begin{align*}
	w(x) = \int_A W(x,y) \, d\mu(y).
\end{align*}
In particular we need to show that if a solution exists, it is given by \eqref{eq:def_max_entropy_graphon_solution}. Therefore, suppose
there exists at least one graphon $W$ which satisfies the constraint. For this we use the technique of Lagrange multipliers from variation 
calculus \cite{aubin2000applied,gelfand2000calculus}.

To set up the framework, let $\mathcal{W}$ denote the space of symmetric functions $W: A \times A \to [0,1]$ which satisfy 
\begin{align*}
	\iint_{A^2} W(x,y) \, d\mu(y) \, d\mu(x) < \infty.
\end{align*}
Observe that $\mathcal{W}$ is a convex subset of the Banach space $\mathcal{W}_{\mathbb{R}}$, of all symmetric, $\mu$-integrable, functions
$W: A \times A \to \mathbb{R}$ and that the function $w$ is an element of $L^1(-\infty,R)$ with respect to the measure $\mu$,
which is also Banach. Denote the latter space by $L^1_{A, \mu}$. We slightly abuse notation and write $\sigma$ for the
functional 
\begin{align*}
	W \mapsto \iint_{A^2} H(W(x,y)) \, d\mu(y) \, d\mu(x) = \sigma[W,\mu],
\end{align*} 
and define the functional 
$\mathcal{F} : \mathcal{W}_{\mathbb{R}} \to L^1_{A, \mu}$ by
\begin{align*}
	\mathcal{F}(W)(x) = w(x) - \int_A W(x,y) \, d\mu(y).
\end{align*}
Then we need to solve the following Euler-Lagrange equation for some Lagrange multiplier functional $\lambda: L^1_{A, \mu} \to \mathbb{R}$,
\begin{align*}
	\frac{\partial}{\partial W} \left(\sigma(W) - \Lambda \circ \mathcal{F}(W)\right) = 0,
\end{align*}
with respect to the Fr\'{e}chet derivative. By Riesz Representation Theorem, we
have that for any functional $\Lambda: L^1_{A, \mu} \to \mathbb{R}$, there
exists a $\lambda \in L^\infty$, uniquely defined on $A$, such that
\begin{align*}
	\Lambda(f) = \int_A \lambda(x) f(x) \,d\mu(x).
\end{align*}
Hence our Euler-Lagrange equation becomes
\begin{equation}\label{eq:initial_graphon_lagrangian}
	\frac{\partial}{\partial W} \left(\sigma(W) - \int_A\lambda(x)\left(w(x) - \int_A W(x,y) \, d\mu(y)\right)\,d\mu(x)\right) = 0.
\end{equation}
In addition, since $W$ is symmetric we have
\begin{align*}
	\iint_{A^2} \lambda(x) W(x,y) \, d\mu(y) d\mu(x)
	= \frac{1}{2} \iint_{A^2} (\lambda(x) + \lambda(y)) W(x,y) \, d\mu(y) \, d\mu(x)
\end{align*}
and hence, by absorbing the factor $1/2$ into $\lambda$ we can rewrite our Euler-Lagrange equation as
\begin{equation}\label{eq:graphon_lagrangian}
	\frac{\partial}{\partial W} \left(\sigma(W) - \int_A \lambda(x)w(x) \, d\mu(x)
	+ \iint_{A^2} (\lambda(x) + \lambda(y))W(x,y) \, d\mu(y) \, d\mu(x)\right) = 0.
\end{equation}
For the two derivatives we have
\begin{align*}
	&\frac{\partial \sigma(W)}{\partial W} = \log(1 - W(x,y)) - \log(W(x,y))
		= -\log\left(\frac{W(x,y)}{1 - W(x,y)}\right),\\
	&\frac{\partial}{\partial W} \left(\int_A \lambda(x)w(x) \, d\mu(x)
		+ \iint_{A^2} (\lambda(x) + \lambda(y))W(x,y) \, d\mu(y) \, d\mu(x)\right)
		= \lambda(x) + \lambda(y). \numberthis \label{eq:derivative_graphon_contraint}
\end{align*}
Hence, we need to solve the equation
\begin{align*}
	-\log\left(\frac{W(x,y)}{1 - W(x,y)}\right) = \lambda(x) + \lambda(y),
\end{align*}
which gives \eqref{eq:def_max_entropy_graphon_solution}.

There is, however, a small technicality related to the computation of the derivative \eqref{eq:derivative_graphon_contraint}. This is caused by the 
fact that $H$ is only defined on $[0,1]$. In particular, for $W_1, W_2 \in \mathcal{W}$, it could be that $W_1 + W_2 > 1$
on some subset $C \subseteq A \times A$ and hence $H(W_1 + W_2)$ is not well defined. To compute
the Fr\'{e}chet derivative we need $H(W_1 + \varepsilon W_2)$ to be well defined for any $W_1, W_2 \in
\mathcal{W}$ and some, small enough, $\varepsilon > 0$. To this end we fix a $0 < \delta < 1$ and define
\begin{align*}
	H_\delta(x) = H\left((1 - \delta)x + \frac{\delta}{2}\right),
\end{align*}
which is just $H$, stretched out to the interval $(-\delta/2(1 - \delta), 1 + \delta/2(1 - \delta))$.
Similarly we define $\sigma_\delta$, using $H_\delta$ and consider the corresponding graphon
entropy maximization problem. Then we can compute $\partial \sigma_\delta /\partial W$, by taking
a $W^\prime \in \mathcal{W}$ and $\varepsilon > 0$ such that $W + \varepsilon W^\prime
<  1 + \delta/2(1 - \delta))$. Then $H_\delta(W + \varepsilon W^\prime)$ is well-defined and
using the chain rule we obtain
\begin{align*}
	\frac{\partial H_\delta(W + \varepsilon W^\prime) }{\partial \varepsilon}
	= -(1 - \delta) W^\prime \log\left(\frac{(1 - \delta)(W + \varepsilon W^\prime) + \delta/2}
	{1 - (1 - \delta)(W + \varepsilon W^\prime) - \delta/2}\right), 
\end{align*}
from which it follows that
\begin{align*}
	\frac{\partial \sigma_\delta(W)}{\partial W} = -(1 - \delta)\log\left(\frac{(1 - \delta)W + \delta/2}
	{1 - (1 - \delta)W - \delta/2}\right).
\end{align*}
Therefore we have the following equation
\begin{align*}
	-(1 - \delta) \log\left(\frac{(1 - \delta)(W + \delta/2}
	{1 - (1 - \delta)W - \delta/2}\right)
	= \lambda(x) + \lambda(y),
\end{align*}
which leads to a solution $W_\delta$ of the form
\begin{align*}
	W_\delta(x,y) = \frac{1 - \frac{\delta}{2}
		\left(1 + e^{\frac{\lambda(x) + \lambda(y)}{(1 - \delta)}}\right)}
		{(1-\delta)\left(1
		+ e^{\frac{\lambda(x) + \lambda(y)}{(1 - \delta)}}\right)}.
\end{align*}
Since $\delta < 1$ it follows, using elementary algebra, that $W_\delta(x,y) \in [0,1]$
for all $x,y \in [0,1]$. From this we conclude that $W_\delta \in \mathcal{W}$. Moreover
$W_\delta$ converges to \eqref{eq:def_max_entropy_graphon_solution} as $\delta \to 0$. Since
$\sigma_\delta \to \sigma$ as $\delta \to 0$, we obtain the graphon that maximizes the
entropy $\sigma$, where the function $\lambda$ is determined by the constraint
\eqref{eq:def_graphon_entropy_constraint}.

For uniqueness, suppose there exist two solutions, $\lambda_1$ and $\lambda_2$ in $L_{A,\mu}^1$, to \eqref{eq:def_graphon_entropy_constraint}
and for which the graphon entropy is maximized. Let $f(x) = \lambda_1(x) - \lambda_2(x) \in L_{A, \mu}^1$, so that $\lambda_1 = \lambda_2 + f$. Since $\lambda_1$
satisfies \eqref{eq:initial_graphon_lagrangian} it follows that, due to
linearity of the derivative,
\begin{align*}
	\frac{\partial}{\partial W} \int_{A} f(x)W(x,y) \, d\mu(y) \, d\mu(x) = 0.
\end{align*}
Now since
\begin{align*}
	\frac{\partial}{\partial W} \int_{A} f(x)W(x,y) \, d\mu(y) \, d\mu(x) = f(x),
\end{align*}
it follows that $f = 0$, $\mu$ almost everywhere on $A$ and hence $\lambda_1 = \lambda_2$, $\mu$ almost everywhere on $A$.  

\subsubsection{Proof of Theorem \ref{thm:convergence_graphon_entropy}}\label{sssec:proof_convergence_graphon_entropy}

First note that Proposition \ref{prop:concentration_Gibbs_W_W_hat} implies that the difference in expectation between 
$H(W)$ and $H(\widehat{W})$ converges to zero faster than $\log(n)/n$, so that for the purpose of Theorem 
\ref{thm:convergence_graphon_entropy} we can approximate $W$ with $\widehat{W}$. Hence we are left to show that 
the rescaled entropy $n \sigma[\widehat{W}, \mu_n]/\log(n)$ converges to $\nu$. By Lemma \ref{lem:integration_H_W_W_hat} 
the integration over all regimes except $[0, R_n]^2$ goes to zero faster than $\log(n)/n$ and therefore we
only need to consider integration over $[0, R_n]^2$. The main idea for the rest of the proof is that in this range
\begin{align*}
	H\left(\widehat{W}(x,y)\right) \approx (x+y)\widehat{W}(x,y).
\end{align*}

Let us first compute the integral over $[0, R_n]^2$ of the right hand side in the above equation.
\begin{align*}
	&\hspace{-20pt} \iint_0^{R_n} (x + y) \widehat{W}(x,y) \, d\mu_n(y) \, d\mu_n(x)\\
	&= \gamma^2 e^{-2\gamma R_n}\iint_0^{R_n} (x + y) e^{(\gamma - 1)(x + y)} \, dy \, dx \\
	&= \frac{2 e^{-2(1 + \gamma) R_n}}{\beta^2 (\gamma - 1)}\left((\gamma - 1)R_n \left(e^{2 \gamma R_n} + e^{(\gamma + 1)R_n}\right)
		- e^{2 R_n} - e^{2 \gamma R_n}\right) \\
	&= \frac{2 R_n }{\beta^2} \left(e^{-2R_n} + e^{-(\gamma + 1)R_n}\right)
		- \frac{2}{ \beta^2(\gamma - 1)} \left(e^{-2\gamma R_n} + e^{-2R_n}\right)\\
	&= \frac{2 R_n e^{-2R_n}}{\beta^2} + O(n^{-1})
	= \nu n^{-1} \log(n) + O(n^{-1}) 
\end{align*}
which implies that
\begin{equation}\label{eq:convergence_approx_W_hat_entropy}
	\lim_{n \to \infty} \frac{n}{\log(n)} \iint_0^{R_n} (x + y) \widehat{W}(x,y) \, d\mu_n(y) \, d\mu_n(x) = \nu.
\end{equation}

Next we show that
\begin{equation}\label{eq:bound_diff_H_hat_W_approx}
	\frac{n}{\log(n)}\left|\iint_0^{R_n} \left(H\left(\widehat{W}(x,y)\right) -(x + y)\widehat{W}(x,y)\right) \, d\mu_n(y) \, d\mu_n(x)\right|
	= O\left(\log(n)^{-1}\right),
\end{equation}
which, together with \eqref{eq:convergence_approx_W_hat_entropy}, gives
\begin{equation}\label{eq:convergence_W_hat_entropy_0_Rn}
	\lim_{n \to \infty} \frac{n}{\log(n)}\iint_0^{R_n} H\left(\widehat{W}(x,y)\right) \, d\mu_n(y) \, d\mu_n(x) = \nu.
\end{equation}
We compute that
\begin{align*}
	H\left(\widehat{W}(x,y)\right) &= e^{-(x+y)}(x + y)
		- (1 - e^{-(x + y)})\log\left(\frac{e^{x + y} - 1}{e^{x + y}}\right)\\
	&= (x + y)\widehat{W}(x,y) - (1 - e^{-(x + y)})\log\left(1-e^{-(x + y)}\right). \numberthis \label{eq:diff_H_W_hat_W}
\end{align*}
Note that $- (1 - e^{-z})\log\left(1-e^{-z}\right) \le e^{-z}$ for all $z \ge 0$. Hence, it follows from \eqref{eq:diff_H_W_hat_W} that, on $[0, R_n]^2$,
\begin{align*}
	\left|H\left(\widehat{W}(x,y)\right) -(x + y)\widehat{W}(x,y)\right|
	&= -\left(1 - e^{-(x + y)}\right)\log\left(1 - e^{-(x + y)}\right) \le e^{-(x+y)} = \widehat{W}(x,y),
\end{align*}
so that by Theorem \ref{thm:average_degree_hscm},
\begin{align*}
	&\hspace{-30pt}\frac{n}{\log(n)}\left|\iint_0^{R_n} \left(H\left(\widehat{W}(x,y)\right) -(x + y)\widehat{W}(x,y)\right) \, d\mu_n(y) \, d\mu_n(x)\right|\\
	&\le \frac{n}{\log(n)}\iint_0^{R_n} \widehat{W}(x,y) \, d\mu_n(y) \, d\mu_n(x) = O\left(\log(n)^{-1}\right).
\end{align*}

To conclude, we have
\begin{align*}
	\sigma[\widehat{W},\mu_n] &= \iint_{-\infty}^{R_n} H\left(\widehat{W}(x,y)\right) \, d\mu_n(y) d\mu_n(x) \\
	&= \iint_{0}^{R_n} H\left(\widehat{W}(x,y)\right) \, d\mu_n(y) d\mu_n(x) + O\left(n^{-\gamma} + \log(n)n^{-\frac{\gamma + 1}{2}}\right)
\end{align*}
and hence, using \eqref{eq:convergence_W_hat_entropy_0_Rn},
\begin{align*}
	\lim_{n \to \infty} \frac{n \sigma[\widehat{W}, \mu_n]}{\log(n)} = \nu.
\end{align*}

\subsection{Proof of Theorem \ref{thm:convergence_graph_entropy}}\label{ssec:proof_convergence_graph_entropy}

In this section we first formalize the strategy behind the proof of Theorem \ref{thm:convergence_graph_entropy},
briefly discussed in Section~\ref{ssec:convergence_graph_entropy}. This strategy relies on partitioning the interval $A_n$
into non-overlapping subintervals. We then construct a specific partition satisfying certain requirements, and
finish the proof of the theorem.

\subsubsection{Averaging $W$ by a partition of $A_n$}

We follow the strategy from \cite{janson2013graphons}. First recall that for a graph
$G_n$ generated by the HSCM,
\begin{align*}
	\mathcal{S}[G_n] \ge \Exp{\mathcal{S}[G_n| {\bf x}_n]} = \binom{n}{2} \Exp{H(W(x_1,x_2))} = \binom{n}{2} \sigma[G_n],
\end{align*}
and hence 
\begin{align*}
	\mathcal{S}^\ast[G_n] \ge \sigma[G_n],
\end{align*}
where $\mathcal{S}^\ast[G_n] = \mathcal{S}[G_n]/\binom{n}{2}$ denotes the normalized Gibbs entropy.

Therefore, the key ingredient is to find a matching upper bound. For this we partition the range
$A_n = (-\infty, R_n]$ of our probability measure $\mu_n(x)$ into intervals and approximate $W(x,y)$ by
its average over the box in which $x$ and $y$ lie.

To be more precise, let $m_n$ be any increasing sequence of positive integers, $\{\rho_t\}_{0 \le t \le m_n}$ be such that
\begin{align*}
	\rho_0 = -\infty, \quad \text{and} \quad \rho_{m_n} = R_n,
\end{align*}
and consider the partition of $(-\infty, R_n]$ given by
\begin{align*}
	I_t = (\rho_{t-1},\rho_t] \quad \text{for } 1 \le t \le m_n.
\end{align*}

Now define $J_n(x) = t \iff x \in I_t$, and let $M_i$ be the random variable $J_n(X_i)$, where $X_i$ has density function
$\mu_n$ for any vertex $i$. The value of $M_i$ equal to $t$ indicates that vertex $i$ happens to lie within interval $I_t$.
Denoting the vector of these random variable by $\textbf{M}_n=M_1,\ldots,M_n$, and their entropy by $\mathcal{S}[\textbf{M}_n]$,
we have that
\begin{align*}
	\mathcal{S}[G_n] &\le \mathcal{S}[\textbf{M}_n] + \mathcal{S}[G_n|\textbf{M}_n]\\
	&\le n \mathcal{S}[M] + \binom{n}{2} \iint_{-\infty}^{R_n} H(\widetilde{W}(x,y)) \, d\mu_n(y) \ d\mu_n(x),
\end{align*}
where $\widetilde{W}(x,y)$ is the average of $W$ over the square $I_{J_n(x)} \times I_{J_n(y)}$. That is,
\begin{equation}\label{eq:def_W_tilde}
	\widetilde{W}_n(x,y) = \frac{1}{\mu_{x, y}}
	\int_{I_{J_n(x)}}\int_{I_{J_n(y)}} W(u,v) \, d\mu_n(v) \, d\mu_n(u),
\end{equation}
with
\begin{align*}
	\mu_{x, y} = \int_{I_{J_n(x)}} \int_{I_{J_n(y)}} \, d\mu_n(v) \, d\mu_n(u),
\end{align*}
the measure of the box to which $(x, y)$ belongs.

The first step in our proof is to investigate how well $\widetilde{W}_n$ approximates $W$. More
specifically, we want to understand how the difference $|\sigma[W, \mu_n] - \sigma[\widetilde{W}_n, \mu_n]|$
scales, depending on the specific partition. Note that for any partition $\rho_t$ of $A_n$ into $m_n$ intervals we have that
\begin{align*}
	\mathcal{S}[M] = - \sum_{t = 1}^{m_n} \Prob{M = t}\log\left(\Prob{M = t}\right) \le \log(m_n),
\end{align*}
where the upper bound is achieved on the partition which is uniform according to measure $\mu_n$.
Since
\begin{align*}
	\frac{n}{\log(n)} \frac{n \mathcal{S}[M]}{\binom{n}{2}} = \frac{2\mathcal{S}[M]}{\log(n)(1 - 1/n)},
\end{align*}
it is enough to find a partition $\rho_t$, with $\log(m_n) = o(\log(n))$, such that
\begin{equation}\label{eq:def_convergence_scaled_Gibbs_graphon_entropy_full}
	\lim_{n \to \infty} \frac{n}{\log(n)}|\sigma[W, \mu_n] - \sigma[\widetilde{W}_n, \mu_n]| = 0,
\end{equation}
This then proves Theorem \ref{thm:convergence_graph_entropy}, since
\begin{align*}
	\lim_{n \to \infty} \frac{n}{\log(n)}\left|\mathcal{S}^\ast[G_n] - \sigma[G_n]\right|
	&= \lim_{n \to \infty} \frac{n}{\log(n)} \left(\mathcal{S}^\ast[G_n] - \sigma[G_n]\right) \\
	&\le \lim_{n \to \infty} \left(\frac{n}{\log(n)} \frac{n \mathcal{S}[M]}{\binom{n}{2}} 
		+ \frac{n}{\log(n)}|\sigma[W, \mu_n] - \sigma[\widetilde{W}_n, \mu_n]|\right) \\
	&\le \lim_{n \to \infty} \left(\frac{2\log(m_n)}{\log(n)(1 - 1/n)}
		+ \frac{n}{\log(n)}|\sigma[W, \mu_n] - \sigma[\widetilde{W}_n, \mu_n]|\right) = 0. 
\end{align*}

\subsubsection{Constructing the partition}

We will take $I_1 = (-\infty, -R_n]$ and partition the remaining interval $[-R_n, R_n]$ into $\log(n)^2$ equal parts. To this end, let $n$ be sufficiently large so that $n \ge \nu \beta^2$, take 
$m_n = \lceil \log(n)^2 \rceil + 1$, and define the partition $\rho_t$ by
\begin{align*}
	\rho_0 = -\infty, \quad \rho_1 = -R_n, \quad \text{and} \quad \rho_{t} = \rho_{t-1} + \frac{2R_n}{m_n - 1} \quad \text{for all } t= 2, \dots, m_n.
\end{align*}

Note that $\log(m_n) = O(\log\log n) = o(\log(n)$ so that all that is left, is to prove \eqref{eq:def_convergence_scaled_Gibbs_graphon_entropy_full}. In addition,
\[
	\frac{n}{\log(n)} \int_{-\infty}^{-R_n} \int_{-\infty}^{R_n} H(\widetilde{W}(x,y)) \, d\mu_n(y) \ d\mu_n(x) = O\left(\log(n)^{-1} n^{1-\gamma}\right),
\]
and the same holds if we replace $\widetilde{W}$ with $W$. Hence it follows that in order to establish \eqref{eq:def_convergence_scaled_Gibbs_graphon_entropy_full} we only 
need to consider the integral over the square $[-R_n, R_n] \times [-R_n, R_n]$. That is, we need to show
\begin{equation}\label{eq:def_convergence_scaled_Gibbs_graphon_entropy}
	\lim_{n \to \infty} \frac{n}{\log(n)}\left|\iint_{-R_n}^{R_n} \left(H\left(W(x,y)\right) - H\left(\widetilde{W}(x,y)\right)\right) \, d\mu_n(y) \, d\mu_n(x)\right| = 0,
\end{equation}

For this we compare $\sigma[W,\mu_n] $ and 
$\sigma[\widetilde{W},\mu_n]$, based on the mean value theorem which states that
\begin{align*}
	\text{for any } a \leq b, \quad \exists \, c\in[a,b], \quad \text{such that } H(b)-H(a) = H'(c) \, (a-b). 
\end{align*}
Here $|H'(c)|  = |\log(c) - \log(1-c)|$, and due to the symmetry of $H$ we get, for any $0 < a \le c \le b < 1$,
\begin{equation}
\label{eq:H'minbound} |H'(c)| \leq \frac{\min\{H(a),H(b)\}}{\min\{a,b,1-a,1-b\}}.
\end{equation}

Note that
\begin{equation}\label{eq:partition_bounds_W_tilde_W}
	0 < \min_{u \in I_{M_n(x)}, \, v \in I_{M_n(y)}} W(u,v) \le \widetilde{W}_n(x,y)
	\le \max_{u \in I_{M_n(x)}, \, v \in I_{M_n(y)}} W(u,v) < 1,
\end{equation}
for all $x, y \in [-R_n, R_n]$. In addition, for all $x, y \in [-R_n, R_n]$ and $(u,v) \in I_{M_n(x)} \times I_{M_n(y)}$ we have $|x+y-u-v| \leq 2R_n/m_n \leq 2/\log(n)$
and thus $|1-\exp(x+y-u-v)| \leq 3|x+y-u-v|$ by the mean value theorem. Therefore we have
\begin{align*}
	\left|W(x,y) - W(u,v)\right| 
	&= \frac{\left|e^{u + v} - e^{x + y}\right|}{(1 + e^{x + y})(1 + e^{u +v})} 
	\le  \min \left\{ \frac{|e^{u+v-x-y}-1|}{1+e^{u+v}}, \frac{|1-e^{x+y-u-v}|}{1+e^{u+v}}\right\}\\
	&\leq 3|u+v-x-y| \min\{W(u,v), 1-W(u,v)\}, 
\end{align*}
By symmetry we obtain a similar upper bound with $W(x,y)$ instead of $W(u,v)$ and hence we conclude
\begin{equation}\label{eq:Wdelta}
	\left|W(x,y) - W(u,v)\right| \leq 3|u+v-x-y| \min\{W(x,y),1-W(x,y),W(u,v), 1-W(u,v)\}.
\end{equation}

Then, for any $x+y\leq u+v$ there are $c,d$ such that $x+y \leq c+d \leq u+v$, so by \eqref{eq:H'minbound} and \eqref{eq:Wdelta} we get
\begin{align*}
	|H(W(x,y)) - H(W(u,v))| &= |H'(W(c,d))(W(x,y)-W(u,v))| \\
	&\le  \frac{\min\{H(W(x,y)),H(W(u,v))\} \, |W(x,y)-W(u,v)|}{\min\{W(x,y),1-W(x,y),W(u,v), 1-W(u,v)\}}  \\
	&\le 3|x+y-u-v| \cdot \min\{H(W(x,y)),H(W(u,v))\}. \numberthis \label{eq:diff_H_bound}
\end{align*}

Next, for the partition $\rho_t$ we have
\begin{align*}
	|I_t| = \frac{2 R_n}{m_n - 1} \le \frac{2 R_n}{\log(n)^2} = \frac{\log(n) - \log(\nu \beta^2)}{\log(n)^2} \le \frac{1}{\log(n)},
\end{align*} 
for $t \ge 2$. In addition, \eqref{eq:partition_bounds_W_tilde_W} implies that for $(x, y) \in I_t \times I_s$, with $t, s \ge 2$,
\begin{align*}
	W(\rho_{t-1}, \rho_{s - 1}) \le \widetilde{W}_n(x,y), \, W(x,y) \le W(\rho_t, \rho_s),
\end{align*}
and thus there is a pair $x_{s,t}, y_{s,t} \in I_s \times I_t$ such that $\widetilde{W}_n(x,y) = W(x_{s,t},y_{s,t})$ on $I_s \times I_t$. 
Therefore, using \eqref{eq:diff_H_bound}, we get
\begin{align*}
	|H(W(x,y)) - H(\widetilde{W}_n(x,y))| &= |H(W(x,y)) - H(W(x_{s,t},y_{s,t}))| \\
	&\le 3|x+y-x_{s,t}-y_{s,t}| H(W(x,y))  \\ 
	&\le 3(|I_t| + |I_s|) H(W(x,y)) \le \frac{6}{\log(n)} H(W(x,y))
\end{align*}

Finally, integrating the above equation over the whole square $[-R_n,R_n] \times [-R_n, R_n]$ we obtain
\begin{align*}
	\left|\iint_{-R_n}^{R_n} \left(H\left(W(x,y)\right) - H\left(\widetilde{W}(x,y)\right)\right) \, d\mu_n(y) \, d\mu_n(x)\right| \leq \frac{6}{\log(n)} \sigma[W,\mu_n],
\end{align*}
which proves \eqref{eq:def_convergence_scaled_Gibbs_graphon_entropy} since
\begin{align*} 
	\lim_{n \to \infty} \frac{n}{\log(n)}|\sigma[W, \mu_n] - \sigma[\widetilde{W}_n, \mu_n]| 
	\leq \lim_{n\to \infty} \frac{6 n}{(\log n)^2}\sigma[W,\mu_n] = 0
\end{align*}
thanks to Theorem~\ref{thm:convergence_graphon_entropy}.

\paragraph{Acknowledgments}
This work was supported by the ARO grant No.\ W911NF-16-1-0391 and by the NSF grant No.\ CNS-1442999.  

\bibliographystyle{plain}

\end{document}